\newtheorem*{rep@theorem}{\rep@title}
\newcommand{\newreptheorem}[2]{%
\newenvironment{rep#1}[1]{%
 \def\rep@title{#2 \ref{##1}}%
 \begin{rep@theorem}}%
 {\end{rep@theorem}}}
\newtheorem{theorem}{Theorem}
\newtheorem{lemma}[theorem]{Lemma}
\newtheorem{remark}[theorem]{Remark}
\newtheorem{definition}[theorem]{Definition}
\newtheorem{proposition}[theorem]{Proposition}
\newtheorem{corollary}[theorem]{Corollary}
\DeclareMathOperator{\id}{id}
\title{Cutoff for the cyclic adjacent transposition shuffle}
\author{Danny Nam\thanks{Department of Mathematics, Princeton University, USA.} \and Evita Nestoridi\footnotemark[1]}
\date{}
\begin{document}
\maketitle
\begin{abstract}
We study the cyclic adjacent transposition (CAT) shuffle of $n$ cards, which is a systematic scan version of the random adjacent transposition (AT) card  shuffle. 
In this paper, we prove that the CAT shuffle exhibits cutoff at $\frac{n^3}{2 \pi^2} \log n$, which concludes that it is twice as fast as the AT shuffle.
\end{abstract}

\section{Introduction}
How long does it take to shuffle a deck of cards sufficiently well? Mixing time of card shuffling schemes and Markov chains in general is a widely studied subject in probability. Recently, there has been a lot of interest in understanding the behavior of time-inhomogeneous chains and in sharpening the techniques that have been developed in the time-homogeneous case (see \cite{BCSV, MNP, MPS, Pinsky, SZ1, SZ2, SZ3, SZ4}). In the present paper, we study the mixing time of the cyclic adjacent transposition shuffle and show that it exhibits cutoff, which is the first verification of cutoff phenomenon for a time-inhomogeneous card shuffle.


The cyclic adjacent transposition (CAT) shuffle is a systematic scan version of the adjacent transposition shuffle. In the CAT shuffle, we start with a deck of $n$ cards, that are placed on the vertices on a path of length $(n-1)$. At the beginning of the first step, we flip a fair coin, which determines if we are going to move from left to right or from right to left. If we do the former, then at time $t=1$ with probability $1/2$ we transpose the cards at the ends of the first edge, otherwise we stay fixed. For $t=2,\ldots n-1$, with probability $1/2$ we transpose cards at the ends of the $t$-th edge, otherwise we stay fixed, etc.  If we move from right to left, at time $t=1,\ldots n-1$, with probability $1/2$ we transpose the cards that lie on the ends of the $(n-t)$-th edge, otherwise we do nothing. 

In other words, we explore the deck from the first card to the last card with respect to the direction we choose at the beginning, and independently at each step either swap the positions of the neighboring ones or stay fixed according to a fair coin toss. When $t \equiv 1 \bmod (n-1)$, we repeat the first $(n-1)$ steps of the chain independently, i.e., pick the orientation (either from $1$ to $n$ or from $n$ to $1$) uniformly at random, move from the first card to the last one according to the chosen direction, and at each step either transpose or do nothing uniformly independently at random. 

The configuration space of the CAT shuffle is the symmetric group $S_n$. Let $x,y \in S_n$ and let $P^t_{x}(y)$ be the probability of moving from the $x$ to $y$ in $t$ steps. Then the basic limit theorem of Markov chains tells us that $P^t_{x}$ converges to the uniform measure $\mu$ as $t \rightarrow \infty$ with respect to the total variation distance
\begin{equation*}
d_x(t):=\|P^t_{x}-\mu \|_{T.V.} := \frac{1}{2} \sum_{y \in S_n}|P^t_{x}(y) - \mu(y)|.
\end{equation*}
The mixing time of this Markov chain is defined as
\begin{equation*}
t_{mix}(\varepsilon)= \min \{t \in \mathbb{N}: \max_{x \in S_n}\{ d_x(t) \}\leq \varepsilon\}.
\end{equation*}
Our main result provides sharp bounds for the mixing time of the CAT shuffle.
\vspace{1mm}

\begin{theorem}\label{main}
For the cyclic adjacent transposition shuffle, we have that for any $\varepsilon >0$,
\begin{enumerate}
\item[\textnormal{(a)}] There is a universal constant $c$, such that
$t_{mix} (1- \varepsilon) \geq \frac{n^3}{2 \pi^2} \log n - \frac{n^3}{2 \pi^2} \log \left(\frac{c \log n}{\varepsilon} \right).$
\item[\textnormal{(b)}] $t_{mix}(\varepsilon) \leq (1+o(1)) \frac{n^3}{2\pi^2} \log n.$
\end{enumerate}
\end{theorem}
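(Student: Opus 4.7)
The plan is to prove the lower bound (a) by a Wilson-type second-moment argument and the upper bound (b) by a height-function reduction to a one-dimensional spectral problem; in both cases the quantitative input is an explicit spectral analysis of the single-card kernel corresponding to one full scan (left-to-right or right-to-left).

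For part (a), I would take the test statistic
\[
\Phi(\sigma)=\sum_{j=1}^n \cos\!\Big(\tfrac{\pi(2j-1)}{2n}\Big)\,\cos\!\Big(\tfrac{\pi(2\sigma(j)-1)}{2n}\Big),
\]
the natural analogue of Wilson's eigenfunction for the AT shuffle. A direct computation that tracks the position of a tagged card through the $n-1$ edges of a single scan shows that the function $j\mapsto \cos(\pi(2j-1)/(2n))$ is an eigenfunction of the scan kernel up to $O(n^{-4})$ error, with eigenvalue $1-\pi^2/n^2$; the $i\leftrightarrow n-i$ symmetry produces the same leading eigenvalue for the reverse scan, so the random choice of orientation does not spoil this. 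Consequently $\expect{\Phi(\sigma_t)\mid \sigma_0=\id}$ contracts like $(1-\pi^2/n^2)^{t/(n-1)}$, which at $t=\frac{n^3}{2\pi^2}\log n - \frac{n^3}{2\pi^2}\log(c\log n/\varepsilon)$ is still of order $\sqrt{c\log n/\varepsilon}$. Combined with the standard estimate $\mathrm{Var}_\mu(\Phi)=O(1)$ and a parallel spectral bound on $\mathrm{Var}(\Phi(\sigma_t))$ derived from a two-card analogue of $\Phi$, Chebyshev's inequality yields the required lower bound.

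For part (b), I would use the interchange-process representation and follow Lacoin's height-function reduction. Coupling all copies of the chain through the same scan directions and coin flips, for every level $\ell\in\{1,\dots,n-1\}$ the random variable $h_\ell(\sigma):=\#\{j\le\ell:\sigma^{-1}(j)\le\ell\}$ is a Markov chain on $\{0,\dots,\min(\ell,n-\ell)\}$ whose per-scan spectral gap inherits the same leading value $\pi^2/n^2$ from the single-card analysis of part (a). The standard $L^2$ argument for birth-and-death chains applied to each $h_\ell$, combined with a union bound over the $O(n)$ levels (which produces the $\log n$ factor) and the conversion from scans to steps by a factor of $n-1$, yields the upper bound $(1+o(1))\frac{n^3}{2\pi^2}\log n$.

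The main technical obstacle will be the spectral analysis of the per-scan kernel, which is an ordered product of non-commuting ``swap or stay'' operators and is not self-adjoint, so it cannot be diagonalized as cleanly as the AT generator. I plan to verify the cosine eigenfunction ansatz by a direct induction on the edges processed during the scan, and to promote the single-card spectral gap to the height-function chains via the monotone coupling inherent in the exclusion representation. Once this per-scan spectral picture is in hand, both halves of the theorem reduce to well-developed second-moment and $L^2$ arguments from the random shuffle literature.
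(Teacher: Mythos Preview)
Your Wilson-type framework is right, and the paper's route is essentially the same: an approximate eigenfunction plus a generalized Wilson lemma. But two of your quantitative inputs are off and would, if used, give a lower bound of $\frac{n^3}{\pi^2}\log n$ --- twice the correct value, contradicting part (b).

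First, $\mathrm{Var}_\mu(\Phi)$ is not $O(1)$. For $\Phi(\sigma)=\sum_j f(j)f(\sigma(j))$ with $f(j)=\cos(\pi(2j-1)/2n)$ and $\sigma$ uniform, a direct computation gives $\mathrm{Var}_\mu(\Phi)=\tfrac12\|f\|_2^2(1+O(n^{-1}))=\tfrac n4+O(1)$. Since $\Phi(\id)=\|f\|_2^2=n/2$, the ratio $\Phi(\id)^2/\mathrm{Var}_\mu(\Phi)$ is $\Theta(n)$, not $\Theta(n^2)$; this is exactly what produces the factor $\tfrac12$ in $\frac{n^3}{2\pi^2}\log n$. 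Second, the per-scan kernel does not preserve the Neumann cosine to $O(n^{-4})$: computing $Pf(1)-\lambda f(1)$ with the boundary jump law gives a term of size $\Theta(n^{-2})$. That is still good enough (you need $\delta\|\Phi\|_\infty\lesssim R/\gamma$), but the claimed $n^{-4}$ is false. The paper sidesteps both issues by taking the height-function statistic $\Phi_t=\sum_x h_t(x)\sin(\pi x/n)$, which has $\Phi_0\asymp n^2$, exact Dirichlet boundary $h_t(0)=h_t(n)=0$, an $O(n^{-1})$ eigen-error, and a second-moment bound $\mathbb{E}[(\Delta\Phi_t)^2\mid\mathcal{F}_t]\le Cn\log n$ obtained by a careful correlation estimate between cards more than $4\log n$ apart.

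\textbf{Part (b).} Here there is a genuine gap. The scalar $h_\ell(\sigma)=\#\{j\le\ell:\sigma^{-1}(j)\le\ell\}$ is \emph{not} a Markov chain: it is a one-dimensional projection of the $\ell$-particle exclusion process, and its increment over a scan depends on the full particle configuration, not just on $h_\ell$. So there is no birth--death chain to which an $L^2$ argument applies, and even if each exclusion projection were shown to mix, a union bound over $\ell$ does not by itself control $\|P_{\id}^t-\mu\|_{T.V.}$ on $S_n$.

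What the paper actually does is Lacoin's full machinery: a monotone grand coupling with respect to the order $\sigma\ge\sigma'\iff\widetilde\sigma(x,y)\ge\widetilde\sigma'(x,y)$ for all $x,y$, the Peres--Winkler censoring inequality, and a three-phase censoring scheme that first mixes cards inside blocks of width $\eta n$, then lets blocks interact, then remixes within blocks. The quantitative heart is the decay estimate
\[
|\mathbb{E}[\widetilde\sigma_t(x,y)]|\le n\exp\!\Big(-(1-\delta)\tfrac{\pi^2}{n^3}t\Big),
\]
and this is \emph{not} obtained spectrally. The paper rewrites the recursion for the discrete gradient $d_s(x)=\mathbb{E}[\widetilde\sigma_{s\Delta}(x,y)-\widetilde\sigma_{s\Delta}(x-1,y)]$ as the transition of a long-range random walk killed on exiting $[1,n]$, couples it via Donsker's theorem to Brownian motion on an interval, and reads off the exponent $\pi^2/n^2$ from the Brownian exit-time tail. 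Your plan to ``promote the single-card spectral gap to the height-function chains via the monotone coupling'' does not supply any of these steps; in particular the scan kernel is non-self-adjoint, so even the single-card gap is not available as a spectral statement, which is why the paper goes through the Brownian limit instead.
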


Theorem \ref{main} says that the cyclic adjacent transposition shuffle exhibits cutoff at $\frac{n^3}{2 \pi^2} \log n,$ i.e. that there is window $w_n= o(n^3 \log n)$ such that
\begin{equation*}
\lim_{k \rightarrow \infty} \lim_{n \rightarrow \infty} d \left( \frac{n^3}{2 \pi^2} \log n - kw_n\right)= 1 ~~~\mbox{ and }	~~~ \lim_{k \rightarrow \infty} \lim_{n \rightarrow \infty} d \left( \frac{n^3}{2\pi^2} \log n +kw_n\right)= 0.
\end{equation*}

\vspace{1mm}

As mentioned above, the CAT shuffle is a systematic scan version of the adjacent transposition (AT) shuffle. In the AT shuffle, with probability $1/2$ we transpose a random adjacent pair of cards and otherwise do nothing. It is an important card shuffling model mainly because of its connection to the exclusion process. Only recently, Lacoin \cite{Lacoin} proved the sharp upper bound for the mixing time of the AT shuffle, which combined with the sharp lower bound of Wilson \cite{Wilson} concluded the proof of cutoff for this model. They also established the same result for the simple exclusion process, verifying the close connections between the two models.

The first time-inhomogeneous card shuffle to be studied is the semi-random transposition card shuffle, which suggests that at time $t$ we transpose the card in position $t \bmod n$ with a uniformly random card. It was introduced by Thorp \cite{Thorp}, and Aldous and Diaconis \cite{A-D} first raised the question of determining the mixing time of semi-random transpositions. Mironov \cite{Mironov} used this model for a cryptographic system and proved that the mixing time is at most $O(n \log n)$. Mossel, Peres and Sinclair \cite{MPS} established a matching lower bound of order $\Theta(n \log n)$. This lower bound was obtained using Wilson's method \cite{Wilson}, which relies on finding an appropriate eigenfunction.

Another well-studied time-inhomogeneous card shuffle  is the card-cyclic-to-random shuffle. In this model, at time $t$ we remove the card with the label $t \bmod n$ and insert it to a uniformly random position of the deck. This model was introduced by Pinsky \cite{Pinsky}, who showed that $n$ steps are not sufficient to shuffle the deck well enough. Morris, Ning, Peres \cite{MNP} later proved both a lower and  an upper bound of order $n \log n$.

Saloff-Coste and Zuniga \cite{SZ1, SZ2, SZ3, SZ4} studied time-inhomogeneous Markov chains via singular value decomposition. In their work, they find better constants for the upper bound for both semi-random transpositions and card-cyclic-to-random shuffles. Their result is based on bounding the singular values of the transition matrix of the time-inhomogeneous chains by the eigenvalues of the time-homogeneous card shuffles. Although very useful in some models, their technique  does not work well enough in our case.

Very recently, Angel and Holroyd \cite{AH} asked a different question concerning a similar model; given a sequence of parameters $S = (a_i, b_i, p_i)_{i=1}^{\ell}$, at time $t=1, \ldots, \ell$ with probability $p_t$ they transpose card $a_t$ with the card $b_t$, otherwise do nothing. They study the question of finding the minimum length $\ell$ such that the resulting permutation of $n$ cards is random. They prove that the for the case that $b_i=a_i+1$, this minimum length is exactly ${n \choose 2}$.

Another model one can consider is the \textit{single-directional CAT shuffle}, which at time $t$ swaps the cards at positions $t$, $t+1$ mod $n-1$. In other words, it is a variant of the CAT shuffle that explores the deck in a single direction rather than renewing it at every $n-1$ steps. In this model, we have the same upper bound on the mixing time as part (b) of Theorem 1, and indeed the proof  works analogously for this case. However, the techniques used to prove part (a) no longer applies to this model due to lack of symmetry. In the CAT shuffle, setting a random direction of exploartion at every $n-1$ steps provides some amount of symmetry which makes it more convenient to carry out our approach. We conjecture that the single-directional CAT shuffle exhibits cutoff at $\frac{n^3}{2\pi^2}\log n$, the same location as the CAT shuffle.

\subsection{Proof outline}

The main difficulty of studying the CAT shuffle comes from its deterministic selections of update locations. Due to this aspect, it seems impossible to write down the closed formula of the transition using eigenvalues and eigenfunctions, although most of the properties of the AT shuffle can be deduced by this approach \cite{Wilson, Lacoin}. To overcome this difficulty, we rely on the following observations:
\begin{enumerate}
	\item [(\romannumeral 1)] We can compute ``approximate eigenfunctions'', which behave like the actual eigenfunctions but with errors.
	
	\item [(\romannumeral 2)] When $n$ is large enough, each card follows a Brownian-type move under an appropriate scaling of $n$ and $t$.
\end{enumerate}

To prove the lower bound on the mixing time, we derive a generalized version of  Wilson's lemma \cite{Wilson} which enables to implement the ``approximate eigenfunctions'' obtained from  observation (\romannumeral 1). Using this lemma, we conclude the first part of Theorem  \ref{main} by showing that the errors of the approximate eigenfunctions are small enough.

For the upper bound, we rely on the idea of monotone coupling and censoring from Lacoin \cite{Lacoin}; by defining the ``height'' of card decks, we can construct a monotone coupling of the system and take advantage of the censoring inequality following the approach of \cite{Lacoin}. 

In this procedure, a crucial ingredient we need is that the height of the deck decays exponentially in time according to the correct rate. In the AT shuffle \cite{Lacoin}, this property is derived based on the algebraic relations of the model. Since this approach seems impossible for the CAT shuffle, we take account of (\romannumeral 2) to deduce such condition.

\subsection{Organization of the paper}
In \S \ref{Wilson's}, we derive a generalized Wilson's lemma that works for approximate eigenfunctions. Then in \S \ref{subsecmoment}, we introduce the appropriate approximator to study and show that the error is small enough to deduce the correct lower bound. Based on this result, we conclude the proof of part (a) of Theorem \ref{main} in \S \ref{subsecmain1pf}.

Section \ref{seconec} is devoted to understanding the movement of a single card. Here, we explain the precise meaning of  observation (\romannumeral 2) above and deduce hitting time estimates of a single card. The monotone coupling, the censoring inequality and the exponential decay of the ``height'' are explained in \S \ref{subsecmono}, and we prove part (b) of Theorem \ref{main} in \S \ref{subsecpfmain2}. 

In the final section \S \ref{secex},  as an application of our main theorem we study the systematic simple exclusion process which is the particle system version of the CAT shuffle. 
 
\subsection*{Acknowledgements}
We are grateful to Yuval Peres and Allan Sly for their helpful comments.
\section{Generalizing Wilson's lemma}\label{Wilson's}
For the lower bound, we will need a generalization of Wilson's lemma \cite{Wilson}. The main difference is that we do not use the precise eigenfunctions of the transition matrix $P$, but rather functions that behave sufficiently like eigenfunctions.
\begin{lemma}\label{W}
Let $X_t$ be a Markov chain on a state space $\Omega_n$, with stationary distribution $\mu$. Let $x_0\in \Omega$. Suppose that there are parameters $\gamma, \delta, R>0$ and a function $\Phi :\Omega_n \rightarrow \mathbb{R}$ such that $\Phi(x_0)>0$, satisfying the following:
\begin{enumerate}
\item[\textnormal{(a)}] The mean of $\Phi$ under stationarity is zero, that is $\mu(\Phi)=0$. 
\item[\textnormal{(b)}] We have $ 0< \gamma < 2-\sqrt{2}$ and for all $t \geq 0 $ it holds that
$$|\mathbb{E} [\Phi(X_{t+1}) \vert X_{t}] - (1-  \gamma)\Phi( X_{t})| \leq \delta.$$  
\item[\textnormal{(c)}] We also have that  $\mathbb{E}[(\Delta \Phi_t)^2 \vert X_{t}] \leq R$,
where $\Delta \Phi_t := \Phi(X_{t+1})- \Phi(X_t)$.
\end{enumerate}
Then for $t=\frac{1}{ \gamma_\star}\log (\Phi(x_0))- \frac{1}{2\gamma_\star}\log \left( \frac{48(\delta \Vert\Phi\Vert_\infty +R)}{\gamma \varepsilon} \right)$, we have
$$\Vert P^t_{x_0} -\mu\Vert_{T.V.} \geq 1-\varepsilon,$$
where $\gamma_\star := -\log (1-\gamma)$.
\end{lemma}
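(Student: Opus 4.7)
The plan is to adapt Wilson's classical second-moment / Chebyshev argument to the approximate-eigenfunction setting, carefully tracking how the $\delta$-error in the approximate eigenrelation propagates through the mean and variance computations. Writing $m_t := \mathbb{E}_{x_0}[\Phi(X_t)]$ and conditioning, (b) gives $|m_{t+1}-(1-\gamma)m_t|\le \delta$, so by induction $|m_t-(1-\gamma)^t\Phi(x_0)|\le \delta\sum_{s=0}^{t-1}(1-\gamma)^s \le \delta/\gamma$. Under $P^t_{x_0}$ the mean of $\Phi$ therefore stays $\delta/\gamma$-close to the geometric decay $(1-\gamma)^t\Phi(x_0)$, whereas $\mathbb{E}_\mu[\Phi]=0$ by (a). This is the separation that I will amplify by a second-moment argument.

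Next I would control the variances. Decompose one step as $\Phi(X_{t+1})=(1-\gamma)\Phi(X_t)+\eta_t+Z_t$, where $\eta_t$ is $X_t$-measurable with $|\eta_t|\le \delta$ (by (b)) and $Z_t$ is mean-zero given $X_t$ with $\mathbb{E}[Z_t^2\mid X_t]\le R$ (by (c), since $Z_t$ is the centred version of $\Delta\Phi_t$). Setting $W_t := \Phi(X_t)-(1-\gamma)^t\Phi(x_0)$, we have $W_{t+1}=(1-\gamma)W_t+\eta_t+Z_t$; squaring, conditioning on $X_t$, and using $|W_t|\le 2\Vert\Phi\Vert_\infty$ to linearise the cross-term, one obtains
\[
\mathbb{E}[W_{t+1}^2\mid X_t]\le (1-\gamma)^2 W_t^2 + C\bigl(R+\delta\Vert\Phi\Vert_\infty\bigr).
\]
Since $W_0=0$, iterating and summing the geometric series yields $\mathrm{Var}_{x_0}(\Phi(X_t))\le \mathbb{E}_{x_0}[W_t^2]\le C(R+\delta\Vert\Phi\Vert_\infty)/(\gamma(2-\gamma))$. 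An entirely analogous computation under $\mu$, now using stationarity $\mathbb{E}_\mu[\Phi^2(X_{t+1})]=\mathbb{E}_\mu[\Phi^2(X_t)]$ together with (a), produces $\mathrm{Var}_\mu(\Phi)\le C'(R+\delta\Vert\Phi\Vert_\infty)/(\gamma(2-\gamma))$ of the same order of magnitude.

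The proof then closes by Chebyshev on the event $A:=\{\Phi\ge L/2\}$ with threshold $L := (1-\gamma)^t\Phi(x_0)-\delta/\gamma$. Since $\mathbb{E}_{x_0}[\Phi(X_t)]\ge L$, the variance bound gives $P^t_{x_0}(A^c)\le 4\mathrm{Var}_{x_0}(\Phi(X_t))/L^2$; and since $\mathbb{E}_\mu[\Phi]=0$, $\mu(A)\le 4\mathrm{Var}_\mu(\Phi)/L^2$. Combining,
\[
\Vert P^t_{x_0}-\mu\Vert_{T.V.}\ge P^t_{x_0}(A)-\mu(A) \ge 1 - \frac{c\bigl(R+\delta\Vert\Phi\Vert_\infty\bigr)}{\gamma L^2}.
\]
The time $t$ in the statement is calibrated precisely so that $((1-\gamma)^t\Phi(x_0))^2 = 48(R+\delta\Vert\Phi\Vert_\infty)/(\gamma\varepsilon)$, and the hypothesis $\gamma<2-\sqrt{2}$ provides the margin needed to keep $L$ a definite fraction of $(1-\gamma)^t\Phi(x_0)$ after subtracting $\delta/\gamma$, so that the right-hand side is at least $1-\varepsilon$.

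The main obstacle is the bookkeeping of the $\delta$-errors in the second-moment recursion: in Wilson's classical setting one has $\eta_t\equiv 0$ and the recursion contracts cleanly by $(1-\gamma)^2$, whereas here one must verify that the cross-term $W_t\eta_t$ contributes only an additive $O(\delta\Vert\Phi\Vert_\infty)$ to the effective noise and does not couple with $W_t^2$ in a way that destroys the contraction. Once this is handled, both $\mathrm{Var}_{x_0}(\Phi(X_t))$ and $\mathrm{Var}_\mu(\Phi)$ end up of the same order as in the exact-eigenfunction case, and the rest of the argument is a standard Chebyshev comparison.
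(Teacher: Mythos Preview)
Your proof is correct and follows essentially the same route as the paper: iterate (b) for the mean, run a second-moment recursion in which the $\delta$-error contributes only an additive $O(\delta\Vert\Phi\Vert_\infty)$ term, then apply Chebyshev to a threshold event. The only cosmetic difference is that the paper bounds $\mathbb{E}[\Phi(X_t)^2]$ directly via $\mathbb{E}[\Phi(X_{t+1})^2\mid X_t]\le (1-2\gamma)\Phi(X_t)^2+(\delta\Vert\Phi\Vert_\infty+R)$ and then subtracts the squared mean (using $(1-2\gamma)^t\le (1-\gamma)^{2t}$ so the leading terms cancel), obtaining the stationary variance by sending $t\to\infty$, rather than tracking your centered quantity $W_t$ with a $(1-\gamma)^2$-contraction.
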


\begin{proof}
Let $\varepsilon >0$. By iterating the condition (b), we get that
\begin{equation}\label{ex}
\mathbb{E}_{x_0}[ \Phi(X_t)]  \geq (1- \gamma )^t\Phi(x_0) -\delta/\gamma .
 \end{equation}

\noindent To control the variance, we notice the inequality that
\begin{equation}\label{2ndmeq1}
	\begin{split}
	\mathbb{E} [(\Phi(X_{t+1}))^2 | X_t] 
	&=
	(\Phi(X_t))^2 + 2\Phi(X_t) \mathbb{E} [ \Delta \Phi_t |X_t] +
	\mathbb{E} [ (\Delta \Phi_t)^2 | X_t]\\
	&\leq
	(1-2\gamma)\Phi(X_t)^2 + (\delta \Vert \Phi \Vert_\infty + R).
		\end{split}
	\end{equation}
Iterating \eqref{2ndmeq1}, we have that 	
\begin{equation}\label{2nd1}
	\begin{split}
	\mathbb{E}_{x_0} [(\Phi(X_{t}))^2 ] 
	&\leq
	(1-2\gamma)^t\Phi(x_0)^2 + \frac{\delta \Vert \Phi \Vert_\infty+R}{2\gamma}.
	\end{split}
	\end{equation}
	
\noindent Using \eqref{ex}, this implies that 
\begin{equation}\label{var}
\begin{split}
\textnormal{Var}(\Phi(X_t) \vert X_0=x_0) \leq  
\frac{\delta \Vert \Phi \Vert_\infty +R}{2\gamma}
+\frac{2\delta \Vert \Phi \Vert_\infty }{ \gamma } ~\leq~   
\frac{3(\delta \Vert \Phi \Vert_\infty +R)}{\gamma}.
\end{split}
\end{equation}
Letting $t$ go to infinity, we also get the same bound for $\textnormal{Var}(\Phi )$ under the stationary distribution.

Let $t=\frac{1}{ \gamma_\star}\log (\Phi(x_0))- \frac{1}{2\gamma_\star}\log \left( \frac{48(\delta \Vert\Phi\Vert_\infty +R)}{\gamma \varepsilon} \right)$  and consider the event  
$$A=\left\{ x \in \Omega_n :  \Phi(x) <  \frac{1}{2}\mathbb{E}_{x_0} [\Phi(X_t)] \right\}.$$  
Then by Chebychev's inequality combined with \eqref{ex} and \eqref{var}, we have that 
\begin{equation}\label{tr}
\mathbb{P}_{x_0}\left( X_t \in A \right) \leq 
\frac{12(\delta \Vert \Phi \Vert_\infty +R) / \gamma}{(1-\gamma)^{2t} \Phi(x_0)^2 - 2\delta \Vert \Phi \Vert_\infty/\gamma } \leq \frac{\varepsilon}{2}.
\end{equation}
Similarly with respect to the stationary measure, we obtain that
\begin{equation}\label{un}
\mathbb{P}_\mu (X \in A ) \geq 1-\frac{\varepsilon}{2}.
\end{equation}
Combining \eqref{tr} and \eqref{un}, we deduce that
 
$$\Vert P^t_{x_0} -\mu\Vert_{T.V.}\geq | P^t_{x_0}(A) -\mu(A)| \geq 1-\varepsilon.$$
\end{proof}

\section{The lower bound}\label{seclowerbd}

In \cite{Wilson}, the lower bound on the mixing time for the random AT shuffle is obtained by analyzing the \textit{height function representation} of the chain. In this case, one can compute the exact eigenvalues and eigenfunctions of the transition of height functions.

On the other hand, the main difficulty of investigating the CAT shuffle is that we cannot precisely calculate such eigenvalues and eigenfunctions since the update locations are not given randomly. However, we can still overcome this obstacle by using the objects which approximately behave like eigenfunctions with small enough errors. 

In \S \ref{subsecmoment}, we introduce the \textit{height function representation} of the CAT shuffle and describe its first and the second moment estimates, based on the aforementioned idea of ``approximate eigenfunctions.'' Then, \S \ref{subsecmain1pf} is devoted to proving Theorem \ref{main}, part (a) using the ingredients obtained in subsection \ref{subsecmoment} and Lemma \ref{W}.

\subsection{The moment estimates}\label{subsecmoment}

Let $\sigma_0:= \id \in S_n$ be the starting state of the CAT shuffle and let $\sigma_s$ denote the deck at time $s$. For each $t\in \mathbb{N}$, the \textit{height function} $h_t: [n]\rightarrow \mathbb{R}$ of $(\sigma_s)$ is defined as
\begin{equation}\label{heightdef}
h_t (x) := \sum_{z=1}^x \textbf{1}{\{\sigma_{(n-1)t}(z) \leq \lfloor n/2\rfloor  \} } - \frac{ \lfloor n/2\rfloor}{n}x.
\end{equation}
 Let $\mathcal{F}_t$ denote the sigma-algebra for the shuffling until time $(n-1)t$. Our goal in this subsection is to obtain the first and the second moment estimates on the following quantity $\Phi_t$:
\begin{equation}\label{Phidef}
\Phi_t := \sum_{x=1}^{n-1} h_t(x) \sin \left(\frac{\pi x}{n} \right).
\end{equation}

We begin with the first moment estimate of $\Phi_t$. The following lemma is proven similarly as Lemma \ref{tildebdlem}, and the proof can be found in \S \ref{subsec1stm}.

\begin{lemma}\label{1stmlem}
	Let $\Phi_t,$ $\mathcal{F}_t$ defined as above. For any $t \in \mathbb{N}$ we have
	\begin{equation}\label{1stmeq1}
	|\, \mathbb{E} [\Phi_{t+1} | \mathcal{F}_t] - (1-\gamma) \Phi_t\,| \leq  \frac{3\pi}{4n} ,
	\end{equation}
	where $\gamma := \pi^2/n^2 - O(n^{-4}).$
\end{lemma}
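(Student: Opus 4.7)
The plan is to track the expected height function through a single randomly oriented sweep. The key observation is that in expectation each single-edge update is a local averaging: it replaces $h(y)$ by $\tfrac12(h(y-1)+h(y+1))$ and leaves all other heights fixed. Composing these averagings sequentially over one sweep, then averaging over the direction and testing against the Dirichlet sine $\sin(\pi y/n)$, gives $\mathbb{E}[\Phi_{t+1}\mid\mathcal{F}_t]$ in closed form.

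First I would record the single-edge update rule. When edge $y$ is selected, $h(y)$ changes by $-\epsilon\,\xi_y$, where $\epsilon\sim\text{Ber}(1/2)$ is the swap coin and $\xi_y\in\{-1,0,1\}$ is the signed indicator comparing the types of the cards at positions $y$ and $y+1$. Using \eqref{heightdef} to write $\xi_y=2h(y)-h(y-1)-h(y+1)$ in terms of pre-update heights, the conditional expectation becomes $\mathbb{E}[h_{\text{after}}(y)\mid\text{pre-update}]=\tfrac12(h(y-1)+h(y+1))$, while all other heights are unaffected by this single update.

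Next I would propagate through a full sweep. For a left-to-right sweep, edge $y$ is updated at step $y$, and at that moment among $h(y-1),h(y),h(y+1)$ only $h(y-1)$ has already been changed. Writing $g_L(y):=\mathbb{E}[h_{t+1}(y)-h_t(y)\mid h_t,\,\text{L-R}]$ and $Lh(y):=h(y-1)+h(y+1)-2h(y)$, the previous step yields the recursion
\[
g_L(y)=\tfrac12\,Lh_t(y)+\tfrac12\,g_L(y-1),\qquad g_L(0)=0,
\]
which solves to $g_L(y)=\sum_{z=1}^{y}2^{z-y-1}Lh_t(z)$. The symmetric analysis for the right-to-left sweep gives $g_R(y)=\sum_{z=y}^{n-1}2^{y-z-1}Lh_t(z)$; averaging over the fair choice of direction produces the kernel representation
\[
\mathbb{E}[h_{t+1}(y)\mid\mathcal{F}_t]-h_t(y)=\tfrac14\,Lh_t(y)+\tfrac12\sum_{z=1}^{n-1}K(y-z)\,Lh_t(z),\qquad K(k):=2^{-|k|-1}.
\]

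Multiplying by $\sin(\pi y/n)$ and summing in $y$, the diagonal term contributes $-\tfrac12(1-\cos(\pi/n))\Phi_t$ via the Dirichlet eigenfunction identity $\sum_y Lh_t(y)\sin(\pi y/n)=-2(1-\cos(\pi/n))\Phi_t$. For the convolution term I would exchange summations; the inner sum $\sum_y K(y-z)\sin(\pi y/n)$ has the full-line value $\sin(\pi z/n)\cdot\tfrac{3/2}{5-4\cos(\pi/n)}$, obtained from the identity $\sum_{k\in\mathbb{Z}}K(k)\cos(\pi k/n)=\tfrac{3/2}{5-4\cos(\pi/n)}$. Combined with the Dirichlet identity this produces contribution $-\tfrac{3(1-\cos(\pi/n))/2}{5-4\cos(\pi/n)}\Phi_t$, and adding the two pieces gives coefficient $\tfrac{2(1-\cos(\pi/n))(2-\cos(\pi/n))}{5-4\cos(\pi/n)}=\tfrac{\pi^2}{n^2}+O(n^{-4})$ of $-\Phi_t$, identifying $\gamma$.

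The main obstacle is controlling the additive error at the advertised level $\tfrac{3\pi}{4n}$. This error comes entirely from truncating the convolution kernel $K$ from $\mathbb{Z}$ to $\{1,\dots,n-1\}$: the discarded tails are of order $2^{-\min(z,n-z)}$, but they appear only paired with $|\sin(\pi y/n)|\leq\pi\min(y,n-y)/n$ at the affected endpoints, and $|Lh_t(z)|\leq 1$ holds uniformly from the unit-jump structure of $h_t$. Summing the absolutely-convergent geometric series in $|y-z|$ while keeping careful track of the $\pi/n$ factor from $\sin$ near the boundary yields the stated explicit constant.
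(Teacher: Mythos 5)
Your argument is sound and, in substance, it is the same computation as the paper's: you derive the exact one-sweep averaging kernel for the expected heights, test it against $\sin(\pi x/n)$, read off $1-\gamma$ as the associated multiplier, and bound the boundary corrections by geometric tails. The kernel you obtain (weight $\tfrac14$ at displacement $0$, $\tfrac{5}{16}$ at $\pm1$, $2^{-(|k|+3)}$ at $|k|\ge 2$) coincides with the paper's relation (\ref{vbareq1}), which the paper derives instead by conditioning on the sweep direction and enumerating the geometric displacement probabilities of a single card; your sequential-averaging recursion $g_L(y)=\tfrac12 Lh_t(y)+\tfrac12 g_L(y-1)$ is a cleaner route to the same identity, and your closed-form multiplier $\tfrac{2(1-\cos(\pi/n))(2-\cos(\pi/n))}{5-4\cos(\pi/n)}=\tfrac{\pi^2}{n^2}+O(n^{-4})$ agrees with the paper's expansion of $\sum_{k\ge -1}\cos(\pi k/n)\,2^{-(k+3)}$.

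The one soft spot is the final constant. As you sketch it, bounding the truncation error termwise with $|Lh_t(z)|\le 1$ and $|\sin(\pi y/n)|\le \pi\,\mathrm{dist}(y,n\mathbb{Z})/n$ gives roughly
\begin{equation*}
\Bigl|\mathcal{E}\Bigr| \;\le\; \frac12\sum_{z=1}^{n-1}\frac{\pi}{n}\Bigl(2^{-z}+2^{-(n-z)}\Bigr)\;\le\;\frac{\pi}{n},
\end{equation*}
which is weaker than the stated $\tfrac{3\pi}{4n}$. To land on the advertised constant you should sum by parts in $z$ (pass back from $Lh_t$ to $h_t$) and use $|h_t(x)|\le \tfrac12\,(x\wedge(n-x))$, which is exactly what the paper's estimate (\ref{1stmeq4}) does: the two boundary tails then each contribute $\tfrac38\cdot\tfrac{\pi}{n}$, giving $\tfrac{3\pi}{4n}$. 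This is a bookkeeping refinement rather than a conceptual gap, and the precise constant is immaterial for the application in Lemma \ref{W} (any $O(1/n)$, indeed any $O(\log n)$, error suffices there), but as written your sketch does not quite reach the constant in the statement.
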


\begin{remark}
	\textnormal{Although we cannot have a more precise form such as $ \mathbb{E} [\Phi_{t+1} | \mathcal{F}_t] = (1-\gamma) \Phi_t$ as \cite{Wilson}, Lemma \ref{Wilson's} says that the estimate of Lemma \ref{1stmlem} is sufficient to get a lower bound.}
\end{remark}

 Our next goal  is to bound the second moment of $\Phi_t$. One convenient way of doing this is to look at $\Delta \Phi_t := \Phi_{t+1}- \Phi_t$,  similar to what is done in \cite{Wilson}.

\begin{lemma}\label{2ndmlem}
 There exists an absolute constant $C>0$ such that for any $t\in \mathbb{N},$
	\begin{equation*}
	\mathbb{E} [(\Delta \Phi_t)^2 | \mathcal{F}_t] \leq Cn \log n.
	\end{equation*}
\end{lemma}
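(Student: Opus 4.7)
The plan is to condition on the direction of the $(t+1)$-th scan---WLOG left-to-right, the right-to-left case admitting an analogous decomposition---and express $\xi_e := h_{t+1}(e)-h_t(e)$ explicitly in terms of the i.i.d.\ fair coins $U_1,\ldots,U_{n-1}$ driving the scan. A short induction on $e$ shows that when edge $e$ is about to be processed, the card at position $e+1$ has not yet been touched (so its low-indicator is $\eta_0(e+1)$), while the card at position $e$ is the original card from position $e-K_e$, where $K_e := \sup\{k\ge 0:U_{e-1}=\cdots=U_{e-k}=1\}$ is the length of the immediately preceding run of swaps. Consequently
\[
\xi_e = U_e\bigl(\eta_0(e+1)-\eta_0(e-K_e)\bigr)\in\{-1,0,1\},
\]
with $\eta_0(j):=\mathbf 1\{\sigma_{(n-1)t}(j)\le\lfloor n/2\rfloor\}$, so that $\Delta\Phi_t = \sum_e \sin(\pi e/n)\xi_e$.

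Setting $\mathcal G_s := \sigma(\mathcal F_t,U_1,\ldots,U_s)$, I would split $\Delta\Phi_t = A+B$ where $A := \sum_e\sin(\pi e/n)\bigl(\xi_e-\mathbb E[\xi_e\mid\mathcal G_{e-1}]\bigr)$ is a $\mathcal G$-martingale sum and $B := \tfrac12\sum_e\sin(\pi e/n)\bigl(\eta_0(e+1)-\eta_0(e-K_e)\bigr)$ is the conditional drift. Since $\operatorname{Var}(\xi_e\mid\mathcal G_{e-1}) \le \tfrac14$, orthogonality of martingale increments gives $\mathbb E[A^2\mid\mathcal F_t]\le\tfrac14\sum_e\sin^2(\pi e/n) = O(n)$. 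Lemma~\ref{1stmlem} already yields $(\mathbb E[B\mid\mathcal F_t])^2 = (\mathbb E[\Delta\Phi_t\mid\mathcal F_t])^2 = O(1)$, so the task reduces to bounding $\operatorname{Var}(B\mid\mathcal F_t)$.

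To control the variance, I would telescope $\eta_0(e+1)-\eta_0(e-K_e) = g_0(e)+\sum_{k=1}^{K_e}g_0(e-k)$ with $g_0(j):=\eta_0(j+1)-\eta_0(j)\in\{-1,0,1\}$ and swap orders of summation to obtain $B = B_0+\tfrac12\sum_j g_0(j)\tilde W_j$, where $B_0$ is $\mathcal F_t$-measurable and
\[
\tilde W_j := \sum_{m=1}^{L_j}\sin\bigl(\pi(j+m)/n\bigr),\qquad L_j := \sup\{\ell\ge 0:U_j=\cdots=U_{j+\ell-1}=1\}.
\]
The key identity is: for $j<j'$, on the event $\{L_j\ge j'-j\}$ (which has probability $2^{-(j'-j)}$) we have $\tilde W_j = c_{j,j'}+\tilde W_{j'}$ with $c_{j,j'} := \sum_{m=1}^{j'-j}\sin(\pi(j+m)/n)$ a deterministic constant, whereas on its complement $\tilde W_j$ depends only on coins $U_j,\ldots,U_{j+L_j}$, disjoint from those driving $\tilde W_{j'}$, making the two independent. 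A short calculation then yields $\operatorname{Cov}(\tilde W_j,\tilde W_{j'}\mid\mathcal F_t) = 2^{-(j'-j)}\operatorname{Var}(\tilde W_{j'}\mid\mathcal F_t)$.

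Finally, since $L_j$ is dominated by a $\mathrm{Geom}(1/2)$ variable and $|\sin|\le 1$, one has $\operatorname{Var}(\tilde W_j\mid\mathcal F_t)\le C$ uniformly in $j$. Combined with $|g_0(j)|\le 1$ and $\sum_j|g_0(j)|\le n$, the geometric decay of covariances yields $\operatorname{Var}(B\mid\mathcal F_t) = O(n)$, so $\mathbb E[(\Delta\Phi_t)^2\mid\mathcal F_t]\le 2\mathbb E[A^2\mid\mathcal F_t]+2\mathbb E[B^2\mid\mathcal F_t] = O(n)\le Cn\log n$. The principal obstacle is the covariance identity of paragraph three---obtaining it requires careful bookkeeping of the overlap between runs of ones in $(U_s)_{s\le n-1}$---while everything else reduces to standard manipulations.
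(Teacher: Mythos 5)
Your argument is correct, and it takes a genuinely different route from the paper. The paper decomposes $\Delta\Phi_t=\sum_{a\le\lfloor n/2\rfloor}\psi_t(a)$ card by card, bounds each $\mathbb{E}[\psi_t(a)^2\,|\,\mathcal{F}_t]$ by a constant using the jump law (\ref{jumpprob1})--(\ref{jumpprob2}), shows that pairs of cards at distance more than $4\log n$ are nearly conditionally independent (so their covariance is $O(1/n)$), and pays the factor $\log n$ by applying Cauchy--Schwarz to the $O(n\log n)$ nearby pairs. You instead decompose over the coins of a single sweep: the martingale part $A$ is dispatched by orthogonality of increments ($O(n)$), and the drift $B$ is rewritten, after telescoping and exchanging sums, as $B_0+\tfrac12\sum_j g_0(j)\tilde W_j$, whose covariances you compute exactly from the run-length structure; your identity $\operatorname{Cov}(\tilde W_j,\tilde W_{j'}\,|\,\mathcal{F}_t)=2^{-(j'-j)}\operatorname{Var}(\tilde W_{j'}\,|\,\mathcal{F}_t)$ does hold, since on the run event $\tilde W_j=c_{j,j'}+\tilde W_{j'}$ with that event independent of $\tilde W_{j'}$, while $\tilde W_j\mathbf{1}_{\{L_j<j'-j\}}$ is measurable in coins of index below $j'$. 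Your route buys a sharper bound $O(n)$ with no logarithmic loss (which of course implies the stated $Cn\log n$), at the cost of the run bookkeeping; the paper's card-by-card argument is cruder but needs no such structure. One small repair: the bound on $(\mathbb{E}[B\,|\,\mathcal{F}_t,\text{direction}])^2$ is not literally supplied by Lemma \ref{1stmlem}, which concerns the expectation averaged over the two sweep directions; however, the single-direction drift is also $O(1)$ by the same summation by parts you already employ, because the coefficient multiplying each $\eta_0(j)$ (equivalently, the increment of $j\mapsto\mathbb{E}[\tilde W_j]$ and of $j\mapsto\sin(\pi j/n)$) is $O(1/n)$, so the conclusion stands; likewise the harmless truncation of $\tilde W_j$ at position $n-1$ should be noted but changes nothing.
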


%
%

\noindent \textit{Proof of Lemma \ref{2ndmlem}.} ~ For each $a\in[n]$, let $q_t(a)$ denote the position of the card $a$ at time $(n-1)t$, i.e., $q_t(a) := \sigma_{(n-1)t}^{-1}(a)$. Observe that we can write $h_t(x)$ in terms of $q_t(a)$ in the following way:
\begin{equation}\label{hrep2}
h_t(x) = \sum_{a=1}^{\lfloor n/2\rfloor} \textbf{1}_{\{q_t(a) \leq x \}} - \frac{x}{n} \left\lfloor \frac{n}{2} \right\rfloor.
\end{equation}
Therefore, $\Delta \Phi_t = \Phi_{t+1} - \Phi_t$ becomes
\begin{equation*}
\Delta\Phi_t 
= \sum_{a=1}^{\lfloor n/2 \rfloor } \left\{\sum_{x=1}^{n-1} \left(\textbf{1}_{\{q_{t+1}(a) \leq x \}} -\textbf{1}_{\{q_t(a)\leq x \}} \right) \sin \left( \frac{\pi x}{n} \right)  \right\}
= \sum_{a=1}^{\lfloor n/2 \rfloor} \psi_t(a),
\end{equation*}
where we define $\psi_t(a)$ by
\begin{equation*}
\psi_t(a):= \sum_{x=q_{t+1}(a)}^{n-1} \sin \left(\frac{\pi x}{n} \right) - \sum_{x=q_t(a)}^{n-1} \sin \left(\frac{\pi x}{n} \right).
\end{equation*}

We begin with estimating $\mathbb{E} [\psi_t(a)^2 \, |\,\mathcal{F}_t]$. Let $\overset{\rightarrow}{\mathbb{E}}$ (resp. $\overset{\leftarrow}{\mathbb{E}}$) denote the conditional expectation given the event that we explore the deck from position $1$ to $n$ (resp. $n$ to $1$) over the time period of $(n-1)t+1$ to $(n-1)(t+1)$. In other words, if $\textbf{c}_t \in \{1,n\} $ is the random variable that denotes the starting position of exploration at time $(n-1)t$, then $\overset{\rightarrow}{\mathbb{E}}[\:\cdot\:|\mathcal{F}_t ] =\mathbb{E}[\:\cdot\:|\mathcal{F}_t,\:\textbf{c}_t = 1]  $. Recall that $q_{t+1}(a)-q_{t}(a)$ follows the distribution (\ref{jumpprob1}, \ref{jumpprob2}). Letting $j$ count the displacement of card $a$, we have that for $2\leq q_t(a) \leq  n-1$, 
\begin{equation}\label{psi2eq0}
\begin{split}
\overset{\rightarrow}{\mathbb{E}} [\psi_t(a)^2\, | \,\mathcal{F}_t]  
&\leq
\frac{1}{2} \sin^2 \left(\frac{\pi(q_t(a)-1)}{n} \right)
+ \sum_{k=1}^\infty \frac{1}{2^{k+2}} 
\left\{\sum_{j=0}^{k-1} \sin \left(\frac{\pi (q_t(a) + j)}{n} \right)  \right\}^2 \\
&\leq \frac{\pi^2}{2n^2} \left\{(q_t(a)-1)^2+ \sum_{k=1}^\infty \frac{1}{2^{k+1}} \left(kq_t(a) + \frac{k(k-1)}{2} \right)^2 \right\} ~\leq~ C_1, 
\end{split}
\end{equation}
for some absolute constant $C_1 >0$, using the fact that $\sin \theta \leq \theta$ and $q_t(a)\leq n$. We can conduct a similar calculation for the cases $q_t(a)=1, n$ as well as for $\overset{\leftarrow}{\mathbb{E}} [\psi_t(a)^2\,|\,\mathcal{F}_t]$ and obtain that for all $a$,
\begin{equation}\label{psi2eq}
\mathbb{E} [\psi_t(a)^2\,|\,\mathcal{F}_t] \leq C_1.
\end{equation}

We turn our attention to estimating the correlation  and show that $ |\,\mathbb{E} [\psi_t(a)\psi_t(b)\,|\,\mathcal{F}_t]\,|
=O(
\frac{1}{n})$ for $a,\:b$ which are far apart from each other. In particular, let us assume that both $q_t(a) \geq 2$ and $ q_t(a)+ 4\log n \leq q_t(b) \leq n-1$ hold true. Define $A$ to be the event that
\begin{equation*}
A:= \{q_{t+1} (a)-q_t(a) \leq 4\log n  -2 \}.
\end{equation*}
 Then, $q_{t+1}(a)$ and $q_{t+1}(b)$ are conditionally independent given $\mathcal{F}_t$ and the event
\begin{equation*}
\{\textbf{c}_t =1 \} \cap A .
\end{equation*}
Therefore, we can express $\overset{\rightarrow}{\mathbb{E}} [\psi_t(a)\psi_t(b)\,|\,\mathcal{F}_t]$ by
\begin{equation}\label{psicoreq1}
\begin{split}
\overset{\rightarrow}{\mathbb{E}} [\psi_t(a)\psi_t(b)\,|\,\mathcal{F}_t]
&=
\overset{\rightarrow}{\mathbb{P}}(A)\;
\overset{\rightarrow}{\mathbb{E}} [\psi_t(a)\,|\, A, \mathcal{F}_t]\;
\overset{\rightarrow}{\mathbb{E}} [\psi_t(b)\,|\, A, \mathcal{F}_t]\\
&~~+
\overset{\rightarrow}{\mathbb{E}} [\psi_t(a) \psi_t(b)
\textbf{1}_{A^c} \,|\,  \mathcal{F}_t].
\end{split}
\end{equation}
Since $\mathbb{P}(A^c)\leq n^{-4}$, H$\ddot{\textnormal{o}}$lder's inequality implies that
\begin{equation}\label{psicoreq2}
\overset{\rightarrow}{\mathbb{E}} [\psi_t(a) \psi_t(b)
\textbf{1}_{A^c} \,|\,  \mathcal{F}_t]
~\leq~
\overset{\rightarrow}{\mathbb{E}} [\psi_t(a)^4\, | \,\mathcal{F}_t]^{\frac{1}{4}}\;
\overset{\rightarrow}{\mathbb{E}} [\psi_t(b)^4\, | \,\mathcal{F}_t]^{\frac{1}{4}}  \;
\overset{\rightarrow}{\mathbb{P}} (A^c)^{\frac{1}{2}}
~\leq~ \frac{C_2}{n^2},
\end{equation}
by noting that the fourth moment of $\psi_t(a)$ conditioned on $\mathcal{F}_t$ can be estimated in the same way as (\ref{psi2eq0}). On the other hand, we have
\begin{equation*}
\begin{split}
\left|\overset{\rightarrow}{\mathbb{E}}
[\psi_t(a) \textbf{1}_A\,|\,\mathcal{F}_t ] \right|
&=
\left| \frac{1}{2}\sin\left(\frac{\pi (q_t(a)-1)}{n} \right)
- \sum_{k=1}^{\lfloor 4\log n \rfloor -2} 
\frac{1}{2^{k+2}} \sum_{j=0}^{k-1} \sin \left(\frac{\pi (q_t(a)+j)}{n} \right)
 \right|\\
 &\leq
 \left| \frac{1}{2}\sin\left(\frac{\pi (q_t(a)-1)}{n} \right)
 - \sum_{k=1}^{\infty} 
 \frac{1}{2^{k+2}} \sum_{j=0}^{k-1} \sin \left(\frac{\pi (q_t(a)+j)}{n} \right)
 \right| ~+~ \frac{1}{n^3}.
\end{split}
\end{equation*}
Using $|\sin (x+\delta)-\sin(x) | \leq \delta$ to control the r.h.s., we obtain that
\begin{equation}\label{psicoreq3}
\left|\overset{\rightarrow}{\mathbb{E}}
[\psi_t(a) \textbf{1}_A\,|\,\mathcal{F}_t ] \right|
\leq
\frac{\pi}{2n} ~+~ \sum_{k=1}^\infty \frac{1}{2^{k+2}} \sum_{j=0}^{k-1} \frac{j\pi}{n} ~+~ \frac{1}{n^3} \leq \frac{C_3'}{n},
\end{equation}
for an absolute constant $C_3' >0$. Similar computations can be done for $\overset{\leftarrow}{\mathbb{E}}$. Since $\overset{\rightarrow}{\mathbb{P}}(A) \geq 1-n^{-4}$ and
$|\overset{\rightarrow}{\mathbb{E}} [\psi_t(b)\,|\,  \mathcal{F}_t]| \leq C_1$,  we deduce by combining (\ref{psicoreq1}--\ref{psicoreq3}) that
\begin{equation}\label{psicoreq}
|\,\mathbb{E} [\psi_t(a)\psi_t(b)\,|\,\mathcal{F}_t]\,|
\leq
\frac{C_3}{n},
\end{equation}
for some absolute constant $C_3 >0$.

 Let $Q \subset [\:\lfloor n/2\rfloor\: ]^2$ be defined as
\begin{equation*}
Q:=\{(a,b)\in [\:\lfloor n/2\rfloor \:]^2 : 2\leq q_t(a), q_t(b) \leq n-1, \; |q_t(a)-q_t(b)|\leq 4\log n  \}.
\end{equation*}
We also denote $Q^c:= [\,\lfloor n/2\rfloor\, ]^2 \setminus Q $. Then we can estimate $\mathbb{E}[(\Delta\Phi_t)^2|\mathcal{F}_t]$ using the inequalities (\ref{psi2eq}) and (\ref{psicoreq}) as follows.
\begin{equation*}
\begin{split}
\mathbb{E}[(\Delta\Phi_t)^2|\mathcal{F}_t]
&=
\sum_{a,b=1}^{\lfloor n/2 \rfloor} \mathbb{E} [\psi_t(a)\psi_t(b) |\mathcal{F}_t]\\
&\leq
\sum_{(a,b)\in Q^c }  \mathbb{E} [\psi_t(a)\psi_t(b) |\mathcal{F}_t]
~+
\sum_{(a,b)\in Q }  \mathbb{E} [\psi_t(a)^2 |\mathcal{F}_t]^{\frac{1}{2}} \, \mathbb{E}[ \psi_t(b)^2 |\mathcal{F}_t]^{\frac{1}{2}}\\
&\leq
\frac{n^2}{4}\cdot \frac{C_3}{n} ~+~ 4n \log n \cdot C_1
~\leq~ Cn \log n,
\end{split}
\end{equation*}
for an absolute constant $C>0$. \qed

\subsection{Proof of Theorem \ref{main}, Part (a)}\label{subsecmain1pf}

In this section, we conclude the proof of Theorem \ref{main}, part (a). Lemma \ref{1stmlem} says that 
	\begin{equation}\label{one}
	|\, \mathbb{E} [\Phi_{t+1} | \mathcal{F}_t] - (1-\gamma) \Phi_t\,| \leq  \frac{3\pi}{4n},
	\end{equation}
	where $\gamma = \pi^2/n^2 - O(n^{-4})$.
Moreover,  Lemma \ref{2ndmlem} gives us that 
\begin{equation}\label{two}
	\mathbb{E} [(\Delta \Phi_t)^2 | \mathcal{F}_t] \leq Cn \log n.
	\end{equation}
	
\noindent Also, by the definition of $\Phi_t$, when $t=0$ it satisfies that
\begin{equation}\label{starting}
\Phi_0 = \sum_{x=1}^{n-1}\frac{1}{2} \{x \wedge (n-x)\} \sin \left(\frac{\pi x}{n}\right) \geq 2\sum_{x= \frac{n}{4}}^{\frac{n}{2}} \frac{n}{4} \sin \left( \frac{\pi}{4} \right) \geq \frac{n^2}{8 \sqrt{2}}.
\end{equation}

\noindent Define $\Phi : S_n \rightarrow \mathbb{R}$ to be
$$\Phi(\sigma) = \sum_{x=1}^n h(\sigma, x) \sin \left(\frac{\pi x}{n} \right), $$
where $h(\sigma, \cdot)$ is the height function of $\sigma$ defined in (\ref{heightdef}). Note that $\Phi(\sigma_{(n-1)t}) = \Phi_t$. Plugging $\Phi$ into Lemma \ref{W}, the equations \eqref{one}, \eqref{two} and \eqref{starting} imply that
\begin{equation*}
t_{mix} (1- \varepsilon) \geq \frac{n^3}{2 \pi^2} \log n - \frac{n^3}{2 \pi^2} \log \left(\frac{c \log n}{\varepsilon} \right),
\end{equation*}
where $c>0$ is a universal constant.
\qed

\section{Following one card}\label{seconec}
Throughout this section, we label our deck of $n$-cards  by $	[n_0]:= \{0,1, \ldots , n-1 \}$. Our state space is the symmetric group on $[n_0]$, which is denoted by $S_{n_0} $. For $a \in [n_0]$, let  $q_t(a)= \sigma_{(n-1)t}^{-1}(a) $ denote the position of card $a$ at time $(n-1)t$.

Let $T_a : = \min\{t: q_t(a) = n-1 \}$ be the first (scaled) time that the card $a$ in the deck reaches at the right end.  Our goal in this subsection is to prove the following lemma on $T_a$:

\begin{lemma}\label{couptimelem}
	Let $a$ be an arbitrary element of $[n_0]$, and define $q_t(a)$, $T_a$ as above. For any CAT shuffle $(\sigma_t)$ and any  $ \delta>0 $, there exist  $N_\delta, \: \theta_\delta>0$ independent of $n$ such that for all  $n\geq N_\delta$ and $ \theta_\delta \leq \theta \leq n$, we have
	\begin{equation}\label{couptimeineq}
	\mathbb{P} \left(T_a > \frac{\theta n^2}{\pi^2} \right) \leq  (1+O(\theta n^{-2}))e^{- \frac{(1-\delta)}{4}\theta}.	
	\end{equation}
\end{lemma}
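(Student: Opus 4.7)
The plan is to exploit the fact that, sampled at the end of each full scan, the position $q_t(a)$ of card $a$ is itself a time-homogeneous Markov chain on $\{0,1,\ldots,n-1\}$. Indeed, over one scan of $n-1$ elementary steps the motion of $a$ depends only on its current position, on the uniformly random scan direction, and on the $n-1$ independent fair coin flips of that scan---not on the identities of any other cards. Unwinding the sweep yields, from any interior $i$ with random-direction scan, a symmetric law with mean $0$ and variance $2$: $a$ jumps to $i\pm 1$ with probability $5/16$, stays at $i$ with probability $1/4$, and jumps to $i\pm k$ with probability $2^{-k-3}$ for $k\geq 2$ (with the obvious boundary truncation when $i\pm k$ would fall outside $\{0,\ldots,n-1\}$).

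I would then construct an approximate principal eigenfunction of this chain killed at $n-1$. Take $\phi(q) = \cos\bigl(\alpha(q+c_0)\bigr)$, where the shift $c_0 = O(1)$ is chosen so that the reflecting wall at $q = 0$ becomes compatible with a Neumann boundary condition, and $\alpha$ is chosen so that $\phi(n-1) = 0$. Using the identities $\sum_{k\geq 1} k\cdot 2^{-k}=2$ and $\sum_{k\geq 1} k^2 \cdot 2^{-k}=6$ together with the second-order Taylor expansion of $\cos$, one verifies that for all $i \in \{0,\ldots,n-2\}$,
\[
\mathbb{E}\bigl[\phi(q_{t+1}(a)) \,\big|\, q_t(a) = i\bigr] \;=\; \lambda\,\phi(i) \;+\; O(n^{-3}), \qquad \lambda \;=\; 1 - \frac{\pi^2}{4n^2} + O(n^{-3}).
\]
The coefficient $\pi^2/4$ is exactly the leading Neumann--Dirichlet eigenvalue of the variance-$2$ heat operator on $[0,1]$, i.e.\ $(1/2)\sigma^2(\pi/2)^2$ with $\sigma^2 = 2$, which is the continuum approximation produced by observation (ii) of the introduction.

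Combining the approximate eigenfunction equation with optional stopping at $T_a\wedge t$ and $\phi(n-1)=0$ gives
\[
\mathbb{E}\bigl[\phi(q_t(a))\,\mathbf{1}_{T_a > t}\bigr] \;\leq\; \lambda^t \phi(q_0(a)) \bigl(1 + O(\theta n^{-2})\bigr)
\]
for $t = \theta n^2/\pi^2$ with $\theta \leq n$. The crude bound $\mathbb{P}(T_a > t) \leq \mathbb{E}[\phi(q_t)\mathbf{1}_{T_a>t}]/\min_{i\neq n-1}\phi(i)$ loses a factor $O(n)$, which is too much when $\theta$ is only a constant. To upgrade to a constant prefactor, I would invoke the higher approximate eigenfunctions $\phi_k(q) = \cos\bigl((2k-1)\alpha(q+c_0)\bigr)$ with eigenvalues $\lambda_k = 1 - (2k-1)^2 \pi^2/(4n^2) + O(n^{-3})$, which form an approximately orthogonal basis of $\ell^2(\{0,\ldots,n-2\})$. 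Expanding both $\mathbf{1}$ and $\delta_{q_0(a)}$ in this basis and using the rapid decay
\[
\sum_{k\geq 2} e^{-(2k-1)^2 \theta/4} \;=\; O\bigl(e^{-9\theta/4}\bigr)
\]
for $\theta$ bounded below shows the spectral sum is dominated by the $k=1$ mode, and produces $\mathbb{P}(T_a > t) \leq C_0 \bigl(1 + O(\theta n^{-2})\bigr) e^{-\theta/4}$ with an absolute constant $C_0$. Choosing $\theta_\delta$ so that $e^{\delta \theta_\delta/4} \geq C_0$ then absorbs $C_0$ into the slack $e^{\delta\theta/4}$ for every $\theta \geq \theta_\delta$, yielding the claimed bound $(1+O(\theta n^{-2}))\,e^{-(1-\delta)\theta/4}$.

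The main obstacle is the careful bookkeeping of the boundary contributions. The reflecting wall at $q = 0$ introduces $O(1/n^2)$ corrections that the naive choice $\phi(q) = \cos(\alpha q)$ does not absorb; this is what forces the nontrivial shift $c_0$, and the analogous truncation of the geometric jump tails near $q = n-1$ must be handled in its own right. The spectral upgrade additionally requires uniform-in-$n$ control of the expansion coefficients of $\mathbf{1}$ and $\delta_{q_0(a)}$ in the approximate basis, so that the $O(n^{-3})$ error in the eigenvalue equation, summed across all modes, does not overwhelm the target precision $(1+O(\theta n^{-2}))$.
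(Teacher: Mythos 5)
Your setup is sound and matches the paper's starting point: sampled at scan ends, $q_t(a)$ is indeed a Markov chain whose interior jump law is the symmetric, mean-zero, variance-$2$ law (\ref{jumpprob1}), and the rate $e^{-\theta/4}$ you target is the correct Neumann--Dirichlet eigenvalue $\pi^2/(4n^2)$ per scan. The gap is in the two steps you assert but do not carry out, and at least one of them fails as stated. First, the inequality $\mathbb{E}\bigl[\phi(q_t(a))\mathbf{1}_{T_a>t}\bigr]\le \lambda^t\phi(q_0(a))\bigl(1+O(\theta n^{-2})\bigr)$ does not follow from a pointwise one-step error of size $O(n^{-3})$. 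Near the absorbing end, $\phi(i)\asymp (n-1-i)/n$, while the error created by the boundary truncation there (mass that in the ideal walk would overshoot $n-1$, where the extended cosine is negative, is instead deposited at $n-1$, where $\phi=0$) is one-sided of order $2^{-(n-1-i)}/n$ --- comparable to $\phi(i)$ itself at distance $O(1)$ from $n-1$. So the error is not a small multiple of $\phi$, and if you instead propagate it additively you accumulate $O(n^{-3})(1-\lambda)^{-1}=O(n^{-1})$, which destroys the bound as soon as $\theta\gtrsim\log n$; the lemma, however, must hold for all $\theta\le n$, where the right-hand side is as small as $e^{-(1-\delta)n/4}$. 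Repairing this requires controlling how much time the surviving (conditioned) chain spends within $O(1)$ of $n-1$, or comparing $\phi$ with the exact Perron eigenfunction of the substochastic transition matrix; neither is supplied. Second, the ``spectral upgrade'' that removes the factor $n$ is only sketched: the $\phi_k$ are neither exact eigenfunctions nor exactly orthogonal, the expansion $\lambda_k=1-(2k-1)^2\pi^2/(4n^2)+O(n^{-3})$ is invalid for $k\asymp n$, and the uniform-in-$n$ control of the expansion coefficients and of the per-mode errors over $\theta n^2$ steps --- which you yourself flag as the main obstacle --- is exactly where all the work lies.

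For comparison, the paper circumvents both difficulties rather than solving them head-on: it couples $q_t(a)$ from below with a genuine reflected random walk (Lemma \ref{rwcouplem}), obtains the bound for a single fixed constant $\theta_0$ softly from Donsker's theorem (Proposition \ref{convprop}, Corollary \ref{convcor}) combined with an \emph{exact} eigenvalue computation for the simple random walk (Lemma \ref{wilsoncouptimelem}, Corollary \ref{couptimecor}), and then extends to all $\theta_\delta\le\theta\le n$ by iterating the constant-time bound $\lfloor\theta/\theta_0\rfloor$ times via the Markov property, which converts the constant prefactor into the $\delta$ slack with no long-time error bookkeeping. That last restart-and-iterate device is also the natural way to rescue your plan: you would only need your approximate-eigenfunction estimate on a single time block of length $\theta_0 n^2/\pi^2$, but even then the boundary error analysis and the multi-mode expansion would have to be done in full before the argument is a proof.
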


In order to prove Lemma \ref{couptimelem}, we analyze the process $q_t(a)$ by coupling it with another random walk that we may have a better control. From now on, we focus on the process $\{ q_{t}(a)\}_{t\in \mathbb{N}}$, regarding each exploration of the whole line as a single step.  Let $X$ be a random variable on $\mathbb{Z}$ with the following probability distribution:
\begin{enumerate}
	\item[$\bullet$] For all $k\in \mathbb{Z}$, $\mathbb{P}(X=k) = 2^{-(|k|+3)} + 2^{-(3-|k|)}\textbf{1}_{\{|k|\leq 1 \}} $.
\end{enumerate}
Note that $X$ has mean $0$ and variance $2$. Let $X_i$ be i.i.d. copies of $X$, and define $S_t$ to be
\begin{equation*}
S_t := \sum_{i=1}^t X_i.
\end{equation*}

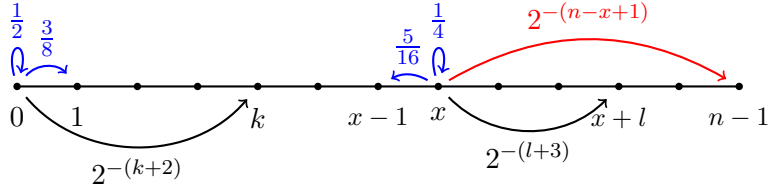
\begin{figure}\label{jumpfig}
	\centering
	\begin{tikzpicture}[thick,scale=1, every node/.style={transform shape}]
	\foreach \x in {0,...,12}{
		\filldraw[black] (0.8*\x,0) circle (1pt);
	}
	\draw (0,0)--(9.6,0);	
	\node[label=below:{$0$}] (n1) at (0,0) {};
	\node[label=below:{$1$}] (n2) at (0.8,0) {};
	\node[label=below:{$k$}] (n3) at (3.2,0) {};
	\node[label=below:{\small{$x-1$}}] (n4) at (4.8,0) {};
	\node[label=below:{$x$}] (n5) at (5.6,0) {};
	\node[label=below:{\small{$x+l$}}] (n6) at (8,0) {};
	\node[label=below:{\small{$n-1$}}] (n7) at (9.6,0) {};
	\draw[every loop]
	(n1) edge[blue, bend left=50, auto=left] node {$\frac{3}{8}$} (n2)
	(n1) edge[ bend right=50, auto=right] node {$2^{-(k+2)} $} (n3)
	(n1) edge[blue, loop above=10, auto=above] node {$\frac{1}{2}$} (n1)
	(n5) edge[blue, bend right, auto=right] node {$\frac{5}{16}$} (n4)
	(n5) edge[ bend right=46, auto=right] node {$2^{-(l+3)}$} (n6)
	(n5) edge[red, bend left, auto=left] node {$2^{-(n-x+1)}$} (n7)
	(n5) edge[blue, loop above, auto=above] node {$\frac{1}{4}$} (n5);
	\end{tikzpicture}
	\caption{Jump probabilities of the process $\{q_{t}(a)\}$. }
	\vspace{4mm}
\end{figure}

\begin{lemma}\label{rwcouplem}
	For all $a\in[n_0]$, there is a coupling between $\{ q_{t}(a)\}_{t\in \mathbb{N}}$ and $\{X_i\}_{i\in \mathbb{N}}$ such that for all $t\geq 0$,  on the event $\{ T_a > t \}$ we have
	\begin{equation*}
	q_{t}(a)  ~\geq~ \widehat{S}^a_t~:=~ S_t - (\min \{S_s: s \leq t \} \wedge (-q_0(a)) ).
	\end{equation*}
\end{lemma}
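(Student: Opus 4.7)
The plan is to prove Lemma \ref{rwcouplem} by constructing an explicit coupling between the CAT shuffle dynamics and the i.i.d.\ sequence $\{X_i\}_{i\in\mathbb{N}}$ via an auxiliary ``virtual CAT shuffle'' on the extended line $\mathbb{Z}$. The intuition is threefold: (i) a single card's jump in the interior of a sweep already has law $X$, so the interior dynamics match exactly; (ii) the boundary at $0$ only pushes the actual card further to the right relative to an unreflected walk; and (iii) the troublesome boundary at $n-1$ can be excluded entirely by the conditioning on $\{T_a > t\}$.

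First, I would specify the coupling step-by-step. At each sweep indexed by $t+1$, the CAT shuffle provides a direction coin and $n-1$ physical edge coins. I append, independently of everything, fresh auxiliary coins at every ``virtual edge'' $(n-1,n), (n,n+1),\ldots$ and $(-1,0),(-2,-1),\ldots$. Together these define a virtual sweep on $\mathbb{Z}$, and I let $X_{t+1}$ be the signed displacement that a virtual card placed at position $q_t(a)$ would undergo in this sweep. Because a single card's jump depends only on the coins in its ``run of swaps'' and the jump law on $\mathbb{Z}$ is translation-invariant, $X_{t+1} \sim X$ holds independently of $q_t(a)$; since distinct sweeps use independent coin sets, the $X_{t+1}$'s are i.i.d.\ as required.

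Next, I would establish the sweep-by-sweep comparison: on the event $\{T_a > t+1\}$, $Y_{t+1} := q_{t+1}(a) - q_t(a) \geq X_{t+1}$. A case analysis by direction and by the position $x = q_t(a)$ shows that, whenever the actual card's run does not reach the right boundary $n-1$, the stopping coin of the run is a physical coin and is shared between the actual and virtual processes, yielding $Y_{t+1} = X_{t+1}$; conversely, the only way $Y_{t+1} \neq X_{t+1}$ (in the right direction) is that the actual card was truncated to $n-1$, which is precisely the complement of $\{T_a > t+1\}$. In the remaining subcase where the virtual card would cross the left boundary $0$, the actual card is truncated at $0$ and hence exceeds the virtual one, giving $Y_{t+1} > X_{t+1}$. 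Combining, $Y_{t+1} \geq X_{t+1}$ on $\{T_a > t+1\}$.

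Finally, I would conclude by induction on $t$. The base $t=0$ is trivial since $\hat{S}^a_0 = q_0(a)$. From the definition one checks the Skorokhod recursion $\hat{S}^a_{t+1} = \max(\hat{S}^a_t + X_{t+1}, 0)$. On $\{T_a > t+1\} \subseteq \{T_a > t\}$, the inductive hypothesis gives $q_t(a) \geq \hat{S}^a_t$, and combined with the coupling bound $Y_{t+1} \geq X_{t+1}$ this yields $q_{t+1}(a) \geq \hat{S}^a_t + X_{t+1}$; together with the trivial $q_{t+1}(a) \geq 0$, we obtain $q_{t+1}(a) \geq \hat{S}^a_{t+1}$. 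The main technical obstacle will be the careful case analysis of step two---in particular, verifying that the introduction of virtual coins does not disturb the marginals of the physical shuffle and that on the conditioning event the pointwise equality $Y_{t+1} = X_{t+1}$ (or the strict inequality in the left-boundary case) genuinely holds in the coupled sample space.
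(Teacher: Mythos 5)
Your proposal is correct and takes essentially the same route as the paper: both couple the card's per-sweep displacement with an i.i.d.\ copy of $X$, note that the left boundary can only push $q_t(a)$ above the unreflected walk while the right boundary is excluded on $\{T_a>t\}$, and conclude that $q_t(a)$ dominates the reflected walk $\widehat{S}^a_t$. The difference is only in presentation: you realize the coupling explicitly through auxiliary ``virtual'' edge coins and an induction via the recursion $\widehat{S}^a_{t+1}=\max(\widehat{S}^a_t+X_{t+1},0)$, whereas the paper couples the one-step laws directly from the exact transition probabilities (\ref{jumpprob1})--(\ref{jumpprob2}).
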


\begin{remark}\label{couplingrmk}
	\textnormal{$\widehat{S}^a_t$ is obtained by pushing $S_t+ q_0(a)$ above as little as possible while making it stay non-negative.}
\end{remark}

\begin{proof}[Proof of Lemma \ref{rwcouplem}]
	We first notice that the distribution of $q_{t}(a) - q_{t-1}(a)$ is very similar to that of $X$, as drawn in Figure 1. Given that $0< x:= q_{t-1}(a) <n-1 $, one can see that
	\begin{enumerate}
		\item [$\bullet$] For $-x+1 \leq k  \leq n-x-2 $, 
		\begin{equation}\label{jumpprob1}
		\mathbb{P} ( q_{t}(a) = x+k) = 2^{-(|k|+3)} + 2^{-(3-|k|)} \textbf{1}_{\{|k|\leq 1\}} = \mathbb{P}(X=k);
		\end{equation}
		
		\item [$\bullet$] For $k\in \{-x, \;n-x-1 \}$, $\mathbb{P} (q_{t}(a) = x+k) = 2^{-(|k|+2)} + 2^{-(3-|k|)} \textbf{1}_{\{|k|\leq 1\}}$.
	\end{enumerate}
	If $x = 0$, then
	\begin{enumerate}
		\item [$\bullet$] For $0\leq k \leq n-2$, 
		\begin{equation}\label{jumpprob2}
		\mathbb{P} (q_{t}(a) = k) =  2^{-(k+2)} + \frac{1}{4} \textbf{1}_{\{k\leq 1 \}} \geq \mathbb{P}(X=k);
		\end{equation}
		
		\item [$\bullet$] For $k=n-1$, $\mathbb{P} (q_{t} (a) = k) = 2^{-n}$.
	\end{enumerate}
	
	Notice that if $0<x:= q_{t-1}(a)<n-1$, we have $\mathbb{P}(q_{t}(a) = 0) = \mathbb{P}(X \leq -x)$. Combined with (\ref{jumpprob1}), this implies that when $0<x<n-1$,  the laws of $q_{t}(a)$ and $X_t$ can be coupled so that
	\begin{equation}\label{couplingeq1}
	q_{t}(a)-x = X_t \vee (-x).
	\end{equation} 
	Similarly when $x=0$, we have $\mathbb{P}(q_{t}(a)=0) \leq  \mathbb{P}(X \leq 0)$, and hence with (\ref{jumpprob2}) gives us that we have a coupling of $q_{t}(a)$ and $X_t$ that satisfies
	\begin{equation}\label{couplingeq2}
	q_{t}(a)-x \geq X_t \vee (-x).
	\end{equation}
	
	\noindent As mentioned in Remark \ref{couplingrmk}, $\widehat{S}^A_t$ is the process obtained by pushing $S_t + q_0(a)$ above to $0$ whenever it hits a negative point. Therefore, under the aforementioned coupling (\ref{couplingeq1}, \ref{couplingeq2}), $q_{t}(a)$ and $\widehat{S}^a_t$ satisfies 
	\begin{equation*}
	q_{t}(a) \geq \widehat{S}_t.
	\end{equation*}
\end{proof}

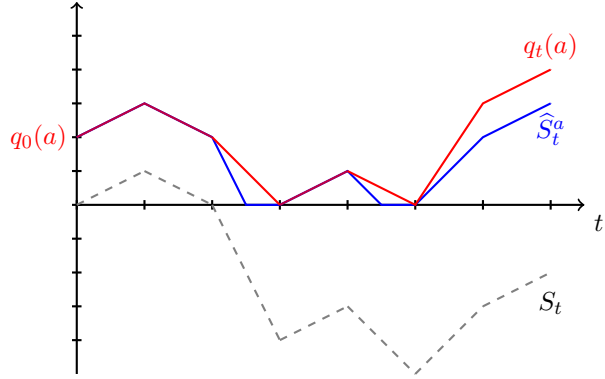
\begin{figure}
	\centering
	\begin{tikzpicture}[thick,scale=.9, every node/.style={transform shape}]
	\foreach \x in {0,...,7}{
		\draw (\x cm, 2pt) -- (\x cm, -2pt);
		}
	\foreach \x in {-4,...,5}{
		\draw (2pt, .5*\x cm)-- (-2pt, .5*\x cm);
		}
	\draw[thick,->] (0,0) -- (7.5,0) node[anchor=north west] {$t$};
	\draw[thick,->] (0,-2.5) -- (0,3) ;
	\draw[dashed,gray] (0,0) -- (1,.5);
	\draw[dashed,gray] (1,.5) -- (2,0);
	\draw[dashed,gray] (2,0) -- (3,-2);
	\draw[dashed,gray] (3,-2) -- (4,-1.5);
	\draw[dashed,gray] (4,-1.5) -- (5,-2.5);
	\draw[dashed,gray] (5,-2.5) -- (6,-1.5);
	\draw[dashed,gray] (6,-1.5) -- (7,-1) ;
	 \node[label=below:{$S_t$}] (n7) at (7,-1) {};
	
	\draw[thick,blue] (0,1) -- (1,1.5)--(2,1)--(2.5,0)--(3,0)--(4,0.5)--(4.5,0)--(5,0)--(6,1)--(7,1.5);
	 \filldraw[blue] (7,1.5) circle (0pt) node[blue, anchor=north] {$\widehat{S}_t^a $};
	  \filldraw[red] (7,2) circle (0pt) node[red, anchor=south] {$q_{t}(a) $};
	    \filldraw[red] (0,1) circle (0pt) node[red, anchor=east] {$q_{0}(a) $};
	 \draw[thick, purple] (0,1) -- (1,1.5)--(2,1);
	 \draw[thick, red] (2,1) -- (3,0);
	 \draw[thick,purple] (3,0)--(4,0.5);
	 \draw[thick, red] (4,0.5)--(5,0)--(6,1.5)--(7,2);
	\end{tikzpicture}
	\caption{Sample paths of $q_{t}(a)$ and $\widehat{S}^a_t := S_t - (\min_{s\leq t} S_s \wedge (-q_0(a)))$, with $q_0(a)=2$. }
\end{figure}
Let us define the stopping time $\tau_n^x$ such that
 $$\tau_n^x := \min\{t:  S_t - (\min_{s\leq t} S_s \wedge (-x)) \geq n\}. $$
Due to Lemma \ref{rwcouplem}, it suffices to prove the corresponding inequality for $\tau_n^x$ as (\ref{couptimeineq}) for arbitrary $x$. Since $S_t - (\min_{s\leq t} S_s \wedge (-x)) $ is increasing in $x$, it is enough to look at $\tau_n^0$.  Consider the extension $S_t$ to non-integer $t$'s by setting $(t, S_t)$ to be the point on the linear segment connecting $(r, S_r)$ and $(r+1, S_{r+1})$ for $r= \lfloor t \rfloor$. Since the increments of $S_t$ for integer times are i.i.d. with mean $0$ and variance $2$, Donsker's theorem (see e.g., \cite{Durrett}) directly implies the following:

\begin{proposition}\label{convprop}
 Let $B_t$ and $R_t$ denote the standard Brownian motion and the standard reflected Brownian motion, respectively. Then, as $m \rightarrow \infty$, we have the following weak convergence when viewed as measures on $C[0,\infty)$, the space of continuous functions on $[0,\infty)$:
	\begin{equation*}
	\begin{split}
	\frac{S_{m^2 t}}{\sqrt{2}m} ~~~~~~~~~~~&\longrightarrow ~~~~B_t;\\
	\frac{S_{m^2 t} -\min_{s\leq m^2t} S_s }{\sqrt{2}m} ~~&\longrightarrow ~~~~R_t.
	\end{split}
	\end{equation*}
\end{proposition}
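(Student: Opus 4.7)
This proposition follows directly from Donsker's invariance principle together with the continuous mapping theorem applied through the reflection functional, so the task is mostly bookkeeping. First I would verify the hypotheses of Donsker's theorem for the walk $S_t$. The law of $X$ is symmetric about $0$, so $\mathbb{E}[X]=0$ is automatic. For the second moment, a direct computation using $\sum_{k\geq 1} k^2 2^{-k} = 6$ combined with the extra mass $2^{-(3-|k|)}\mathbf{1}_{\{|k|\leq 1\}}$ on $\{-1,0,1\}$ yields $\mathbb{E}[X^2] = 2\cdot(5/16) + 2\sum_{k\geq 2} k^2 2^{-(k+3)} = 5/8 + 11/8 = 2$. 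Since the $X_i$ are i.i.d.\ with mean $0$ and variance $\sigma^2=2$, Donsker's theorem applied to the piecewise linear interpolation $t \mapsto S_t$ yields the weak convergence $S_{m^2 t}/(\sqrt{2}\,m) \Rightarrow B_t$ in $C[0,T]$ for every $T>0$. Since weak convergence in $C[0,\infty)$ under the topology of uniform convergence on compacts is equivalent to convergence in each $C[0,T]$, this establishes the first statement.

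For the second convergence, I would introduce the reflection functional $\phi : C[0,\infty) \to C[0,\infty)$ defined by $\phi(f)(t) := f(t) - \inf_{s\leq t} f(s)$. A two-line triangle-inequality argument shows that $|\phi(f)(t)-\phi(g)(t)| \leq 2\|f-g\|_{\infty,[0,T]}$ for every $t \leq T$, so $\phi$ is continuous from $C[0,T]$ to itself, and hence continuous on $C[0,\infty)$ under the compact-open topology. A change of variables $s = m^2 u$ (using that $S_\cdot$ is continuous thanks to linear interpolation, so the minimum is the same whether taken over $u \in [0,t]$ or over $s \in [0,m^2 t]$) yields the key identity
\begin{equation*}
\frac{S_{m^2 t} - \min_{s\leq m^2 t} S_s}{\sqrt{2}\,m} \;=\; \phi\!\left(\frac{S_{m^2\cdot}}{\sqrt{2}\,m}\right)(t).
\end{equation*}
Applying the continuous mapping theorem to the first convergence then gives that the left-hand side converges weakly to $\phi(B)(t) = B_t - \inf_{s\leq t} B_s$. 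By Lévy's identity, this process has the same law as $|B_t|$, i.e.\ the standard reflected Brownian motion $R_t$, establishing the second statement.

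The only real obstacle is a minor topological one: checking that the reflection map is continuous in the topology in which convergence is stated, and that one can legitimately pass between $C[0,T]$ and $C[0,\infty)$. Both are standard, so beyond the moment check and the change of variables, no new probabilistic input is needed.
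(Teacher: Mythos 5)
Your proposal is correct and follows essentially the same route as the paper: Donsker's theorem for the first convergence, then the continuity of the reflection functional $f \mapsto f - \inf_{s\leq \cdot} f(s)$ together with Lévy's identity for the second. Your added details (the variance computation $\mathbb{E}[X^2]=2$ and the explicit Lipschitz bound for the reflection map) are correct and merely make explicit what the paper leaves to the reader.
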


\begin{proof}
	The first equation is a restatement of Donsker's theorem. For the second part, define $\Psi : C[0,\infty) \rightarrow C[0,\infty)$ as $\Psi(f)(t) = f(t)- \min_{s \leq t} \{f(s)\}  $. Since $\Psi(B_t) = R_t$ in law and $\Psi$ is continuous with respect to the supremum norm topology, the second convergence follows from the first one. 
\end{proof}

Define $\tau^R := \min\{t>0: R_t \geq 1/\sqrt{2}\}$. As an immediate consequence of Proposition \ref{convprop}, we have the following corollary.

\begin{corollary}\label{convcor}
	For any constant $\theta>0$ we have
	\begin{equation}\label{taulim}
	\lim_{n\rightarrow \infty} \mathbb{P}(\tau_n^0 > \theta n^2 ) = \mathbb{P}(\tau^R > \theta )
	= 
	\mathbb{P} (\tau^{|B|} > \theta),
	\end{equation}
	where $\tau^{|B|} := \min \{t>0 : |B_t| \geq 1/\sqrt{2} \}$.
\end{corollary}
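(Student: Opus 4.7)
The plan is to prove the two equalities in \eqref{taulim} by soft arguments: for the first I will rescale and invoke Proposition \ref{convprop} together with the continuous mapping / portmanteau theorems, while for the second I will invoke L\'evy's classical identity between reflected Brownian motion and $|B_t|$.

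For the first equality, I would start with the observation that new running maxima of $s \mapsto S_s - \min_{u \leq s} S_u$ can only occur at integer times, since by the linear interpolation convention $S$ is monotone on each unit interval. Hence
\[
\{\tau_n^0 > \theta n^2\} = \Big\{ \sup_{0 \leq s \leq \theta n^2} \big(S_s - \min_{u \leq s} S_u\big) < n \Big\}.
\]
Rescaling time by $n^2$ and state space by $\sqrt{2}\,n$ transforms this event into $\big\{\sup_{t \leq \theta} \tilde R_t^{(n)} < 1/\sqrt{2}\big\}$, where
\[
\tilde R_t^{(n)} := \frac{S_{n^2 t} - \min_{s \leq n^2 t} S_s}{\sqrt{2}\, n}.
\]
By Proposition \ref{convprop} applied with $m = n$, $\tilde R^{(n)}$ converges weakly to $R$ in $C[0,\theta]$ with the uniform topology. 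Since $f \mapsto \sup_{t \leq \theta} f(t)$ is continuous on this space, the continuous mapping theorem yields $\sup_{t \leq \theta} \tilde R^{(n)}_t \Rightarrow \sup_{t \leq \theta} R_t$. The law of $\sup_{t \leq \theta} R_t$ is continuous at $1/\sqrt{2}$ (it has a density on $(0,\infty)$, as will be clear from L\'evy's identity below), so the portmanteau theorem gives
\[
\mathbb{P}\big(\tau_n^0 > \theta n^2\big) \longrightarrow \mathbb{P}\Big(\sup_{t \leq \theta} R_t < \tfrac{1}{\sqrt{2}}\Big) = \mathbb{P}(\tau^R > \theta),
\]
where the last identity uses path continuity of $R$.

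For the second equality, I would invoke L\'evy's theorem: the two processes $\big(B_t - \min_{s \leq t} B_s\big)_{t \geq 0}$ and $\big(|B_t|\big)_{t \geq 0}$ have the same law on $C[0,\infty)$. Since the first hitting time of level $1/\sqrt{2}$ is a measurable functional of the continuous path, $\tau^R$ and $\tau^{|B|}$ have identical distributions, giving $\mathbb{P}(\tau^R > \theta) = \mathbb{P}(\tau^{|B|} > \theta)$ for every $\theta > 0$.

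The only mildly delicate point is the passage from the discrete stopping time $\tau_n^0$ to a continuous supremum, which is handled by the integer-time extrema remark above. Beyond that, the argument is a clean application of Donsker's theorem (in its reflected form, already supplied by Proposition \ref{convprop}) and of L\'evy's classical identity, so I do not anticipate any significant obstacle.
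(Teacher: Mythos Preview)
Your proposal is correct and follows the same approach as the paper, which simply says the first equality is ``obvious by Proposition~\ref{convprop}'' and the second follows from the identity in law $(R_t)_{t\geq 0} = (|B_t|)_{t\geq 0}$. You have supplied the details behind ``obvious''---the continuous mapping theorem applied to the sup functional, the atomlessness of $\sup_{t\leq\theta}R_t$ at $1/\sqrt{2}$, and the integer-time observation handling the passage from the discrete stopping time to a continuous supremum---all of which are the natural ingredients and are carried out correctly.
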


\begin{proof}
	The first equality is obvious by Proposition \ref{convprop}. The second follows by the fact that $(R_t)_{t\geq 0} = (|B_t|)_{t \geq 0}$ in law (see e.g., Chapter 3.6 of \cite{KS}).
\end{proof}
From now on, we choose to look at $\tau^{|B|}$ instead of $\tau^R$. Let $\widetilde{S}_m$ denote the simple random walk so that the increments are i.i.d. $2$Ber$(\frac{1}{2})-1$ and $\widetilde{S}_0 = 0$. Let $\widetilde{\tau}^n := \min\{ m: \widetilde{S}_m \notin (-n/\sqrt{2} , n/\sqrt{2})  \}$. Then the following lemma is based on the same spirit as   Lemma 9 of \cite{Wilson}. We postpone its proof to \S\ref{subsecwilson}.

\begin{lemma}\label{wilsoncouptimelem}
	There exists a constant $C>0$ that satisfies 
	\begin{equation*}
		\mathbb{P} (\widetilde{\tau}^n > \theta n^2) < C(1+ O(\theta n^{-2})) \exp \left(-\frac{\pi^2 \theta}{4} \right)
	\end{equation*}
 for all $\theta >0$ ($\theta$ may depend on $n$).
\end{lemma}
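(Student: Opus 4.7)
The plan is to carry out a spectral decomposition of the simple random walk $\widetilde{S}_m$ absorbed at $\pm N$, where $N := \lceil n/\sqrt{2}\rceil = n/\sqrt{2} + O(1)$, since $\widetilde{\tau}^n$ is exactly the first hitting time of $\{-N,N\}$ by $\widetilde{S}$ started from $0$. Restricting the transition matrix $P$ to $V = \{-N+1,\ldots,N-1\}$ yields a symmetric substochastic operator, which is diagonalized by the classical Dirichlet sine basis $\phi_k(j) = \sin(\pi k(j+N)/(2N))$, $k=1,\ldots,2N-1$, with eigenvalues $\lambda_k = \cos(\pi k/(2N))$. Using the normalization $\sum_{j\in V}\phi_k(j)^2 = N$, together with $\phi_k(0) = (-1)^{(k-1)/2}$ and the summation-of-sines identity $\sum_{j\in V}\phi_k(j) = \cot(\pi k/(4N))$ for odd $k$ (which vanishes for even $k$), I obtain the expansion
\[
\mathbb{P}(\widetilde{\tau}^n > m) \;=\; \sum_{j\in V}(P^m)_{0j} \;=\; \frac{1}{N}\sum_{k\text{ odd}}(-1)^{(k-1)/2}\cot\!\left(\frac{\pi k}{4N}\right)\lambda_k^m.
\]

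Next, I bound this series by $C\lambda_1^m$ for an absolute constant $C$. Each coefficient $\frac{1}{N}\cot(\pi k/(4N))$ has magnitude of order $1/k$ for $k\leq N$ and of order $1/N$ for $k$ close to $2N$. For $k$ in the bulk of $[1,N]$, Taylor expansion of cosine gives $|\lambda_k/\lambda_1|^m \leq \exp(-c(k^2-1)m/n^2)$, which decays in a Gaussian manner in $k$ once $m \gtrsim n^2$; the terms with $k$ near $2N$ are comparable in magnitude to the leading one but are suppressed by the extra $1/N$ damping. A standard geometric/theta-function-style estimate then yields $\mathbb{P}(\widetilde{\tau}^n > m) \leq C\lambda_1^m$ uniformly in $m$.

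Finally, Taylor expansion gives $\log\cos(\pi/(2N)) = -\pi^2/(8N^2) + O(N^{-4})$, and with $N = n/\sqrt{2}+O(1)$ this becomes $-\pi^2/(4n^2) + O(n^{-3})$, so $\lambda_1^{\theta n^2} = \exp(-\pi^2\theta/4 + O(\theta n^{-2}))$. When $\theta n^{-2}$ stays bounded, this is at most $e^{-\pi^2\theta/4}(1 + O(\theta n^{-2}))$, giving the desired inequality; otherwise the right-hand side of the lemma already exceeds $1$ and the bound is trivial. The main technical obstacle lies in the geometric-sum step: for small $\theta$ the number of non-negligible terms is of order $1/\sqrt{\theta}$, so a naive $\ell^1$ bound on the coefficients would blow up like $\log(1/\theta)$. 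One must instead exploit the alternating sign $(-1)^{(k-1)/2}$ (consistent with the sum collapsing to the trivial value $1$ when $m=0$), and for $\theta$ bounded below by a constant use the geometric shrinkage $|\lambda_3/\lambda_1|^{\theta n^2} \leq e^{-2\pi^2\theta}$ together with the $1/k$ decay of the coefficients to obtain a universal multiple of $\lambda_1^m$.
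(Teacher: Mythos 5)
Your spectral plan is essentially the paper's proof written in different coordinates: the paper first passes to $|\widetilde{S}_m|$ and diagonalizes the killed walk on $\{0,\dots,n-1\}$ with the cosine basis $\cos\left((2j+1)\pi x/(2n)\right)$ and eigenvalues $\cos\left((2j+1)\pi/(2n)\right)$, while you keep the two-sided killed walk on $\{-N+1,\dots,N-1\}$ and use the Dirichlet sine basis; since $\phi_k(0)=0$ for even $k$, only the symmetric (odd-$k$) modes contribute and the two expansions coincide. Your eigenvalues, normalization, $\cot$ coefficients, and the bulk estimate (Gaussian decay in $k$ beating the $1/k$ coefficients, with the modes near $k=2N$ killed by their small coefficients) are correct, and they do give $\mathbb{P}(\widetilde{\tau}^n>m)\leq C\lambda_1^m$ for $m\gtrsim n^2$, which is the heart of the matter.

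However, your treatment of the two extreme regimes is flawed. First, the ``main technical obstacle'' you describe for small $\theta$ does not exist: for $\theta\leq\theta_0$ (any fixed constant) the left-hand side is a probability, hence at most $1\leq e^{\pi^2\theta_0/4}\,e^{-\pi^2\theta/4}$, so the claim is trivial there. This is exactly the paper's $\min\{1,\cdot\}$ step; the alternating-sign cancellation you invoke is never carried out and is not needed, so as written that portion of your argument is a gap (though trivially repairable). Second, your dismissal of the regime where $\theta n^{-2}$ is unbounded (``the right-hand side already exceeds $1$'') is false: there the right-hand side is $C(1+O(\theta n^{-2}))e^{-\pi^2\theta/4}$, which is extremely small. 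That regime must instead be covered by the uniform bound $\mathbb{P}\leq C\lambda_1^m$, and here your bookkeeping is inconsistent: with $N=\lceil n/\sqrt{2}\rceil=n/\sqrt{2}+O(1)$ your own expansion gives $\log\lambda_1=-\pi^2/(4n^2)+O(n^{-3})$, hence $\lambda_1^{\theta n^2}=\exp\left(-\pi^2\theta/4+O(\theta/n)\right)$, not $\exp\left(-\pi^2\theta/4+O(\theta n^{-2})\right)$. The $O(\theta/n)$ correction is harmless (absorbable into $C$) whenever $\theta\lesssim n$, which is all that is used in the paper, but it does not produce the stated $(1+O(\theta n^{-2}))$ factor for arbitrary $\theta$; the paper sidesteps this rounding issue by proving the restated version with integer boundary $\pm n$ and rate $\pi^2/8$, and bounds the spectral sum by the cruder chord estimate $\cos((2j+1)\alpha)\leq 1-(2j+1)(1-\cos\alpha)$ plus a geometric series rather than your theta-function bound. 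So: right approach and correct central estimate, but replace the small-$\theta$ discussion by the trivial probability bound and handle the large-$\theta$ regime through the uniform spectral bound with honest error terms.
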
 

The following Corollary is a consequence of Corollary \ref{convcor} and Lemma \ref{wilsoncouptimelem}.

\begin{corollary}\label{couptimecor}
	For any $ \delta >0$, there exist constants $\theta_0 = \theta_0(\delta)$ and $N=N(\delta, \theta_0)>0$  such that for all $n\geq N$, we have
	 \begin{equation*}
	 \mathbb{P} \left(\tau^0_n > \theta_0 n^2 \right) \leq (1+O(\theta_0 n^{-2})) \exp \left(-\frac{(1-\delta)}{4} \pi^2 \theta_0  \right).
	 \end{equation*}
\end{corollary}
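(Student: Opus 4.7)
The plan is to combine Lemma \ref{wilsoncouptimelem} (which gives the desired exponential decay for the simple random walk exit time $\widetilde{\tau}^n$) with two invariance principles: one passing from $\widetilde{\tau}^n$ to the Brownian exit time $\tau^{|B|}$, and one passing from $\tau^{|B|}$ back to $\tau^0_n$ via Corollary \ref{convcor}.

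First, fix $\delta>0$. By Donsker's theorem (exactly as in the proof of Proposition \ref{convprop}), the rescaled walk $\widetilde{S}_{\lfloor n^2 t\rfloor}/n$ converges in distribution on $C[0,\infty)$ to a standard Brownian motion $B_t$. Since the first passage time $f\mapsto \inf\{t>0 : |f(t)|\geq 1/\sqrt{2}\}$ is almost surely continuous at Brownian paths (Brownian motion a.s.\ strictly crosses any level immediately after hitting it), the continuous mapping theorem yields $\widetilde{\tau}^n/n^2 \to \tau^{|B|}$ in distribution. Applying Lemma \ref{wilsoncouptimelem} with a fixed $\theta$ and passing to the limit $n\to\infty$ gives
$$\mathbb{P}(\tau^{|B|} > \theta) ~\leq~ C\exp\left(-\frac{\pi^2 \theta}{4}\right)$$
for every $\theta>0$, where $C$ is the constant from Lemma \ref{wilsoncouptimelem}.

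Next, I absorb the constant $C$ into the exponent. Choose $\theta_0 = \theta_0(\delta)$ large enough that $C \leq \exp\!\left(\frac{\delta \pi^2 \theta_0}{8}\right)$, e.g.\ $\theta_0 \geq 8(\log C)_+/(\delta\pi^2)$. Then
$$\mathbb{P}(\tau^{|B|} > \theta_0) ~\leq~ \exp\!\left(-\frac{\pi^2 \theta_0}{4}\left(1-\frac{\delta}{2}\right)\right).$$

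Finally, I invoke Corollary \ref{convcor}, which gives $\mathbb{P}(\tau^0_n > \theta_0 n^2) \to \mathbb{P}(\tau^{|B|} > \theta_0)$ as $n\to\infty$ for the fixed value $\theta_0$. Hence there exists $N = N(\delta,\theta_0)$ such that for all $n\geq N$,
$$\mathbb{P}(\tau^0_n > \theta_0 n^2) ~\leq~ \exp\!\left(-\frac{\pi^2 \theta_0}{4}\left(1-\delta\right)\right),$$
which, since the prefactor $1 + O(\theta_0 n^{-2}) \geq 1$, yields the claimed inequality. The main subtlety is the first step — justifying that the exponential bound for $\widetilde{\tau}^n$ persists in the Brownian limit — which requires verifying almost-sure continuity of the first passage functional on Brownian paths; everything afterwards is a direct combination of Lemma \ref{wilsoncouptimelem} and Corollary \ref{convcor}.
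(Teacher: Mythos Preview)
Your proposal is correct and follows essentially the same route as the paper: use Donsker's theorem to transfer the bound of Lemma~\ref{wilsoncouptimelem} to the Brownian exit time $\tau^{|B|}$, choose $\theta_0$ large enough to absorb the constant $C$ into a factor $(1-\delta/2)$, and then use Corollary~\ref{convcor} to pass back to $\tau^0_n$, spending the remaining $\delta/2$ of slack on the finite-$n$ error. The only cosmetic difference is that the paper phrases the last step as $\mathbb{P}(\tau^0_n>\theta_0 n^2)\leq \mathbb{P}(\tau^R>\theta_0-\delta)$ rather than invoking an additive error in the convergence, but this is the same idea.
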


\begin{proof}
	Let $\delta>0$ be given. By Lemma \ref{wilsoncouptimelem}, we can pick a large $\theta_0 = \theta_0(\delta)$ such that for all constants $\theta \geq \theta_0 -\delta$ not depending on $n$,
	\begin{equation*}
	\mathbb{P}(\widetilde{\tau}^n > \theta n^2) \leq (1+ O(\theta n^{-2})) \exp \left(-\frac{(1-\delta/2)}{4} \pi^2 \theta\right).
	\end{equation*}
	Then Donsker's theorem implies that $\mathbb{P} ( \tau^{|B|} > \theta_0 ) \leq \exp(-(1-\delta/2)\pi^2 \theta_0)$, since $\widetilde{S}_{m^2t} / m$ converges to $(B_t)$ as in Proposition \ref{convprop}. Noting that $\tau^R$ and $\tau^{|B|}$ share the same law, we use (\ref{taulim}) to deduce that there exists $N=N(\delta, \theta_0)$ such that for all $n>N$,
	\begin{equation*}
	\mathbb{P}(\tau_n^0 > \theta_0 n^2) \leq \mathbb{P} ( \tau^R > \theta_0 - \delta) \leq (1+O(\theta_0 n^{-2})) \exp \left(-\frac{(1-\delta)}{4}\pi^2 \theta_0\right),
	\end{equation*}
	which is the desired inequality.
\end{proof}

\noindent\textit{Proof of Lemma \ref{couptimelem}}.  
As observed in Lemma \ref{rwcouplem}, we can couple the two processes $q_t(a)$ and $S_t$ such that $T_a \leq \tau_n^0 $, as a single increment in $S_t$ corresponds to $n-1$ steps of swapping in the CAT shuffle. Therefore, Corollary \ref{couptimecor} implies that
\begin{equation}\label{couptimeineq1}
\mathbb{P}(T_a > \theta_0 n^2) \leq (1+O(\theta_0 n^{-2}))\exp\left(-\frac{(1-\delta/2)}{4}\pi^2 \theta_0 \right)
\end{equation}
for some constant $\theta_0>0$ depending on $\delta$. For any $\theta>\theta_0$ and $n>N_\delta$, we have 
\begin{equation*}
\begin{split}
\mathbb{P}(T_a > \theta n^2) \leq& (1+O(\theta n^{-2})))\exp\left(-\frac{(1-\delta/2)}{4}\pi^2 \theta_0 \left\lfloor \frac{\theta}{\theta_0} \right\rfloor \right) \\
\leq& (1+O(\theta n^{-2})) \exp\left(-\frac{(1-\delta)}{4}\pi^2 \theta\right),
\end{split}
\end{equation*}
where the first inequality is obtained by iterating (\ref{couptimeineq1}) for $\lfloor \theta/\theta_0  \rfloor$-times and by the fact that $\theta \leq n$, and the last inequality holds for all $\theta_\delta <\theta \leq n$ where $\theta_\delta$ is a large constant. \qed

\section{The upper bound}\label{secupbd}

In \cite{Lacoin}, Lacoin derives the sharp upper bound on the mixing time of the random AT shuffle by introducing a monotone coupling of the model and implementing the censoring inequality in a clever way. They define the function $\widetilde{\sigma}_t$ which can be understood as the ``height'' of $\sigma_t$ to build up a monotone framework of the system. It turns out that a similar argument is also applicable to the CAT shuffle along with some appropriate adjustments.

However in the CAT shuffle, the major difficulty of adopting this argument comes from understanding the decay of the height $\widetilde{\sigma}_t$. In \cite{Lacoin}, exponential decay of $\widetilde{\sigma}_t$ is obtained using algebraic properties of the model based on its eigenvalues and eigenfunctions. 
Since this approach seems impossible  in the current context, we rely on the ideas developed in \S \ref{seconec} to deduce the same property for the CAT shuffle.

In \S\ref{subsecmono}, we introduce a monotone coupling for the CAT shuffle. In such a monotone system, we can take advantage of the censoring inequality, which essentially says that if we ignore some updates (swaps) in a CAT shuffle, then the distance from equilibrium of the resulting chain is greater than that of the original one.  In \S\ref{subsecpfmain2} we conclude the proof of Theorem \ref{main} based on the tools from the previous sections. Finally, we prove the decay estimate on $\widetilde{\sigma}_t$ in \S \ref{subsectildebd}.

\subsection{Monotone coupling}\label{subsecmono}

In this subsection, we introduce a monotone coupling for the CAT shuffle following the argument of Lacoin \cite{Lacoin} and Wilson \cite{Wilson}. Via the monotone coupling, we derive the censoring inequality which will be crucial in \S\ref{subsecpfmain2}. 

Let $S_{n}$ be the group of permutations on $[n] = \{ 1,\ldots,n\}$. For each $\sigma \in S_n$, we define the function $\widetilde{\sigma} : [n] \times [n] \rightarrow \mathbb{R}$ as follows:
\begin{equation}\label{sigtildef}
\widetilde{\sigma}(x,y) := \sum_{z=1}^x \textbf{1}_{\{\sigma(z) \leq y \}} - \frac{xy}{n}. 
\end{equation}
Subtracting $xy/n$ is introduced in order to set the average  $\mu(\widetilde{\sigma}(x,y))$ to be $0$, where $\mu$ denotes the uniform measure on $S_n$. Throughout this section, we use the following partial order on $S_n$ based on the function $\widetilde{\sigma}$:
\begin{center}
	For $\sigma,\sigma' \in S_n$, ~$\sigma \geq \sigma'$ ~~~if and only if ~~~ $\widetilde{\sigma}(x,y) \geq \widetilde{\sigma}' (x,y)$ ~ for all $x,y \in [n]$.
\end{center}
Under this ordering, one can observe that the identity element (denoted id) is maximal, and the permutation that maps $x$ to $n+1-x$ is minimal. 

\begin{definition}[Monotone coupling]\label{defmonocoup}
	\textnormal{Let $ \{U_t : t\in \mathbb{N}\}$ be the family of i.i.d.\,Ber$(\frac{1}{2})$ random variables, and let $\{\textbf{c}_i : i \in \mathbb{N} \}$ be the family of i.i.d.\,Unif$\{1,n\}$ random variables (i.e., $\textbf{c}_i = 1$ or $n$, each with probability half) that is independent from $U_t$'s.  Using $\textbf{c}_i$'s and $U_t$'s, we define the updates of the CAT shuffle as follows:
	\begin{enumerate}
		\item [$(1)$] At time $(n-1)i +1$ for each $i=0,1,... ,$ we begin the exploration starting from position $\textbf{c}_i$. That is, if for instance $\textbf{c}_i=1$, then during the time interval from $(n-1)i+1$ to $(n-1)(i+1)$, we explore the deck from left to right. 
		\item [$(2)$] Suppose that $(x, x+1) $ is the edge we are about to swap or not at time $t+1$. 
		\begin{enumerate}[leftmargin=.3cm]
			\item[$\bullet$] If either $U_t = 0$ and $\sigma_t(x) < \sigma_t(x+1)$ or $U_t =1$ and $\sigma_t(x) > \sigma_t(x+1)$, then we swap the edge $(x,x+1)$, hence obtaining $\sigma_{t+1}(x)=\sigma_t(x+1)$ and $\sigma_{t+1}(x+1)=\sigma_t(x)$.
			\item [$\bullet$] In other cases, we do nothing.
		\end{enumerate}
	\end{enumerate}
}
\end{definition}

In other words, if $U_t=0$ we reverse-sort the cards at positions $x, x+1$, whereas  if $U_t =1$ we sort the cards at $x,x+1$. One can easily check that this update rule exhibits the same transition matrix as the CAT shuffle. The following proposition describes a significant advantage of this coupling, namely  the preservation of monotonicity. For a proof, we refer to \cite{Lacoin}.

\begin{proposition}[\cite{Lacoin}, Proposition 3.1]\label{monoprop}
	Let $\xi, \xi' \in S_n$ and let $\sigma_t^\xi$ (resp. $\sigma_t^{\xi'}$) denote the CAT shuffle starting from $\xi$ (resp. $\xi'$) coupled by the aforementioned update rules. If $\xi \geq \xi'$, then we have
	\begin{equation*}
	\sigma_t^\xi \geq \sigma_t^{\xi'}~~~~ \textnormal{for all } t\geq 0.
	\end{equation*}
\end{proposition}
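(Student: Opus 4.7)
I would prove Proposition \ref{monoprop} by induction on $t$. Because the random direction indicators $\textbf{c}_i$ are shared between the two coupled chains, both chains update the same edge $(x, x+1)$ at each step using the same coin $U_t$, so it suffices to prove the one-step statement: if $\sigma \geq \sigma'$ and we apply the update rule of Definition \ref{defmonocoup} simultaneously to both with the same edge $(x,x+1)$ and the same $U$, then the resulting permutations $\tau, \tau'$ still satisfy $\tau \geq \tau'$.

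For this I would work with the integer-valued function $h_\sigma(a,b) := \sum_{z=1}^a \textbf{1}_{\{\sigma(z) \leq b\}}$, so that $\widetilde{\sigma}(a,b) = h_\sigma(a,b) - ab/n$ and the partial order is equivalent to $h_\sigma(a,b) \geq h_{\sigma'}(a,b)$ for every $(a,b)$. Since $\tau$ agrees with $\sigma$ outside of $\{x, x+1\}$, one has $h_\tau(a,b) = h_\sigma(a,b)$ for all $a \neq x$ (the positions $x$ and $x+1$ are either both inside or both outside the initial segment of length $a$), so the problem reduces to verifying $h_\tau(x,b) \geq h_{\tau'}(x,b)$ for every $b$. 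Using $\tau(x) = \max(\sigma(x), \sigma(x+1))$ when $U=0$ and $\tau(x) = \min(\sigma(x), \sigma(x+1))$ when $U=1$, a direct computation gives
\[
h_\tau(x,b) - h_\sigma(x,b) = \begin{cases} -\,\textbf{1}_{\{\sigma(x) \leq b < \sigma(x+1)\}}, & U = 0,\\ +\,\textbf{1}_{\{\sigma(x+1) \leq b < \sigma(x)\}}, & U = 1. \end{cases}
\]
The first indicator fires precisely when $h_\sigma(\cdot, b)$ has a strict local maximum at $x$ (i.e.\ $h_\sigma(x-1,b) = h_\sigma(x,b) - 1$ and $h_\sigma(x+1,b) = h_\sigma(x,b)$); the second, when it has a strict local minimum.

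The remaining case analysis is short. For $U = 0$, the only way $h_\tau(x,b) < h_{\tau'}(x,b)$ could fail would be if $h_\sigma(x,b) = h_{\sigma'}(x,b)$, $\sigma$ has a local max at $(x,b)$, and $\sigma'$ does not. But combining $h_\sigma(x-1,b) \geq h_{\sigma'}(x-1,b)$ with $h_\sigma(x-1,b) = h_\sigma(x,b) - 1 = h_{\sigma'}(x,b) - 1$ and the fact that $h_{\sigma'}(\cdot,b)$ has increments in $\{0,1\}$ forces $h_{\sigma'}(x-1,b) = h_{\sigma'}(x,b) - 1$; an analogous argument at $x+1$ gives $h_{\sigma'}(x+1,b) = h_{\sigma'}(x,b)$. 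Hence $\sigma'$ also has a local max, so both chains decrease by $1$ and the inequality is preserved. When $h_\sigma(x,b) \geq h_{\sigma'}(x,b) + 1$, a decrease of at most $1$ at one chain cannot reverse the inequality. The case $U = 1$ is symmetric, with local maxima replaced by local minima.

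The only (mild) obstacle is this last propagation-of-local-shape step; everything else is bookkeeping. The key technical point, inherited from \cite{Lacoin}, is that the unit-increment structure of $h_\sigma(\cdot, b)$ forces pointwise domination plus equality at a single $x$ to propagate to both neighbors, which synchronizes the "risky" updates of the two coupled chains and rules out the configuration in which the upper chain drops while the lower chain stays put.
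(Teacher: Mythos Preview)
Your argument is correct and is essentially the proof given in \cite{Lacoin}; the present paper does not reprove the proposition but simply refers the reader to that source. One cosmetic remark: calling the configuration $h_\sigma(x-1,b)=h_\sigma(x,b)-1$, $h_\sigma(x+1,b)=h_\sigma(x,b)$ a ``strict local maximum'' is a slight misnomer since $h_\sigma(\cdot,b)$ is nondecreasing, but since you immediately spell out the intended relations no confusion arises.
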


\begin{definition}
A probability distribution $\nu$ on $S_n$ is called \textit{increasing} if $\nu(\sigma) \geq \nu(\sigma')$ holds for all $\sigma, \sigma' \in S_n$ such that $\sigma \geq \sigma'$. 
\end{definition}
One property of the adjacent transposition shuffle is that it preserves the monotonicity of measures. This fact is formalized in the following lemma, whose proof can be found in \cite{Lacoin}.

\begin{lemma}[\cite{Lacoin}, Proposition A.1]\label{increasinglem}
	Let $\nu$ be an increasing probability measure on $S_n$. For any $x\in [n-1]$, let $\sigma^x$ be the resulting state of $\sigma$ after performing an update at edge $(x,x+1)$, i.e., either swap the labels $\sigma(x), \sigma(x+1)$ with probability half or stay fixed otherwise. Let $\nu^x$ denote the distribution of $\sigma^x$ when $\sigma \sim \nu$. Then, $\nu^x$ is increasing.
\end{lemma}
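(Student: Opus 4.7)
The plan is to represent $\nu^x$ as an average over the two-element orbits of the swap involution at edge $(x,x+1)$, and then to verify that both the ``top'' and ``bottom'' element of every such orbit respect the partial order. Write $\tau^{(x)}$ for the permutation obtained from $\tau$ by swapping the cards at positions $x$ and $x+1$. From the update rule, the update maps $\tau$ to $\tau$ with probability $\tfrac{1}{2}$ and to $\tau^{(x)}$ with probability $\tfrac{1}{2}$, so
\begin{equation*}
\nu^x(\tau) = \tfrac{1}{2}\nu(\tau) + \tfrac{1}{2}\nu(\tau^{(x)}),
\end{equation*}
which is symmetric on the two-point orbit $\{\tau,\tau^{(x)}\}$. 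Inspection of \eqref{sigtildef} shows that $\widetilde{\tau}$ and $\widetilde{\tau^{(x)}}$ agree in every row $w\ne x$, while in row $w=x$ they differ by $\pm \textbf{1}_{\{\min(\tau(x),\tau(x+1))\le y < \max(\tau(x),\tau(x+1))\}}$. In particular $\tau$ and $\tau^{(x)}$ are comparable under $\ge$; denote by $\tau_\uparrow$ (resp. $\tau_\downarrow$) the larger (resp. smaller) of the two, so that $\nu^x(\tau) = \tfrac{1}{2}(\nu(\tau_\uparrow) + \nu(\tau_\downarrow))$.

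The main step is to show that if $\tau \ge \tau'$ then $\tau_\uparrow \ge \tau'_\uparrow$ and $\tau_\downarrow \ge \tau'_\downarrow$. Since $\widetilde{\tau_\uparrow}$ agrees with $\widetilde{\tau}$ on rows $w\ne x$ (and analogously for $\tau'$), the only nontrivial rows to check are $w=x$. A direct computation from \eqref{sigtildef} yields, for each $y$,
\begin{equation*}
\widetilde{\tau_\uparrow}(x,y) = \widetilde{\tau}(x-1,y) - \tfrac{y}{n} + \max(A,B), \qquad
\widetilde{\tau_\downarrow}(x,y) = \widetilde{\tau}(x-1,y) - \tfrac{y}{n} + \min(A,B),
\end{equation*}
where $A := \textbf{1}_{\{\tau(x)\le y\}}$, $B := \textbf{1}_{\{\tau(x+1)\le y\}}$, and similarly $A',B'$ for $\tau'$. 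Set $S := \widetilde{\tau}(x-1,y)$ and $S' := \widetilde{\tau'}(x-1,y)$. The hypothesis $\tau \ge \tau'$ supplies three inequalities simultaneously: the row-$(x-1)$ bound gives $S\ge S'$; the row-$x$ bound gives $S+A \ge S'+A'$; the row-$(x+1)$ bound gives $S+A+B \ge S'+A'+B'$. A short case check on $(A,B,A',B')\in\{0,1\}^4$ then establishes both $S+\max(A,B) \ge S'+\max(A',B')$ and $S+\min(A,B) \ge S'+\min(A',B')$; the only nontrivial cases are those in which $\max(A,B) < \max(A',B')$ or $\min(A,B) < \min(A',B')$, and in each such case the row-$x$ or row-$(x+1)$ inequality forces the gap $S-S'$ to be large enough to compensate.

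With this order-preservation in hand, the conclusion is immediate: since $\nu$ is increasing and $\tau_\uparrow \ge \tau'_\uparrow$, $\tau_\downarrow \ge \tau'_\downarrow$,
\begin{equation*}
\nu^x(\tau) = \tfrac{1}{2}\bigl(\nu(\tau_\uparrow) + \nu(\tau_\downarrow)\bigr) \ge \tfrac{1}{2}\bigl(\nu(\tau'_\uparrow) + \nu(\tau'_\downarrow)\bigr) = \nu^x(\tau'),
\end{equation*}
so $\nu^x$ is increasing. The main obstacle is the case analysis in the second paragraph: no single one of the three row inequalities suffices, and one must combine them to handle the borderline configurations such as $(A,B)=(0,0)$ paired with $(A',B')=(1,1)$, where the required compensation in $S-S'$ comes from the row-$(x+1)$ constraint.
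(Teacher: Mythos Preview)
Your argument is correct. The paper does not actually prove this lemma; it states it with the remark ``whose proof can be found in \cite{Lacoin}'' and cites it as Proposition~A.1 there. Your proof is essentially the standard one from that reference: the update at $(x,x+1)$ averages $\nu$ over the two-element orbit $\{\tau,\tau^{(x)}\}$, the two orbit members are always comparable, and one checks that both the sort map $\tau\mapsto\tau_\uparrow$ and the reverse-sort map $\tau\mapsto\tau_\downarrow$ are monotone for the order induced by $\widetilde{\sigma}$. Your case analysis is complete; the only places where a single row inequality does not suffice are exactly the ones you flag, namely $(A,B)=(0,0)$ with $\max(A',B')=1$ (handled by the row-$(x{+}1)$ inequality giving $S-S'\ge A'+B'\ge 1$) and $\min(A,B)=0$ with $(A',B')=(1,1)$ (again the row-$(x{+}1)$ inequality gives $S-S'\ge 2-(A+B)\ge 1$). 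So nothing is missing, and the route matches the cited source.
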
 

Furthermore, we introduce two additional tools which will be used in the next subsection: decay estimate of $\widetilde{\sigma}_t$ and the censoring inequality.

\vspace{3mm}
For any fixed $x,y \in [n]$, the average of $\widetilde{\sigma}(x,y)$ over $\mu$ is $0$. We are interested in decay speed of the expected value of $\widetilde{\sigma}_t (x,y)$, which can be described as the following lemma:

\begin{lemma}\label{tildebdlem}
	Let $(\sigma_t)$ denote the CAT shuffle on $[n]$ that starts from an arbitrary initial state  and  let $\delta>0$ be arbitrary. Then there exist $N_\delta, \:\theta_\delta>0$ independent of $n$ such that  for any $n \geq N_\delta $, $x,y \in [n]$, and  $ \theta_\delta n^3 <t \leq n^4$,  we have
	\begin{equation*}
	|\, \mathbb{E} [\widetilde{\sigma}_t (x,y) ]\,| \leq n(1+ O(tn^{-5})) \exp \left(-(1-\delta)\frac{\pi^2}{n^3} t \right).
	\end{equation*} 
\end{lemma}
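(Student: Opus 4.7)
The plan is to reduce the bound on $\mathbb{E}[\widetilde{\sigma}_t(x,y)]$ to a single-card mixing estimate and then prove that estimate by a sine Fourier analysis on $\{0,\ldots,n-1\}$ paralleling the per-macro-step computation of Lemma \ref{1stmlem}.

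First I would reduce to a canonical initial state. By Proposition \ref{monoprop} the monotone coupling gives $\widetilde{\sigma}^{\mathrm{rev}}_t(x,y) \le \widetilde{\sigma}^\xi_t(x,y) \le \widetilde{\sigma}^{\mathrm{id}}_t(x,y)$ pointwise, and by the left-right symmetry of the CAT dynamics the id- and rev-expectations correspond to one another under $x\mapsto n-x$, so it suffices to bound $\mathbb{E}[\widetilde{\sigma}^{\mathrm{id}}_t(x,y)]$, which is nonnegative. Writing $q_t(a)=\sigma_{(n-1)t}^{-1}(a)$, for $\sigma_0=\mathrm{id}$ one has
\[
\mathbb{E}[\widetilde{\sigma}_t(x,y)] \;=\; \sum_{a=1}^{y}\bigl(\mathbb{P}(q_t(a)\le x) - x/n\bigr).
\]
A key observation is that $(q_t(a))_{t\in\mathbb{N}}$ is itself a Markov chain on $\{0,\ldots,n-1\}$: within one macro-step the motion of card $a$ depends only on its starting position and the coin flips at the edges it successively traverses, not on the labels of the other cards. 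Its one-step transition is exactly the jump law (\ref{jumpprob1})--(\ref{jumpprob2}) analyzed in $\S\ref{seconec}$, and its stationary distribution is uniform.

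Second I would quantify the single-card mixing. For $k\in\{1,\ldots,n-1\}$ the function $z\mapsto\sin(\pi k(z+\tfrac12)/n)$ is an approximate eigenfunction: for interior $z$, since the jump $X$ of $\S\ref{seconec}$ is symmetric with $\mathrm{Var}(X)=2$,
\[
\mathbb{E}\bigl[\sin(\pi k(z+X+\tfrac12)/n)\bigr] = \sin(\pi k(z+\tfrac12)/n)\,\mathbb{E}[\cos(\pi k X/n)] = (1-\gamma_k)\sin(\pi k(z+\tfrac12)/n),
\]
where $\gamma_k=\pi^2 k^2/n^2-O(k^4/n^4)$. The boundary rows of the transition matrix (positions within $O(\log n)$ of $0$ or $n-1$) produce a per-step discrepancy of size $O(k/n)$; by the reflected-walk coupling of Lemma \ref{rwcouplem} and the hitting-time tail of Lemma \ref{couptimelem}, a card spends only a controlled fraction of its time near those endpoints, so the discrepancy can be absorbed. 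In analogy with Lemma \ref{1stmlem} this gives
\[
\bigl|\mathbb{E}[\sin(\pi k(q_{t+1}(a)+\tfrac12)/n)\mid\mathcal{F}_t] - (1-\gamma_k)\sin(\pi k(q_t(a)+\tfrac12)/n)\bigr| \le C/n,
\]
and iterating over $t/(n-1)$ macro-steps yields $|\mathbb{E}[\sin(\pi k(q_t(a)+\tfrac12)/n)]| \le (1+O(tn^{-5}))\exp(-(1-\delta)\pi^2 k^2 t/n^3) + O(n/k)$.

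Third I would reassemble. Expanding $\mathbf{1}_{\{z\le x\}}-x/n$ in the sine basis on $\{0,\ldots,n-1\}$ gives Fourier coefficients $c_k^x$ with $|c_k^x|=O(1/k)$, so
\[
\bigl|\mathbb{P}(q_t(a)\le x) - x/n\bigr| \;\le\; \sum_{k=1}^{n-1}|c_k^x|\cdot\bigl|\mathbb{E}[\sin(\pi k(q_t(a)+\tfrac12)/n)]\bigr|.
\]
The $k=1$ mode contributes $C(1+O(tn^{-5}))\exp(-(1-\delta)\pi^2 t/n^3)$ and dominates throughout $\theta_\delta n^3<t\le n^4$; the sum over $k\ge 2$ is summable and subdominant. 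Summing over $a=1,\ldots,y\le n$ produces the claimed bound $n(1+O(tn^{-5}))\exp(-(1-\delta)\pi^2 t/n^3)$.

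The main obstacle I expect is the per-macro-step Fourier decay of the second step, uniformly in $k$, and in particular the rigorous absorption of the boundary errors into the $(1+O(tn^{-5}))$ prefactor out to times of order $n^4$. This is exactly where the machinery of $\S\ref{seconec}$ is indispensable: the coupling of $q_t(a)$ with the reflected random walk and the resulting hitting-time tail supply the quantitative control on how often a card visits the vicinity of the boundary, which is what lets the clean spectral-gap-type rate $\pi^2/n^3$ per swap survive many iterations.
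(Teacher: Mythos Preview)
Your reduction to $\sigma_0=\mathrm{id}$ via monotonicity and the observation that $(q_t(a))_t$ is itself a Markov chain are both correct. The gap is in Step~2. Iterating an approximate eigenfunction relation
\[
\bigl|\mathbb{E}[\sin(\pi k(q_{t+1}(a)+\tfrac12)/n)\mid\mathcal{F}_t] - (1-\gamma_k)\sin(\pi k(q_t(a)+\tfrac12)/n)\bigr| \;\le\; Ck/n
\]
over $t/(n-1)$ macro-steps produces an accumulated additive error of size $(Ck/n)/\gamma_k = \Theta(n/k)$. For the dominant mode $k=1$ this is $\Theta(n)$, whereas $|\sin|\le 1$, so the bound is vacuous; you in fact write the term $+\,O(n/k)$ and then silently drop it in Step~3. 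The appeal to ``a card spends only a controlled fraction of its time near those endpoints'' does not rescue this: even at stationarity the card is within $O(1)$ of an endpoint with probability $\Theta(1/n)$, which still gives an expected per-step error $\Theta(k/n^2)$ and hence accumulated error $\Theta(1/k)$. And indeed $\mathbb{E}_\mu[\sin(\pi k(q+\tfrac12)/n)]=\Theta(1/k)$ for odd $k$, so these sines do not decay to zero under the dynamics at all. This is exactly why Lemma~\ref{1stmlem} is only useful in \S\ref{seclowerbd} against the large initial value $\Phi_0\geq n^2/(8\sqrt{2})$; for quantities bounded by $1$ the same scheme gives nothing.

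The paper avoids this entirely by not iterating an approximate relation. It sets $\bar v_s(x)=\mathbb{E}[\widetilde\sigma_{(n-1)s}(x,y)]$, writes down the exact linear recursion (\ref{vbareq1})--(\ref{vbareq2}) for $\bar v$, replaces it by a majorant $u_s\ge\bar v_s$ satisfying the clean recursion (\ref{ueq1}), and then passes to the discrete derivative $d_s(x)=u_s(x)-u_s(x-1)$. The point is that $d_{s+1}=Kd_s$ where $K$ is \emph{exactly} the sub-Markov kernel of the random walk with increment $X$ from \S\ref{seconec}, killed upon exiting $[n]$, with no additive error whatsoever. Then $\|d_s\|_1$ is literally a survival probability $\mathbb{P}(\widehat\tau_n>s)$, and the exponential decay at rate $\pi^2/n^2$ per macro-step follows from Donsker's theorem as in Corollary~\ref{couptimecor}; finally $\|u_s\|_\infty\le\|d_s\|_1$ and $\|d_0\|_\infty\le 1$ give the stated bound. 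The machinery of \S\ref{seconec} enters through this exit-time estimate, not through any occupation-time control near the boundary.
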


\begin{remark}
	\textnormal{The random AT shuffle version of Lemma \ref{tildebdlem} is discussed in \cite{Lacoin}, Lemma 4.1. In the random AT shuffle, this is proven by a direct computation of the eigenvalues and eigenvectors of the simple random walk. In the present context, such method seems extremely difficult to be applied because the model is more complicated. Instead, we choose an alternative approach, based on the ideas similar to  Lemmas  \ref{couptimelem} and \ref{wilsoncouptimelem}. Due to its technicality, we defer the proof of Lemma \ref{tildebdlem} to \S\ref{subsectildebd}.}
\end{remark}

\vspace{3mm}

In \cite{PW13}, Peres and Winkler proved the censoring inequality for the  Glauber dynamics on monotone spin systems. The message of this inequality is that ignoring updates can only slow down the mixing. In \cite{Lacoin}, Lacoin extended the inequality to the random AT shuffle. It turns out that in the CAT shuffle, the censoring inequality is still true.

To formalize, a \textit{censoring scheme} is a function $\mathcal{C} : \mathbb{N} \rightarrow \mathcal{P}([n-1])$ that is interpreted as follows:
\begin{center}
	$\bullet$ At each time $t$, the edge $(x,x+1)$ that we are about to update is ignored if and only if $x \in \mathcal{C}(t)$.
\end{center}
Let $P_\nu^t$ be the probability distribution of the CAT shuffle at time $t$ with initial distribution $\nu$, and let  $P_{\nu, \mathcal{C}}^t$ denote the distribution of the CAT shuffle at time $t$  which has performed  the censoring dynamics according to $\mathcal{C}$ while started from the same distribution $\nu$.  Intuitively, the updates are the triggers that carry the chain to its equilibrium, so one might guess that the censored dynamics is further away from the equilibrium than the original one. This intuition turns
out to be true for the CAT shuffle due to monotonicity of the system, as long as we start from an ``increasing'' initial distribution $\nu$. \cite{Lacoin} and \cite{PW13} describe this phenomenon as follows.

\begin{proposition}[The censoring inequality]
	Let $\nu$ be an increasing probability distribution on $S_n$. For any censoring scheme $\mathcal{C}: \mathbb{N} \rightarrow \mathcal{P}([n-1])$ and any $t\geq 0$, we have
	\begin{equation}\label{cenineq}
	|| P_{\nu}^t - \mu ||_{T.V.} \leq || P_{\nu, \mathcal{C}}^t -\mu ||_{T.V.}.
	\end{equation}
	In particular, (\ref{cenineq}) holds for the starting distribution $\nu = \delta_{\id}$, the point mass at the identity.
\end{proposition}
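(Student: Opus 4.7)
The plan is to reduce the claim to the Peres--Winkler censoring inequality \cite{PW13}, which applies to any monotone spin system whose single-site updates preserve the class of increasing measures. Lacoin \cite{Lacoin} already carried out such a reduction for the random AT shuffle, and essentially the same argument will work for the CAT shuffle, since the censoring scheme only modifies whether a prescribed edge-update is performed and does not alter the monotone structure of the update rule.

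The two inputs needed from the CAT-shuffle side are both already in place. First, Proposition \ref{monoprop} provides a grand coupling under which the partial order $\leq$ on $S_n$ defined via the functions $\widetilde{\sigma}$ is preserved by all edge-updates. Second, Lemma \ref{increasinglem} ensures that any single edge-update maps the cone of increasing probability measures into itself. A straightforward induction on $t$ then gives that whenever $\nu$ is increasing, both $P_\nu^t$ and $P_{\nu,\mathcal{C}}^t$ remain increasing for every $t\geq 0$. A useful consequence is the identity
\begin{equation*}
\|\rho' - \mu\|_{T.V.} = \max_{A \textnormal{ up-set}} \bigl[\rho'(A) - \mu(A)\bigr],
\end{equation*}
valid for any increasing probability measure $\rho'$, since the density $\rho'/\mu$ is itself an increasing function and hence its super-level set $\{\rho'/\mu \geq 1\}$ is an up-set attaining the maximum.

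With these ingredients in hand, the proof of the censoring inequality proceeds by induction on the number of censored updates. At the first censored time $s$ the two dynamics share a common increasing distribution $\rho$, and the remaining task is to show that performing the extra edge-update at time $s$ before applying the common tail of subsequent updates only decreases the resulting distance $\|\cdot -\mu\|_{T.V.}$. The main obstacle is that the update operators do not commute, so a naive one-step $L^1$-contraction bound at time $s$ is insufficient; one must carry the comparison through the entire tail of updates. This is precisely where the up-set reformulation above, combined with the repeated preservation of increasingness coming from Lemma \ref{increasinglem}, enters, and constitutes the core of the Peres--Winkler argument, which applies verbatim in our setting. For the particular initialization $\nu=\delta_{\id}$, the identity element is the maximum of $\leq$ on $S_n$, so $\delta_{\id}$ is trivially increasing and the inequality applies.
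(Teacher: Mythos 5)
Your proposal is correct and follows essentially the same route as the paper, which itself omits the proof and simply defers to the Peres--Winkler censoring inequality \cite{PW13} and Lacoin's adaptation \cite{Lacoin}, relying on exactly the monotonicity inputs you cite (Proposition \ref{monoprop} and Lemma \ref{increasinglem}). Your up-set reformulation and preservation-of-increasingness observations are the standard ingredients of that argument, so nothing further is needed beyond what the paper already references.
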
 

\noindent We omit the proof of the censoring inequality. A proof can be found either in \cite{Lacoin} or in \cite{PW13}.

\subsection{Proof of Theorem \ref{main}, Part (b)}\label{subsecpfmain2}
In this subsection we prove the second part of Theorem $\ref{main}$. Implementing the ingredients we obtained in the previous sections, the proof follows similarly as in the case of random AT shuffle \cite{Lacoin}. 

To this end, we first explain the projection of measures on $S_n$ which serves as a pretty tool to understand the mixing clearly. After that we describe the main ideas of the proof. Some of the details will be presented at Appendix.

Let $K$ be a fixed integer and define $x_i := \lfloor \frac{in}{K} \rfloor$ for all $i=0,1,\ldots,n$. Following the notations in \cite{Lacoin}, we define the functions $\widehat{\sigma}$ and $\bar{\sigma}$ for each $\sigma \in S_n$ and the sets $\widehat{S}_n$, $\bar{S}_n$ by
\begin{equation*}
\begin{split}
&\widehat{\sigma} : [n] \times [K] \rightarrow \mathbb{R}, \;~~~\widehat{\sigma}(x,j) := \widetilde{\sigma}(x, x_j), ~~~~\widehat{S}_n := \{ \widehat{\sigma}: \sigma \in S_n \};\\
&\bar{\sigma} : [K] \times [K] \rightarrow \mathbb{R}, ~~~\bar{\sigma}(i,j) := \widetilde{\sigma}(x_i, x_j), ~~~~\bar{S}_n := \{ \bar{\sigma}: \sigma \in S_n \}.
\end{split}
\end{equation*}

That is, we are intentionally forgetting information from $\widetilde{\sigma}$ by projecting it to a smaller domain.  For a probability measure $\nu$ on $S_n$, we similarly define the measure $\widehat{\nu}$ (resp. $\bar{\nu}$) on $\widehat{S}_n$ (resp. $\bar{S}_n$) by 
\begin{equation*}
\widehat{\nu}(\widehat{\sigma}) := \sum_{\xi: \;\widehat{\xi} = \widehat{\sigma}}\nu(\xi);
~~~~
\bar{\nu}(\bar{\sigma}) := \sum_{\xi: \;\bar{\xi} = \bar{\sigma}}\nu(\xi).
\end{equation*}

Furthermore, we introduce one more notation which is closely related to the projection $\widehat{\nu}$. Let $T_n$ be the subset of $S_n$ defined as
\begin{equation*}
T_n := \{\sigma \in S_n : \; \sigma(\{x_{i-1}+1, \ldots, x_i \}) = \{x_{i-1}+1 , \ldots , x_i \} ~~ \textnormal{for all } i\in [K] \}.
\end{equation*} 

\noindent It is clear that $|T_n|  = \prod_{i=1}^{K} (\Delta x_i)!$, where $\Delta x_i := x_i - x_{i-1}$. For a probability measure $\nu$ on $S_n$, the probability measure $\nu^{u}$ is defined by
\begin{equation*}
\nu^u ( \sigma) := \frac{1}{|T_n|} \sum_{\tau \in T_n} \nu (\tau \circ \sigma).
\end{equation*}

\noindent Therefore, $\nu^u$ becomes an invariant measure under composing an element of $T_n$. In other words,
it is locally uniformized in the sense that permuting the label $\sigma(x) \in (x_{i-1}, x_i]$ within the same interval $(x_{i-1},x_i]$ does not affect its probability.
In addition, note that for any $\sigma\in S_n$ and $\tau \in T_n$, $\widehat{\sigma} = \widehat{\tau \circ \sigma}$. Based on this observation, we can deduce a connection between $\widehat{\nu}$ and $\nu^u$.

\begin{lemma}[\cite{Lacoin}, Lemma 4.3]\label{hatulem}
	Let $\mu$ denote the uniform measure on $S_n$. For any probability measure $\nu$ on $S_n$, 
	\begin{equation*}
	||\widehat{\nu} - \widehat{\mu} ||_{T.V.} =
	||\nu^u - \mu||_{T.V.}.
	\end{equation*}
\end{lemma}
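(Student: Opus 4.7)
The plan is to identify the fibers of the projection $\sigma \mapsto \widehat{\sigma}$ with the left cosets of the subgroup $T_n$ inside $S_n$, after which both sides of the claimed equality reduce to the same sum of absolute differences over those cosets. Concretely, I would show that $\widehat{\nu}(\widehat{\sigma})$ is the $\nu$-mass of the coset $T_n\sigma$, while $\nu^u$ redistributes that mass uniformly over the $|T_n|$ elements of the coset. A routine total variation computation then forces both norms to equal $\frac{1}{2} \sum_C |\nu(C) - \mu(C)|$, where $C$ ranges over the left cosets of $T_n$.

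First I would establish that the fibers of $\widehat{\cdot}$ are exactly the left cosets $T_n\sigma$. The forward direction is already noted in the paragraph preceding the lemma: if $\tau \in T_n$, then $\tau$ permutes each block $I_j := \{x_{j-1}+1, \ldots, x_j\}$ within itself, so $\tau(\sigma(z)) \leq x_j$ iff $\sigma(z) \leq x_j$, and plugging into (\ref{sigtildef}) gives $\widehat{\tau \circ \sigma} = \widehat{\sigma}$. For the converse, if $\widehat{\sigma_1} = \widehat{\sigma_2}$, then taking first differences in the $x$-variable yields $\textbf{1}_{\{\sigma_1(x) \leq x_j\}} = \textbf{1}_{\{\sigma_2(x) \leq x_j\}}$ for every $x, j$, so $\sigma_1(x)$ and $\sigma_2(x)$ always lie in the same block $I_j$. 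This forces $\sigma_2 \circ \sigma_1^{-1}$ to preserve every $I_j$ and therefore to belong to $T_n$.

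Once the fiber identification is in place, the proof becomes a bookkeeping exercise. By definition $\widehat{\nu}(\widehat{\sigma}) = \nu(T_n\sigma)$ and similarly for $\widehat{\mu}$, so
\[
\|\widehat{\nu} - \widehat{\mu}\|_{T.V.} = \frac{1}{2} \sum_C |\nu(C) - \mu(C)|.
\]
On the other hand, a change of summation index shows that $\nu^u(\sigma) = |T_n|^{-1} \nu(T_n\sigma)$ is constant on each coset $C$, as is $\mu$; summing over the $|T_n|$ members of each coset gives
\[
\|\nu^u - \mu\|_{T.V.} = \frac{1}{2}\sum_C |T_n| \left|\frac{\nu(C)}{|T_n|} - \frac{\mu(C)}{|T_n|}\right| = \frac{1}{2}\sum_C |\nu(C)-\mu(C)|,
\]
which matches the previous display. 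I do not foresee any serious obstacle here; the argument is essentially bookkeeping, and the only mildly non-trivial step is the converse half of the fiber identification, which the first-difference argument above handles.
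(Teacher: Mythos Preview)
Your proposal is correct and follows essentially the same approach as the paper: both identify the fibers of $\sigma \mapsto \widehat{\sigma}$ with the left cosets $T_n\sigma$, observe that $\nu^u$ is constant on each such coset, and then reduce both total variation distances to the same coset sum. Your write-up is in fact slightly more complete than the paper's, since you spell out the converse half of the fiber identification (via the first-difference argument), which the paper's proof uses implicitly when asserting that $\nu^u$ is constant on each fiber.
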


\begin{proof}
	The lemma readily follows from the above observation. Since $\nu^u$ is constant on $\{\sigma: \widehat{\sigma} = \widehat{\xi} \}$ for each fixed $\widehat{\xi} \in \widehat{S}_n$, we have
	\begin{equation*}
	\begin{split}
	\sum_{\sigma} |\nu^u (\sigma) - \mu(\sigma)|
	&= \sum_{\widehat{\xi} \in \widehat{S}_n} \;\,
	 \left| \sum_{\sigma:\;\widehat{\sigma} = \widehat{\xi}}
	\Big( {\nu^u}({\sigma}) - \mu(\sigma) \Big) \right| \\
	&=
	\sum_{\widehat{\xi} \in \widehat{S}_n} \;\left|
	  \sum_{\sigma:\;\widehat{\sigma} = \widehat{\xi}}
	\Big( {\nu}({\sigma}) - \mu(\sigma) \Big)  \right|
	=
	\sum_{\widehat{\xi} \in \widehat{S}_n} | \widehat{\nu}(\widehat{\xi}) - \widehat{\mu}(\widehat{\xi}) |.
	\end{split}
	\end{equation*}
\end{proof}

In order to establish the main theorem, we will introduce a censoring scheme $\mathcal{C}$, and show that the censored dynamics indeed mixes in the desired time, and hence impying the mixing of the original chain by the censoring inequality. We follow \cite{Lacoin} for the construction of $\mathcal{C}$, while the proofs for each step rely on different ingredients to fit with the CAT shuffle. 

Let $\eta>0$ be a small fixed constant, set $K : = \lfloor \eta^{-1} \rfloor$ and let $x_i := \lfloor in/K\rfloor$ as before. Define the censoring scheme $\mathcal{C}: \mathbb{N} \rightarrow \mathcal{P}([n-1])$ by
\begin{equation*}
\mathcal{C}(t) =
\begin{cases}
\{x_i :\; i\in [K-1] \} , & \textnormal{if }\; t\in[0,t_1] \cup [t_2,t_3]; \\
~~\emptyset & \textnormal{if }\; t \in (t_1,t_2),
\end{cases}
\end{equation*}

\noindent where the times $t_1, t_2$ and $t_3$ are given by
\begin{equation*}
t_1:= \left(\frac{\eta}{3}\right) \frac{n^3}{2\pi^2} \log n;
~~~~
t_2:=\left( 1+\frac{2\eta}{3} \right) \frac{n^3}{2\pi^2} \log n;
~~~~
t_3:=\left( 1+\eta \right) \frac{n^3}{2\pi^2} \log n.
\end{equation*}
In other words, in the first and the third steps, we ignore the updates happening at edges $(x_i, x_i+1)$ for all $i\in[K-1]$, while running the chain without censoring in the second phase. Thus, in the first and the third steps, the chain operates separately at each interval $(x_{i-1}, x_i]$, while being dependent from each other since they share the directions of exploration $\{\textbf{c}_l \}$. What happens in the censored shuffle can intuitively be described as follows (also see Figure 3):

\begin{enumerate}
	\item [$(1)$] At time $t_1$, the cards in the same interval $(x_{i-1},x_i]$ are distributed nearly uniformly, hence becoming indistinguishable. Thus, we can label all cards in $(x_{i-1},x_i]$ by the \textit{same index} $i$. 
	\item [$(2)$] At time $t_2$, cards with different indices get mixed, and each interval $(x_{i-1}, x_i]$ contains approximately equal number of cards of index $j$ for all $j$. However, the locations within $(x_{i-1},x_i]$ of the cards of different indices might not be uniform.
	\item [$(3)$]  After time $t_3$, within each interval $(x_{i-1},x_i]$, the placement of cards of different indices become almost uniform.
\end{enumerate}

Let us denote the uniform measure by $\mu$ as before, and define
\begin{equation*}
\nu_t : = P^t_{\id, \mathcal{C}},
\end{equation*} 
the probability distribution of the censored CAT shuffle at time $t$ under the censoring scheme $\mathcal{C}$ which started from the initial state $\id$. Then by Lemma \ref{hatulem}, 
\begin{equation}\label{tvuhatineq}
|| \nu_t - \mu ||_{T.V.} \leq || \nu_t - \nu_t^u ||_{T.V.} + || \widehat{\nu}_t - \widehat{\mu}||_{T.V.}.
\end{equation} 

\noindent Having (\ref{tvuhatineq}) in mind, we will establish mixing in terms of $||\nu_t -\nu_t^u||$ and $||\widehat{\nu}_t - \widehat{\mu} ||$ as follows.

\begin{figure}
	\centering
	\begin{tikzpicture}[thick,scale=.7, every node/.style={transform shape}]
	\foreach \x in {0,...,17}{
		\filldraw[black] (\x,0) circle (1pt);
		\filldraw[black] (\x,-2.5) circle (1pt);
		\filldraw[black] (\x,-5) circle (1pt);
		\filldraw[black] (\x,-7.5) circle (1pt);
	}
	
	\filldraw[red] (0,0+0.1) circle (0pt) node[red, anchor=south] {\Large{$a_{1} $}};
	\filldraw[red] (1,0+0.1) circle (0pt) node[red, anchor=south] {\Large{$a_{2} $}};
	\filldraw[red] (2,0+0.1) circle (0pt) node[red, anchor=south] {\Large{$a_{3} $}};
	\filldraw[red] (3,0+0.1) circle (0pt) node[red, anchor=south] {\Large{$a_{4} $}};
	\filldraw[red] (4,0+0.1) circle (0pt) node[red, anchor=south] {\Large{$a_{5} $}};
	\filldraw[red] (5,0+0.1) circle (0pt) node[red, anchor=south] {\Large{$a_{6} $}};
	\filldraw[blue] (6,0+0.1) circle (0pt) node[blue, anchor=south] {\Large{$b_{1} $}};
	\filldraw[blue] (7,0+0.1) circle (0pt) node[blue, anchor=south] {\Large{$b_{2} $}};
	\filldraw[blue] (8,0+0.1) circle (0pt) node[blue, anchor=south] {\Large{$b_{3} $}};
	\filldraw[blue] (9,0+0.1) circle (0pt) node[blue, anchor=south] {\Large{$b_{4} $}};
	\filldraw[blue] (10,0+0.1) circle (0pt) node[blue, anchor=south] {\Large{$b_{5} $}};
	\filldraw[blue] (11,0+0.1) circle (0pt) node[blue, anchor=south] {\Large{$b_{6} $}};
	
	\filldraw[green] (12,0+0.1) circle (0pt) node[green, anchor=south] {\Large{$c_{1} $}};
	\filldraw[green] (13,0+0.1) circle (0pt) node[green, anchor=south] {\Large{$c_{2} $}};
	\filldraw[green] (14,0+0.1) circle (0pt) node[green, anchor=south] {\Large{$c_{3} $}};
	\filldraw[green] (15,0+0.1) circle (0pt) node[green, anchor=south] {\Large{$c_{4} $}};
	\filldraw[green] (16,0+0.1) circle (0pt) node[green, anchor=south] {\Large{$c_{5} $}};
	\filldraw[green] (17,0+0.1) circle (0pt) node[green, anchor=south] {\Large{$c_{6} $}};
	\foreach \x in {0,...,5}{
		\filldraw[red] (\x,-2.5+0.1) circle (0pt) node[red, anchor=south]{\Large{$a$}};
		\filldraw[blue] (\x+6,-2.5+0.1) circle (0pt) node[blue, anchor=south]{\Large{$b$}};
		\filldraw[green] (\x+12,-2.5+0.1) circle (0pt) node[green, anchor=south]{\Large{$c$}};
	}
	\foreach \x in {0,1}{
		\filldraw[red] (\x,-5+0.1) circle (0pt) node[red, anchor=south]{\Large{$a$}};
		\filldraw[red] (\x+6,-5+0.1) circle (0pt) node[red, anchor=south]{\Large{$a$}};
		\filldraw[red] (\x+12,-5+0.1) circle (0pt) node[red, anchor=south]{\Large{$a$}};
		\filldraw[blue] (\x+2,-5+0.1) circle (0pt) node[blue, anchor=south]{\Large{$b$}};
		\filldraw[blue] (\x+8,-5+0.1) circle (0pt) node[blue, anchor=south]{\Large{$b$}};
		\filldraw[blue] (\x+14,-5+0.1) circle (0pt) node[blue, anchor=south]{\Large{$b$}};
		\filldraw[green] (\x+4,-5+0.1) circle (0pt) node[green, anchor=south]{\Large{$c$}};
		\filldraw[green] (\x+10,-5+0.1) circle (0pt) node[green, anchor=south]{\Large{$c$}};
		\filldraw[green] (\x+16,-5+0.1) circle (0pt) node[green, anchor=south]{\Large{$c$}};
	}	
	
	\filldraw[red] (0,-7.5+0.1) circle (0pt) node[red, anchor=south]{\Large{$a$}};
	\filldraw[red] (4,-7.5+0.1) circle (0pt) node[red, anchor=south]{\Large{$a$}};
	\filldraw[red] (8,-7.5+0.1) circle (0pt) node[red, anchor=south]{\Large{$a$}};
	\filldraw[red] (10,-7.5+0.1) circle (0pt) node[red, anchor=south]{\Large{$a$}};
	\filldraw[red] (14,-7.5+0.1) circle (0pt) node[red, anchor=south]{\Large{$a$}};
	\filldraw[red] (16,-7.5+0.1) circle (0pt) node[red, anchor=south]{\Large{$a$}};
	
	\filldraw[blue] (1,-7.5+0.1) circle (0pt) node[blue, anchor=south]{\Large{$b$}};
	\filldraw[blue] (2,-7.5+0.1) circle (0pt) node[blue, anchor=south]{\Large{$b$}};
	\filldraw[blue] (7,-7.5+0.1) circle (0pt) node[blue, anchor=south]{\Large{$b$}};
	\filldraw[blue] (11,-7.5+0.1) circle (0pt) node[blue, anchor=south]{\Large{$b$}};
	\filldraw[blue] (12,-7.5+0.1) circle (0pt) node[blue, anchor=south]{\Large{$b$}};
	\filldraw[blue] (17,-7.5+0.1) circle (0pt) node[blue, anchor=south]{\Large{$b$}};	
	
	\filldraw[green] (3,-7.5+0.1) circle (0pt) node[green, anchor=south]{\Large{$c$}};
	\filldraw[green] (5,-7.5+0.1) circle (0pt) node[green, anchor=south]{\Large{$c$}};
	\filldraw[green] (6,-7.5+0.1) circle (0pt) node[green, anchor=south]{\Large{$c$}};
	\filldraw[green] (9,-7.5+0.1) circle (0pt) node[green, anchor=south]{\Large{$c$}};
	\filldraw[green] (13,-7.5+0.1) circle (0pt) node[green, anchor=south]{\Large{$c$}};
	\filldraw[green] (15,-7.5+0.1) circle (0pt) node[green, anchor=south]{\Large{$c$}};
	
	\draw[thick, ->] (8.5,-0.5) -- (8.5,-1.5);
	\draw[thick, ->] (8.5,-3) -- (8.5,-4);
	\draw[thick, ->] (8.5,-5.5) -- (8.5,-6.5);
	\draw[thick] (0,0) -- (5,0);
	\draw[thick] (6,0) -- (11,0);
	\draw[thick] (12,0) -- (17,0);
	
	\draw[thick] (0,-2.5) -- (5,-2.5);
	\draw[thick] (12,-2.5) -- (17,-2.5);
	\draw[thick] (6,-2.5) -- (11,-2.5);
	\draw[thick] (0,-5) -- (17,-5);
	\draw[thick] (0,-7.5) -- (5,-7.5);
	\draw[thick] (6,-7.5) -- (11,-7.5);
	\draw[thick] (12,-7.5) -- (17,-7.5);
	
	\draw[dashed] (5,0) -- (6,0);
	\draw[dashed] (11,0) -- (12,0);
	\draw[dashed] (5,-2.5) -- (6,-2.5);
	\draw[dashed] (11,-2.5) -- (12,-2.5);
	\draw[dashed] (5,-7.5) -- (6,-7.5);
	\draw[dashed] (11,-7.5) -- (12,-7.5);
	
	\foreach \x in {0,1,2,3}{
		\filldraw[black] (0,-2.5*\x-0.2) circle (0pt) node[anchor=north] {\LARGE$1$};
		\filldraw[black] (5,-2.5*\x-0.2) circle (0pt) node[anchor=north] {\LARGE$x_1$};
		\filldraw[black] (11,-2.5*\x-0.2) circle (0pt) node[anchor=north] {\LARGE$x_2$};
		\filldraw[black] (17,-2.5*\x-0.2) circle (0pt) node[anchor=north] {\LARGE$x_3$};
	}

	\filldraw[black] (17.5,0) circle (0pt) node[anchor=west] {\LARGE$t=0$};
	\filldraw[black] (17.5,-2.5) circle (0pt) node[anchor=west] {\LARGE$t=t_1$};
	\filldraw[black] (17.5,-5) circle (0pt) node[anchor=west] {\LARGE$t=t_2$};
	\filldraw[black] (17.5,-7.5) circle (0pt) node[anchor=west] {\LARGE$t=t_3$};
	\end{tikzpicture}
	\caption{An illustration of mixing divided into three steps. Dashed edges indicate the censored updates during the first and the third phases.}
\end{figure}
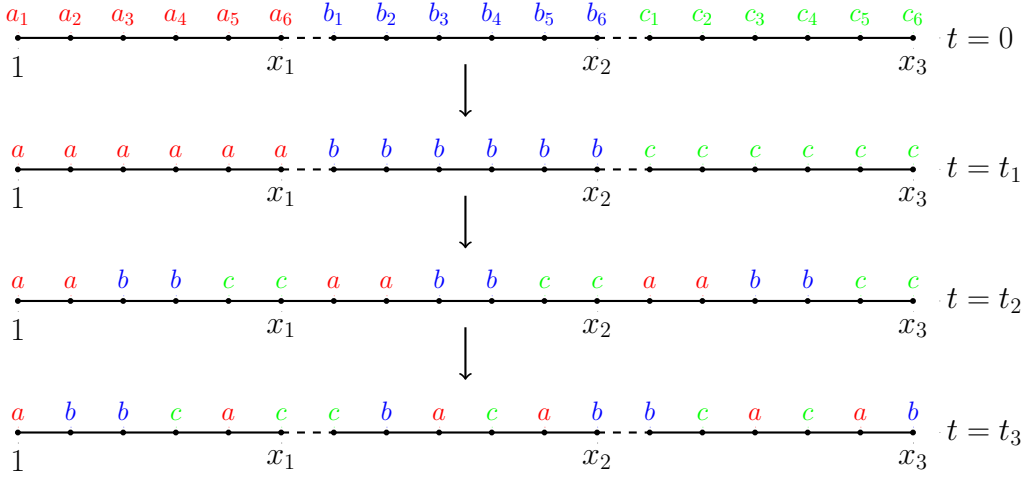

\begin{proposition}\label{t1prop}
	For any given $\eta, \varepsilon >0$, the following holds for all large enough $n$ and all $t>t_1$:
	\begin{equation*}
	|| \nu_t - \nu_t^u ||_{T.V. }\leq \varepsilon/3.
	\end{equation*} 
\end{proposition}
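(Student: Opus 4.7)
My plan is to control $\|\nu_t - \nu_t^u\|_{T.V.}$ at the specific time $t = t_1$ and then propagate the bound to all $t > t_1$ via a commutation argument. For the propagation, I would observe that the uniformization operator $U : \nu \mapsto \nu^u$ commutes with every one-step transition kernel of the censored chain. Indeed, each update is a right-multiplication by either the identity or an adjacent transposition $s_e$, and right-multiplication trivially commutes with the left action of any $\tau \in T_n$. Writing $U\nu = |T_n|^{-1}\sum_{\tau \in T_n}\tau\cdot\nu$, where $(\tau\cdot\nu)(\sigma) := \nu(\tau^{-1}\sigma)$, this gives $UP_s = P_s U$ for every step kernel $P_s$, censored or not. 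Letting $P_{t_1 \to t}$ denote the composition of the step kernels from time $t_1+1$ to $t$, one obtains $\nu_t^u = U(\nu_{t_1} P_{t_1 \to t}) = \nu_{t_1}^u P_{t_1 \to t}$, and the standard contractivity of Markov kernels in total variation yields $\|\nu_t - \nu_t^u\|_{T.V.} \le \|\nu_{t_1} - \nu_{t_1}^u\|_{T.V.}$ for all $t \ge t_1$.

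It thus remains to bound $\|\nu_{t_1} - \nu_{t_1}^u\|_{T.V.}$. During the first phase $[0, t_1]$ the censoring of the $K-1$ boundary edges $(x_i, x_i+1)$ completely decouples the chain: the labels initially inside $I_i := (x_{i-1}, x_i]$ stay in $I_i$ throughout, and $\sigma_t|_{I_i}$ evolves as a CAT shuffle on $\Delta x_i \sim \eta n$ cards. The scan directions $\{\mathbf{c}_\ell\}$ are shared across intervals, but conditional on them the $K$ restricted chains are independent, so I would combine subadditivity of TV under products with convexity of TV under averaging the $\{\mathbf{c}_\ell\}$ to get
\[
\|\nu_{t_1} - \nu_{t_1}^u\|_{T.V.} \;\le\; \sum_{i=1}^K \mathbb{E}\bigl[\|\mathcal{L}(\sigma_{t_1}|_{I_i}) - \mu_{I_i}\|_{T.V.}\bigr],
\]
where $\mu_{I_i}$ is the uniform distribution on permutations of the labels of $I_i$ and the expectation is over $\{\mathbf{c}_\ell\}$.

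I would handle each summand by invoking a crude $O(m^3 \log m)$ TV mixing bound for the $m$-card CAT shuffle with any (possibly deterministic) sequence of scan directions. Such a bound can be derived by replaying the argument of \S\ref{subsecpfmain2} at scale $m = \Delta x_i$: apply Lemma~\ref{tildebdlem} at that scale, combine it with the monotone coupling and censoring inequality of \S\ref{subsecmono}, and pass from first-moment decay to TV via a standard $L^2$ comparison. Since $t_1 = \Theta(\eta\, n^3 \log n)$ while $m^3 \log m = \Theta(\eta^3 n^3 \log n)$, the ratio is of order $\eta^{-2}$, which is arbitrarily large for small $\eta$, so each summand is $o(1)$ as $n \to \infty$ and the sum is at most $\varepsilon/3$ for all large $n$.

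The main technical obstacle will be in the previous paragraph: promoting Lemma~\ref{tildebdlem} (a first-moment statement) into a genuine TV mixing bound at scale $m$ that is uniform over the realization of $\{\mathbf{c}_\ell\}$. The symmetry between the two scan directions is lost once one conditions on $\{\mathbf{c}_\ell\}$, so I cannot simply invoke Theorem~\ref{main}(b) inductively at smaller scales; instead I would rerun the upper-bound argument for a CAT-type shuffle with an arbitrary fixed direction sequence, leveraging the fact that the proof of Lemma~\ref{tildebdlem} itself does not depend on direction-randomization.
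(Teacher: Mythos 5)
Your reduction steps are fine: the commutation of the label-uniformization $\nu\mapsto\nu^u$ (a left action by $T_n$) with the position-update kernels (right multiplications), plus TV contraction, correctly propagates a bound at $t_1$ to all $t>t_1$ (the paper leaves this implicit), and conditioning on $\{\textbf{c}_\ell\}$ to decouple the blocks and then using subadditivity and convexity of TV is legitimate. The genuine gap is at the central step: you never actually prove that each block is close to uniform by time $t_1$; you delegate it to ``replaying \S\ref{subsecpfmain2} at scale $m=\Delta x_i$.'' That argument is circular as stated, because the upper-bound machinery of \S\ref{subsecpfmain2} itself invokes Propositions \ref{t1prop} and \ref{t3prop} (at the relevant scale) --- i.e.\ the very statement you are proving --- so at minimum you would need a multi-scale induction with a base case, which you do not set up. Two further points in that step are shaky: (i) your claim that the proof of Lemma \ref{tildebdlem} ``does not depend on direction-randomization'' is false --- the recursion (\ref{vbareq1})--(\ref{vbareq2}) is obtained precisely by averaging the two sweep directions with probability $1/2$ each, so after conditioning on an arbitrary realization of $\{\textbf{c}_\ell\}$ the lemma does not apply as proved; and (ii) ``pass from first-moment decay to TV via a standard $L^2$ comparison'' is exactly the step that is unavailable for this time-inhomogeneous chain; in the paper the passage from Lemma \ref{tildebdlem} to TV requires the full censoring/projection apparatus together with Lemma \ref{smallmeanlem}, not an $L^2$ argument.

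The paper avoids all of this by never needing a TV mixing result at scale $m$. Since the boundary edges are censored on $[0,t_1]$ and $\sigma_0=\id$, one has $\sigma_t\in T_n$ for $t\le t_1$, so $\nu_{t_1}^u=\textbf{1}_{\id}^u$, which is \emph{stationary} for the censored dynamics during the first phase. Hence $\Vert\nu_{t_1}-\nu_{t_1}^u\Vert_{T.V.}\le\max_{\tau,\tau'\in T_n}\mathbb{P}(\sigma_{t_1}^{\tau}\neq\sigma_{t_1}^{\tau'})$, and this coalescence probability is bounded by an explicit card-by-card coupling (same exploration direction in both decks; opposite moves when the relevant cards face each other, identical moves otherwise), under which the coupling time of card $a$ is dominated by its hitting time of the right endpoint $x_i$ of its block. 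Lemma \ref{couptimelem}, applied on an interval of length about $\eta n$, gives an exponential tail for this hitting time, and a union bound over the $n$ cards yields a failure probability of order $n^{1-c/\eta}\to 0$ because $t_1=\Theta(\eta n^3\log n)$. If you want to salvage your outline, replace the ``replay \S\ref{subsecpfmain2} at scale $m$'' step by this stationarity-plus-coupling argument (which, note, is already uniform over the realization of the direction sequence), rather than attempting a genuine mixing bound for fixed-direction CAT-type shuffles.
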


\begin{proposition}\label{t3prop}
	For any given $\eta, \varepsilon >0$, the following holds for all large enough $n$:
	\begin{equation*}
	|| \widehat{\nu}_{t_3} - \widehat{\mu} ||_{T.V. }\leq 2\varepsilon/3.
	\end{equation*} 
\end{proposition}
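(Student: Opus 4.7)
The plan is to bound $\|\widehat{\nu}_{t_3} - \widehat{\mu}\|_{T.V.}$ by applying Lemma \ref{tildebdlem} at two different scales (the full deck for the middle phase, and each of the $K$ blocks for the third phase) and then invoking the monotonicity of the censored chain to convert first-moment decay estimates into a total variation bound.

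Phase $1$ censors updates at every boundary $x_i$, so no card crosses between blocks, and the macroscopic counts $\widetilde{\sigma}_{t_1}(x_i, x_j)$ at time $t_1$ coincide with those of the identity. Phase $2$ then runs the uncensored CAT shuffle for duration $t_2 - t_1 = (1 + \eta/3)\frac{n^3}{2\pi^2}\log n$, and applying Lemma \ref{tildebdlem} with a sufficiently small $\delta > 0$ gives
\[
\bigl|\mathbb{E}[\widetilde{\sigma}_{t_2}(x, y)]\bigr| \;\leq\; n(1+o(1)) \exp\!\Bigl(-(1-\delta)\pi^2(t_2 - t_1)/n^3\Bigr) \;\leq\; n^{-\eta/8}
\]
uniformly in $x, y \in [n]$. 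Phase $3$ reinstates the censoring, so the dynamics splits into independent CAT shuffles on each block $(x_{i-1}, x_i]$ of size $\Delta x_i \leq \lceil n/K \rceil$ (the shared global direction $\mathbf{c}_l$ does not affect the marginal evolution of each block, which is still a genuine CAT shuffle on $\Delta x_i$ cards). Applying Lemma \ref{tildebdlem} block by block with $n$ replaced by $\Delta x_i$, and noting that $\Delta_3 := t_3 - t_2 = (\eta/3)\frac{n^3}{2\pi^2}\log n$ exceeds the block mixing threshold $(\Delta x_i)^3 \log(\Delta x_i)/(2\pi^2)$ by a factor of order $K^3 \log n / \log(n/K) \gg 1$, we also obtain $|\mathbb{E}[\widetilde{\sigma}^{(i)}_{\Delta_3}(x, x')]| = O(n^{-\eta/8})$ for all $x, x'$ in the same block.

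Since phase $3$ preserves block counts, the macroscopic estimates propagate unchanged to time $t_3$, yielding $|\mathbb{E}_{\nu_{t_3}}[\widehat{\sigma}(x, j)]| = O(n^{-\eta/8})$ uniformly in $(x, j) \in [n] \times [K]$. Starting from $\id$ (the maximal element under the partial order $\geq$), Proposition \ref{monoprop} and Lemma \ref{increasinglem} imply that $\nu_t$ remains an increasing distribution stochastically dominating $\mu$ throughout the censored dynamics, so by a monotone-coupling argument in the style of \cite{Lacoin} one can obtain
\[
\|\widehat{\nu}_{t_3} - \widehat{\mu}\|_{T.V.} \;\leq\; C(\eta) \sum_{x, j} \mathbb{E}_{\nu_{t_3}}[\widehat{\sigma}(x, j)] \;\leq\; C(\eta)\cdot nK \cdot n^{-\eta/8} \;=\; o(1),
\]
which is smaller than $2\varepsilon/3$ for all sufficiently large $n$. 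The main obstacle is this last step: the first-moment decay estimates alone do not give a TV bound, and one needs the stochastic domination $\nu_t \geq \mu$ together with a careful telescoping over the pairs $(x, j)$ (or an $L^2$-type inequality exploiting the censored block-product structure) to obtain the displayed bound.
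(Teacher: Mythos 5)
There are two genuine gaps. First, your quantitative use of Lemma \ref{tildebdlem} in phase $2$ is off by a factor of roughly $\sqrt{n}$: with $t_2-t_1=(1+\eta/3)\frac{n^3}{2\pi^2}\log n$ the lemma gives $|\mathbb{E}[\widetilde{\sigma}_{t_2}(x,y)]|\leq n\exp\bigl(-(1-\delta)\tfrac{\pi^2}{n^3}(t_2-t_1)\bigr)\approx n^{\frac{1}{2}-c(\eta)}$, not $n^{-\eta/8}$; reaching a negative power of $n$ would require time of order $\frac{n^3}{\pi^2}\log n$, i.e.\ twice the available budget, and is incompatible with the sharpness of the cutoff location. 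This is not a cosmetic issue: the whole point of the argument at this stage is that the means have only dropped below the $\sqrt{n}$ fluctuation scale of $\bar{\sigma}(i,j)$ under $\mu$, and the paper converts that into a TV bound via Lemma \ref{smallmeanlem} (which needs the measure to be increasing and exploits the Gaussian scale $\sqrt{n}$, and which works only because the coarse projection $\bar{\sigma}$ has a bounded number $K^2$ of coordinates). Your final display, $\|\widehat{\nu}_{t_3}-\widehat{\mu}\|_{T.V.}\leq C(\eta)\sum_{x,j}\mathbb{E}_{\nu_{t_3}}[\widehat{\sigma}(x,j)]$, is not a valid inequality, and with the correct first-moment bound $n^{1/2-c(\eta)}$ summed over $nK$ coordinates it would be vacuous anyway; you acknowledge this as "the main obstacle" but do not resolve it, and first-moment decay alone cannot resolve it at the $\widehat{\phantom{\sigma}}$ level.

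The paper's route is structurally different in phase $3$ as well: it does not use moment decay within blocks. Instead it observes that during $[t_2,t_3]$ the block contents are frozen, so the relevant equilibrium is the conditional measure $\mu_{\sigma_{t_2}}$, and it proves $\|\nu_{t_3}(\cdot\,|\,\sigma_{t_2})-\mu_{\sigma_{t_2}}\|_{T.V.}\leq\varepsilon/3$ by the same card-by-card coupling and hitting-time estimate (Lemma \ref{couptimelem}) used for Proposition \ref{t1prop}; applying Lemma \ref{tildebdlem} block by block, as you propose, would at best bound means of block height functions and would again leave the moment-to-TV conversion unaddressed (besides requiring care because the labels occupying a block at time $t_2$ are no longer an interval). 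The conclusion is then obtained by conditioning on $\bar{\sigma}_{t_2}=\xi$, combining the within-block coupling bound with Proposition \ref{t2prop} (the coarse bound $\|\bar{\nu}_{t_2}-\bar{\mu}\|_{T.V.}\leq\varepsilon/3$, itself proved via Lemma \ref{tildebdlem} plus Lemma \ref{smallmeanlem}), and summing the two errors. To repair your argument you would need to replace both the $n^{-\eta/8}$ claim and the final display by this two-level decomposition, or supply an independent mechanism converting $O(\sqrt{n})$-size means over $nK$ coordinates into a TV bound, which is precisely what is missing.
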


\vspace{4mm}

\noindent $\blacktriangleright$ \textit{Proof of Theorem \ref{main}, part (b) from Propositions \ref{t1prop}--\ref{t3prop}.} The censoring inequality and the equation (\ref{tvuhatineq}) implies that 
\begin{equation*}
||P^{t_3}_{\id} -\mu||_{T.V.} ~\leq~ ||\nu_{t_3} -\mu||_{T.V.} 
~\leq~
||\nu_{t_3} -\nu_{t_3}^u ||_{T.V.} + || \widehat{\nu}_{t_3} - \widehat{\mu} ||_{T.V.}
~\leq~
\varepsilon. 
\end{equation*}
Therefore the mixing time $t_{mix}(\varepsilon)$ of the CAT shuffle satisfies
\begin{equation*}
t_{mix}(\varepsilon) \leq t_3 = (1+\eta) \frac{n^3}{2\pi^2} \log n,
\end{equation*}
where $\eta>0$ can be taken arbitrarily small as $n$ tends to infinity. \qed

\vspace{5mm}
\noindent $\blacktriangleright$ \textit{Proof of Proposition \ref{t1prop}.} ~Our approach will be essentially the same as Lemma \ref{couptimelem}.  Let $\sigma_t$ be the state at time $t$ under performing the censoring of $\mathcal{C}$ with initial condition $\sigma_0=\id$. Since the cards can only move within each intervals $\{x_{i-1}+1,\ldots,x_i\}$ for $i\in[K-1]$ until time $t_1$, we have $\sigma_t \in T_n$ for all $t\leq t_1$. This implies that
\begin{equation*}
\nu_t^u = \textbf{1}_{\id}^u,
\end{equation*} 
where $\textbf{1}_{\id}$ is the point mass at $\id$. Moreover, $\textbf{1}_{\id}^u$ is the stationary distribution of our chain until time $t_1$. Therefore, by the coupling inequality,
\begin{equation}\label{cencoupeq}
||\nu_{t_1} - \nu_{t_1}^u||_{T.V.} = ||\nu_{t_1} - \textbf{1}_{\id}^u ||_{T.V.} \leq \max_{\tau, \tau' \in T_n} \mathbb{P}\left( \sigma_{t_1}^\tau \neq \sigma_{t_1}^{\tau'} \right),
\end{equation}
where $\sigma_t^\tau$ denotes the censored chain with initial condition $\sigma_0^\tau = \tau$. Note that the inequality holds for any coupling $(\sigma_t^\tau, \sigma_t^{\tau'} )$. Let $\tau, \tau' \in T_n$ be arbitrary and for each $a\in[n]$, define
\begin{equation*}
\widetilde{T}_a:= \min \{t \geq 0: (\sigma_{t}^\tau)^{-1}(a) = (\sigma_t^{\tau'})^{-1}(a) \}
\end{equation*}
to be the coupling time of the card $a$ in both decks. In order to estimate the decay of the coupling time between $ \sigma_t^{\tau}$ and $\sigma_t^{\tau'}$, we adopt the coupling  which differs from the monotone coupling in \S \ref{subsecmono}. This can be described as follows: \vspace{1mm}

  At time $(n-1)s$ for each $s =0,1,2, \ldots$, we choose the same orientation of exploration in both decks.
At time $t$, let $(x,x+1)$ denote  the edge that we are about to swap  or not.
\begin{enumerate}
	\item[($1$)] If $\sigma_t(x)= \sigma_t'(x+1)$ or $\sigma_t(x+1)= \sigma_t'(x)$ then we do opposite moves. In other words, we pick either $\sigma_t$ or $\sigma_t'$ uniformly at random and swap the cards at positions $x, x+1$ of the chosen one while leaving the other fixed.
	\item[($2$)] Otherwise, we do identical moves; we either transpose the cards at $x, x+1$ for both $\sigma_t$ and $\sigma'_t$ or do nothing for both of them, each with probability $1/2$.
\end{enumerate}
This rule ensures that once a specific card is in the same position in both decks, then it will remain matched forever. Thus, if $a\in (x_{i-1} ,x_i]$ and $\tau^{-1}(a) \leq (\tau')^{-1}(a)$, then $\widetilde{T}_a$ is bounded by the hitting time $T_a$ defined as
\begin{equation*}
T_a := \min \{t\geq 0 : (\sigma_t^\tau)^{-1}(a) = x_i  \}.
\end{equation*} 
Therefore, we are in the identical situation as Lemma \ref{couptimelem}, except that the length of the interval which the process $(\sigma_t^\tau)^{-1} (a)$ can move around is now $\Delta x_i \leq \lfloor \eta n \rfloor+1$. Therefore, Lemma \ref{couptimelem} gives that
\begin{equation*}
\mathbb{P}(T_a >t_1 ) \leq (1+ O(n^{-1})) \exp \left( -\frac{\pi^2}{5\eta^2 n^3} t_1 \right) ,
\end{equation*}

\noindent and by a union bound over all $a\in[n]$ we obtain that
\begin{equation*}
\mathbb{P}\left( \sigma_{t_1}^\tau \neq \sigma_{t_1}^{\tau'}\right) \leq
\sum_{a=1}^n \mathbb{P} (T_a >t_1 )
\leq n \exp \left(-\frac{\pi^2}{15\eta} \log n \right)\leq \varepsilon/3,
\end{equation*}
for all sufficiently large $n$. Combining with (\ref{cencoupeq}) implies the desired result. \qed

\begin{remark}
	\textnormal{In \cite{Lacoin} where they study the random AT shuffle, the censored shuffle under the same cencoring scheme $\mathcal{C}$ during time $[0,t_1]$ simplifies to the product chain of $K$ copies of independent random AT shuffle on $(x_{i-1},x_i]$. Thus, they prove Proposition \ref{t1prop} using this fact without introducing the above coupling.   }
\end{remark}

\vspace{3mm}

In order to prove Proposition \ref{t3prop}, we need the following proposition:

\begin{proposition}\label{t2prop}
	For any given $\eta, \varepsilon >0$, the following holds for all large enough $n$:
	\begin{equation*}
	|| \bar{\nu}_{t_2} - \bar{\mu} ||_{T.V. }\leq \varepsilon/3.
	\end{equation*} 
\end{proposition}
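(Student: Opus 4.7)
The plan is to follow the structure of the corresponding step in Lacoin's proof for the AT shuffle, using the monotone coupling of Definition \ref{defmonocoup} combined with the decay estimate of Lemma \ref{tildebdlem}.

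First, I would couple the censored chain $(\sigma^A_t)$ started from $\id$ with a second censored chain $(\sigma^B_t)$ started from $\mu$ under the monotone coupling. Since every single edge update (swap or no-swap) preserves the uniform measure, $\sigma^B_t \sim \mu$ for all $t$, even under censoring. Because $\id$ is maximal in the partial order, Proposition \ref{monoprop} gives $\widetilde{\sigma}^A_t(x,y) \geq \widetilde{\sigma}^B_t(x,y)$ pointwise, hence $\bar{\sigma}^A_t(i,j) \geq \bar{\sigma}^B_t(i,j)$ coordinatewise for all $(i,j) \in [K]^2$.

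Next, I would reduce the total variation to the $K^2$ block-level coordinate statistics via the coupling inequality and the integer-valuedness of $\widetilde{\sigma}^A_t - \widetilde{\sigma}^B_t$:
\begin{equation*}
\|\bar{\nu}_{t_2} - \bar{\mu}\|_{T.V.} \leq \mathbb{P}(\bar{\sigma}^A_{t_2} \neq \bar{\sigma}^B_{t_2}) \leq \sum_{(i,j) \in [K]^2} \mathbb{P}\bigl(\widetilde{\sigma}^A_{t_2}(x_i, x_j) - \widetilde{\sigma}^B_{t_2}(x_i, x_j) \geq 1\bigr).
\end{equation*}
The chain is uncensored during $(t_1, t_2]$, an interval of length $(1+\eta/3)\frac{n^3}{2\pi^2}\log n$. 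Applying Lemma \ref{tildebdlem} to this uncensored phase, with the (random) initial state $\sigma^A_{t_1}\sim \nu_{t_1}$ and noting that the lemma's bound is uniform in the starting state, we obtain
\begin{equation*}
\mathbb{E}\bigl[\widetilde{\sigma}^A_{t_2}(x_i, x_j) - \widetilde{\sigma}^B_{t_2}(x_i, x_j)\bigr] = \mathbb{E}\bigl[\widetilde{\sigma}^A_{t_2}(x_i, x_j)\bigr] \leq n\cdot(1+o(1))\cdot n^{-(1-\delta)(1+\eta/3)/2}
\end{equation*}
for any small $\delta>0$ and all sufficiently large $n$.

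The main obstacle is to upgrade this mean bound into a tail bound summable over the $K^2$ coordinates, since a direct Markov step only yields $O(n^{1/2-c\eta})$, which does not vanish. To close this gap, I would supplement Lemma \ref{tildebdlem} with a second moment estimate on $\widetilde{\sigma}^A_{t_2}(x_i, x_j)$ matching the stationary variance of order $n/K^2$. Such an estimate should follow by adapting the computation in the proof of Lemma \ref{2ndmlem}, with the test function $\sin(\pi x/n)$ replaced by the indicator pair $\mathbf{1}_{\{z\leq x_i\}}\mathbf{1}_{\{a\leq x_j\}}$ and the correlation decay argument repeated with the same four-log-$n$ separation trick. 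With the hypergeometric-type central limit theorem for $\widetilde{\sigma}^B_{t_2}(x_i, x_j)\sim\mu$ providing a limiting Gaussian of standard deviation $\Theta(\sqrt{n}/K)$, the ratio between the mean shift and the standard deviation is $O(K\cdot n^{-c\eta})=o(1)$, so a Gaussian comparison gives marginal total variation $o(1/K^2)$ for each $(i,j)$. Since $K=\lfloor\eta^{-1}\rfloor$ is a fixed constant, the union bound then yields $\|\bar{\nu}_{t_2}-\bar{\mu}\|_{T.V.}\leq \varepsilon/3$ for all sufficiently large $n$.
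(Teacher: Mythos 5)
Your first half matches the paper's strategy: both arguments run the uncensored phase $(t_1,t_2]$, use monotonicity of the chain started from $\id$, and invoke Lemma \ref{tildebdlem} to show that the expected value of each $\bar{\sigma}_{t_2}(i,j)$ (equivalently of $h(\sigma)=\sum_{i,j}\bar\sigma(i,j)$ in the paper) is $O(n^{1/2-c\eta})=o(\sqrt n)$, i.e.\ small compared with the stationary fluctuation scale $\Theta(\sqrt n)$. The paper then finishes in one stroke: since $\textbf{1}_{\id}$ is increasing, Lemma \ref{increasinglem} gives that $\nu_{t_2}$ is increasing, and Lemma \ref{smallmeanlem} (Lacoin's Lemma 5.5) states precisely that an \emph{increasing} measure with $\nu(h)\leq\gamma\sqrt n$ has $\|\bar\nu-\bar\mu\|_{T.V.}\leq\varepsilon/3$. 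You do not use this lemma and instead try to re-derive the comparison step, and that is where your argument has a genuine gap.

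Concretely, two steps do not go through as written. First, your displayed reduction $\|\bar\nu_{t_2}-\bar\mu\|_{T.V.}\leq\sum_{i,j}\mathbb{P}\bigl(\widetilde\sigma^A_{t_2}(x_i,x_j)-\widetilde\sigma^B_{t_2}(x_i,x_j)\geq 1\bigr)$ is a dead end: the expected coordinatewise gap is of order $n^{1/2-c\eta}\to\infty$, so the two copies have \emph{not} exactly coupled at $t_2$ and these probabilities are not small; you acknowledge Markov fails, but the problem is the exact-coupling reduction itself, not just the moment bound. Second, your replacement — a per-coordinate second moment estimate, a hypergeometric CLT for $\bar\sigma(i,j)$ under $\mu$, and a ``Gaussian comparison'' giving marginal TV $o(1/K^2)$ followed by a union bound — does not control the quantity you need: $\bar\nu_{t_2}$ and $\bar\mu$ are laws of the full $K\times K$ array, which is not a product over $(i,j)$, and joint total variation is not bounded by the sum of marginal total variations. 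Moreover, even for a single coordinate, weak convergence to a Gaussian plus a mean shift $o(\sqrt n)$ does not by itself yield a total-variation bound on the lattice-valued law; one needs either a local CLT/anticoncentration input or, as in Lacoin's Lemma 5.5, the stochastic-domination structure coming from the increasing property of $\nu_{t_2}$, which handles all coordinates jointly. So the missing ingredient is exactly the content of Lemma \ref{smallmeanlem}: either cite it (as the paper does) or prove a joint small-mean-plus-monotonicity-implies-small-TV statement; the sketch you give is not a substitute for it.
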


\noindent $\blacktriangleright$ \textit{Proof of Proposition \ref{t2prop}.} ~We define the function $h:S_n \rightarrow \mathbb{R}$ to be
\begin{equation*}
h(\sigma) := \sum_{i,j=1}^{K-1} \bar{\sigma}(i,j).
\end{equation*}
Then for any increasing probability measure $\nu$ on $S_n$, we have the following lemma from \cite{Lacoin} which tells us how the expected value $\nu(h)$ controls the distance $||\nu - \mu||_{T.V.}$ from the uniform measure:

\begin{lemma}[\cite{Lacoin}, Lemma 5.5]\label{smallmeanlem}
	Let $\nu$ be an increasing probability measure on $S_n$. For all $\varepsilon >0$, there exists a constant $\gamma(K,\varepsilon)>0$ such that for all sufficiently large $n$,
	\begin{equation*}
	\nu(h) \leq \gamma \sqrt{n} ~~~~\textnormal{implies}~~~~ ||\bar{\nu} - \bar{\mu}||_{T.V.} \leq \varepsilon/3.
	\end{equation*}
\end{lemma}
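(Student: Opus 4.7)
The plan is to upgrade the first-moment bound $\nu(h)\leq\gamma\sqrt{n}$ into a total variation bound between $\bar{\nu}$ and $\bar{\mu}$ by combining stochastic dominance (inherited from monotonicity of $\nu$) with a Gaussian approximation for $\bar{\mu}$. Each coordinate $\bar{\sigma}(i,j)$ has standard deviation of order $\sqrt{n}$ under $\bar{\mu}$, so the hypothesis says $\bar{\nu}$ is shifted by at most order $\gamma$ on the natural scale; a Pinsker-type comparison of two Gaussians with the same covariance and centers $O(\gamma\sqrt{n})$ apart then yields TV of order $\gamma$, which can be made smaller than $\varepsilon/3$ by choosing $\gamma=\gamma(K,\varepsilon)$ small enough.

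The first step is to observe that $\bar{\nu}$ stochastically dominates $\bar{\mu}$ on $\bar{S}_n$: since $\sigma\mapsto\bar{\sigma}$ is monotone with respect to the partial order defined by $\widetilde{\sigma}$, the preimage of every up-set in $\bar{S}_n$ is an up-set in $S_n$, and any increasing measure $\nu$ assigns at least uniform mass to up-sets of $S_n$. Strassen's theorem then supplies a coupling $(X,Y)\sim(\bar{\nu},\bar{\mu})$ with $X\geq Y$ almost surely. The second step is a quantitative CLT for $\bar{\mu}$: each $\bar{\sigma}(i,j)$ is a centered hypergeometric of variance $\Theta(n)$, and the normalized joint vector $(\bar{\sigma}(i,j)/\sqrt{n})_{(i,j)\in[K-1]^2}$ converges to a non-degenerate Gaussian $\mathcal{N}(0,\Sigma_K)$ with $\Sigma_K$ depending only on $K$. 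This concentrates $\bar{\mu}$ on a box $B_M=\{\|\bar{\sigma}\|_\infty\leq M\sqrt{n}\}$ with probability at least $1-\eta'(M,K)$, which can be made small by taking $M$ large. Stochastic dominance transfers the lower-tail bound to $\bar{\nu}$ immediately; for the upper tail one writes $X_{ij}=h(X)-\sum_{(i',j')\neq(i,j)}X_{i'j'}$ and combines $X_{i'j'}\geq -M\sqrt{n}$ on the good event with Markov applied to $h\leq\gamma\sqrt{n}$ to obtain $\bar{\nu}(X_{ij}>(M+L)\sqrt{n})\leq C_1(K)\gamma/L+\eta'$ for any $L>0$.

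The conclusion comes from a local CLT comparison restricted to a slightly enlarged box: $\bar{\mu}$ is approximated in TV by a discretized Gaussian with covariance $n\Sigma_K$ and center $0$, and $\bar{\nu}$, whose coordinatewise means are non-negative (by stochastic dominance and $\bar{\mu}(\bar{\sigma}(i,j))=0$) with total sum at most $\gamma\sqrt{n}$, is approximated by a discretized Gaussian with the same covariance but center $O(\gamma\sqrt{n})$ away. Pinsker bounds the TV between these Gaussians by $C_2(K)\gamma$, whence $\|\bar{\nu}-\bar{\mu}\|_{T.V.}\leq C_2(K)\gamma+O_K(\eta')+o_n(1)$, which falls below $\varepsilon/3$ after choosing $M$ large, then $\gamma$ small in terms of $K$ and $\varepsilon$, and finally $n$ large enough. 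The principal obstacle is this last step: a local CLT for the uniform measure is classical, but the corresponding Gaussian approximation for $\bar{\nu}$ relies only on its being increasing with a controlled first moment of $h$, and the argument must show that the monotone density $d\bar{\nu}/d\bar{\mu}$ cannot fluctuate appreciably across configurations with comparable coordinatewise values, so that a single scalar control on $h$ suffices to pin down the full $(K-1)^2$-dimensional profile.
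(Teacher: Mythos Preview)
The paper does not give its own proof of this lemma; it is quoted from \cite{Lacoin} (Lemma~5.5 there) and accompanied only by a one-paragraph heuristic. So there is no detailed argument in the paper to compare your proposal against line by line.

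Your proposal contains a genuine gap, and you identify it yourself in the final paragraph. The assertion that $\bar{\nu}$ ``is approximated by a discretized Gaussian with the same covariance but center $O(\gamma\sqrt{n})$ away'' is not justified by the hypotheses and is in general false. Monotonicity of $\nu$ together with the first-moment bound $\nu(h)\le\gamma\sqrt{n}$ does not force any Gaussian shape on $\bar{\nu}$: for instance, $\nu$ could be a mixture $(1-p)\mu+p\,\delta_{\id}$ with $p$ chosen so that $\nu(h)=\gamma\sqrt{n}$, and then $\bar{\nu}$ has an atom. The Pinsker comparison of two Gaussians therefore has nothing to act on, and the sentence ``the argument must show that the monotone density $d\bar{\nu}/d\bar{\mu}$ cannot fluctuate appreciably\ldots'' is precisely the content of the lemma rather than a technicality to be filled in later.

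The route taken in \cite{Lacoin}, and hinted at in the paper's remark, does not attempt any Gaussian approximation of $\bar{\nu}$. It instead uses monotonicity in a sharper way: one first checks that the projected density $d\bar{\nu}/d\bar{\mu}$ is itself an increasing function on $\bar{S}_n$ (this requires a short combinatorial argument about the fibers of $\sigma\mapsto\bar{\sigma}$ and is not automatic from stochastic domination alone). With an increasing density, the total variation is realized on an up-set, and one can reduce to one-dimensional comparisons of the marginals $\bar{\nu}_{i,j}$ against $\bar{\mu}_{i,j}$. For the marginals the Gaussian limit applies only to $\bar{\mu}_{i,j}$, and that suffices: stochastic domination plus the mean bound $\nu(\bar{\sigma}(i,j))\le\nu(h)\le\gamma\sqrt{n}$ and the $O(n^{-1/2})$ density of $\bar{\mu}_{i,j}$ give the coordinatewise control, which is then assembled over the $(K-1)^2$ pairs. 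No CLT for $\bar{\nu}$ is ever invoked.
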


\noindent Lemma \ref{smallmeanlem} stems from the observation that if $\sigma \sim \mu$, then $n^{-1/2}\,\bar{\sigma}(i,j) $ converges to a Gaussian distribution as $n$ tends to infinity. Due to this fact, one can show that if $\nu(\bar{\sigma}(i,j))$ is less than a small constant times $\sqrt{n}$, then the distance between $\bar{\mu}_{i,j}$ and $\bar{\nu}_{i,j}$ is accordingly small, where $\bar\nu_{i,j}$ (resp. $\bar\mu_{i,j}$) denotes the distribution of $\bar{\sigma}(i,j)$ under $\sigma \sim \nu$ (resp. $\sigma \sim \mu$). The function $h$ combines the information for all $i, j$.

Due to Lemma \ref{tildebdlem}, $\nu_{t_2}(h)$ can be bounded by $\gamma \sqrt{n}$, and hence we can apply Lemma \ref{smallmeanlem} to obtain the desired inequality. Letting $\delta = \eta/7$ in Lemma \ref{tildebdlem}, we have
\begin{equation*}
\nu_{t_2}(h) \leq P^{t_2-t_1}_{\id} (h) \leq n(K-1)^2 \exp\left( -\left(1-\frac{\eta}{6}\right) \frac{\pi^2}{n^3} (t_2- t_1) \right) \leq
\gamma \sqrt{n},
\end{equation*}
where the last inequality holds for any fixed $\gamma>0$ when $n$ is large enough. Moreover, since $\textbf{1}_{\id} $ is increasing, Lemma \ref{increasinglem} implies that $\nu_{t_2}$ is also increasing. Therefore, Lemma \ref{smallmeanlem} tells us that 
\begin{equation*}
||\bar{\nu}_{t_2} -\bar{\mu}||_{T.V.} \leq \varepsilon /3. 
\end{equation*}
\qed


Now we conclude the proof of Proposition \ref{t3prop}. We again rely on the ideas in the proof of Proposition \ref{t1prop} and then follow .

\vspace{2mm}
\noindent $\blacktriangleright$ \textit{Proof of Proposition \ref{t3prop}.} ~ Let $\sigma_{t}$ be the state of the censored CAT shuffle at time $t$. Due to our censoring scheme, we have
\begin{equation*}
\sigma_{t} (\{x_{i-1}+1, \ldots, x_i\}) = \sigma_{t_2} (\{x_{i-1}+1, \ldots, x_i\}) ~~~~~\textnormal{for all } i\in [K],~ t\in [t_2, t_3].
\end{equation*}
Therefore, the stationary distribution $\mu_{\sigma_{t_2}}$ for the chain during time $t\in[t_2, t_3] $ can be written as
\begin{equation*}
\mu_{\sigma_{t_2}}(\cdot) := \mu(\;\cdot\;|\,\sigma(\{x_{i-1}+1,\ldots,x_i\}) = \sigma_{t_2}(\{x_{i-1}+1, \ldots,x_i\} ), ~\forall i\in[K] ).
\end{equation*}
(Note the difference between $\mu_{\sigma_{t_2}} $ and $\textbf{1}_{\sigma_{t_2}}^u$; the former uniformizes over the positions $x\in (x_{i-1}, x_i]$ while the latter uniformizes over the labels $\sigma(x) \in (x_{i-1}, x_i]$.) Thus, the same coupling argument in Proposition \ref{t1prop} implies that
\begin{equation}\label{t3bd1}
||\nu_{t_3}( \; \cdot \; | \sigma_{t_2}) - \mu_{\sigma_{t_2}}||_{T.V.} \leq \varepsilon/3,
\end{equation}
where $\nu_{t_3}( \; \cdot \; | \sigma_{t_2})$ denotes the probability distribution of $\sigma_{t_3}$ given that it was at state $\sigma_{t_2}$ at time $t_2$. For arbitrary $\xi \in \bar{S}_n$,  we average the inequality (\ref{t3bd1}) on the event $\{\bar{\sigma}= \xi \}$ to obtain that
\begin{equation*}
\sum_{\sigma_{t_2} : \, \bar{\sigma}_{t_2  } = \xi} \nu_{t_2}(\sigma_{t_2} |\, \bar{\sigma}_{t_2} = \xi) \, ||\nu_{t_3}( \; \cdot \; | \sigma_{t_2}) - \mu_{\sigma_{t_2}}||_{T.V.}\; \geq\; 
|| {\nu}_{t_3} (\;\cdot\; |\, \bar{\sigma}_{t_2}=\xi) - {\mu} (\;\cdot\; |\, \bar{\sigma}= \xi)||_{T.V.} .
\end{equation*}
Thus, by taking projections and using (\ref{t3bd1}) we have
\begin{equation}\label{t3bd2}
|| \widehat{\nu}_{t_3} (\;\cdot\; |\, \bar{\sigma}_{t_2}=\xi) - \widehat{\mu} (\;\cdot\; |\, \bar{\sigma}= \xi)||_{T.V.} \leq \varepsilon/3.
\end{equation}


\noindent Following the computation in Proposition 5.3 of \cite{Lacoin}, this implies that
\begin{equation*}
\begin{split}
\sum_{\widehat\sigma \in \widehat{S}_n}|\widehat{\nu}_{t_3} (\widehat{\sigma} ) -
\widehat{\mu}(\widehat{\sigma})| 
&\leq
\sum_{\xi \in \bar{S}_n}\; \sum_{\widehat{\sigma}:\; \bar{\sigma}=\xi}  |\widehat{\nu}_{t_3} (\widehat{\sigma} ) -
\widehat{\mu}(\widehat{\sigma})| \\
&\leq
\sum_{\xi \in \bar{S}_n}\; \sum_{\widehat{\sigma}:\; \bar{\sigma}=\xi}  \Big( \, \bar{\nu}_{t_2}(\xi) \;  |\widehat{\nu}_{t_3} (\widehat{\sigma}\,|\, \bar{\sigma}_{t_2}=\xi ) -
\widehat{\mu}(\widehat{\sigma} \,|\, \bar{\sigma}=\xi )|  \\
&~~~~~~~~~~~~~~~~~~+ \bar{\mu}(\widehat{\sigma} \,|\, \widehat{\sigma}=\xi ) \; | \bar{\nu}_{t_2} (\xi) - \bar{\mu}(\xi) | \,\Big) \\
&\leq
\frac{2\varepsilon}{3} +\frac{2\varepsilon}{3} \leq \frac{4\varepsilon}{3},
\end{split}
\end{equation*}
where the inequality in the last line follows from (\ref{t3bd2}) and Proposition \ref{t2prop}. \qed

\section{Application to the systematic simple exclusion process}\label{secex}

In this section, we study the systematic simple exclusion process using the techniques developed from the previous chapters. We show that the mixing time of this process satisfies a similar bound as Theorem \ref{main}.

The systematic simple exclusion process  can be understood as a projection of the CAT shuffle. To define the model, consider we have a length $(n-1)$ path on $\{1 , \ldots, n \}$ and locate $k \leq n$ particles at vertices, with each vertex being occupied by at most one particle. We introduce the dynamics similar to the CAT shuffle: At the beginning, we pick either $1$ or $n$ uniformly at random. If $1$ is chosen, then at time $t\in\{1,\ldots,n-1 \} $ we update the edge $(t,t+1)$, meaning that we either swap the possessions of the endpoints of the edge or leave it stay fixed, each with probability $\frac{1}{2}$. If $n$ is chosen, we explore in the opposite direction. After updating all $(n-1)$ edges, we again choose a random initial location out of  $\{1,n\}$ and continue the systematic updates starting from the chosen point. In other words, it is the projection of the CAT shuffle which regards $k$ cards  as particles and the rest as empty sites.

Using the argument from previous sections, we have the following mixing time bound for the systematic simple exclusion process.

\begin{theorem}\label{main2}
	Consider the systematic simple exclusion process on the line $\{1,\ldots,n \}$ with $k(n)$ particles such that both $k $ and $n-k$ tends to infinity as $n \rightarrow \infty$. Let $k' := \min \{k,n-k\}$. Then for any $\varepsilon>0$, we have
	\begin{enumerate}
		\item[\textnormal{(a)}] $t_{mix} (1-\varepsilon) 
		\geq \frac{n^3}{2\pi^2} \log k' 
		-\frac{n^3}{2\pi^2}\log \left(\frac{c\log k'}{ \varepsilon} \right) $, where  $c$ is a universal constant.
		
		\item[\textnormal{(b)}] $t_{mix} (\varepsilon) \leq (1+o(1)) \frac{4n^3}{\pi^2} \log k' $.
	\end{enumerate}
\end{theorem}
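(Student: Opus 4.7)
The systematic SSEP is the projection of the CAT shuffle obtained by identifying all particles with one another and all holes with one another. Under this projection the monotone partial order and the update rule of \S\ref{subsecmono} descend to the SSEP (cf.\ \cite{Lacoin} for the analogous passage in the random adjacent transposition setting), so the monotone coupling, the censoring inequality, and Lemma \ref{increasinglem} are all available at the level of particle configurations. Both parts of Theorem \ref{main2} are obtained by importing the arguments of \S\ref{seclowerbd} and \S\ref{secupbd}.

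\textbf{Part (a).} Let $\eta_t\in\{0,1\}^{[n]}$ denote the particle configuration after $(n-1)t$ updates and define the height function and approximate eigenfunction
\[
H_t(x) := \sum_{z=1}^x \eta_t(z) - \frac{kx}{n}, \qquad \Psi_t := \sum_{x=1}^{n-1} H_t(x)\sin\!\Big(\frac{\pi x}{n}\Big).
\]
Because the swap dynamics at every edge are identical in the CAT shuffle and the SSEP, the proofs of Lemmas \ref{1stmlem} and \ref{2ndmlem} transfer essentially verbatim to $\Psi_t$: the drift satisfies $|\mathbb{E}[\Psi_{t+1}\mid\mathcal{F}_t]-(1-\gamma)\Psi_t|\leq 3\pi/(4n)$ with $\gamma=\pi^2/n^2-O(n^{-4})$, and the second-moment increment is controlled by the same covariance decomposition, now summed over the $k$ particles in place of the $\lfloor n/2\rfloor$ small cards. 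For the initial data I would pack the $k'$ minority particles into a contiguous block at one end of the deck; a direct evaluation analogous to \eqref{starting} then yields $\Psi_0\gtrsim (k')^2$. Plugging these inputs into Lemma \ref{W} and multiplying the resulting bound in exploration units by $n-1$ produces the claimed lower bound.

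\textbf{Part (b).} For the upper bound I would rerun the three-phase censoring argument of \S\ref{subsecpfmain2} at the level of the SSEP. Proposition \ref{t1prop} transfers with essentially no change because the single-tagged-card hitting-time estimate of Lemma \ref{couptimelem} is insensitive to the projection. Proposition \ref{t2prop} requires the SSEP analogue of the decay estimate Lemma \ref{tildebdlem}: the centered height $\widetilde\eta(x):=\sum_{z\leq x}\eta(z) - kx/n$ has $\|\widetilde\eta\|_\infty\leq k'$ rather than $n$, and reworking the Brownian-coupling argument of \S\ref{subsectildebd} with this improved prefactor gives
\[
|\mathbb{E}[\widetilde\eta_t(x)]|\leq k'(1+o(1))\exp\!\Big(-(1-\delta)\frac{\pi^2}{n^3}t\Big).
\]
Choosing the middle phase long enough that the right-hand side falls below $\sqrt{k'}$ lets the Gaussian-threshold argument of Lemma \ref{smallmeanlem} close the second phase, and the third phase is handled exactly as in Proposition \ref{t3prop}. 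Tracking the constants through the three phases yields the claimed bound $(1+o(1))\frac{4n^3}{\pi^2}\log k'$.

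\textbf{Main obstacle.} The crux is the SSEP version of Lemma \ref{tildebdlem}: it is precisely the replacement of $n$ by $k'$ in the sup-norm prefactor that turns $\log n$ into $\log k'$ in the final mixing-time estimate. Adapting the single-card Brownian-coupling argument of \S\ref{subsectildebd} so that the number of minority particles governs the initial size of the process being damped is the one nontrivial technical step; everything else is either a direct transcription of the CAT shuffle arguments or a projection statement that follows once the monotone framework is in place.
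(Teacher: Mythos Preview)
Your approach matches the paper's: define the height function $g$, set $\Psi_t=\sum_x g(x)\sin(\pi x/n)$, verify the analogues of Lemmas \ref{1stmlem} and \ref{2ndmlem} (this is the paper's Lemma \ref{psims}), and plug into Lemma \ref{W}. One substantive correction: the initial value is \emph{not} $\Psi_0\gtrsim (k')^2$ but rather $\Psi_0\asymp nk'$. Indeed, from the packed state $\wedge$ one has $g_\wedge(x)=k'(1-x/n)$ for $x>k'$, and the sum $\sum_{x>k'} k'(1-x/n)\sin(\pi x/n)$ is of order $nk'$. This matters: feeding $\Psi_0\gtrsim (k')^2$ into Lemma \ref{W} yields a first term $\frac{2n^2}{\pi^2}\log k'$ against a second term containing $\frac{n^2}{\pi^2}\log n$, and when $k'\ll n^{2/3}$ the resulting lower bound is negative. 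With $\Psi_0\gtrsim nk'$ the two $\log n$ contributions cancel and one recovers exactly $\frac{n^3}{2\pi^2}\log k' - \frac{n^3}{2\pi^2}\log(c\log k'/\varepsilon)$.

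\textbf{Part (b).} Here your route diverges sharply from the paper's. The paper does \emph{not} rerun the three-phase censoring machinery; it uses a one-line coupling argument. Take two copies $\xi^1_t,\xi^2_t$ of the process, label the $k$ particles arbitrarily, and couple as in the proof of Proposition \ref{t1prop} (opposite moves when a labelled particle in one copy would cross its namesake in the other, identical moves otherwise). For each label $i$ the coupling time is bounded by the hitting time $T_i$ of the right endpoint, to which Lemma \ref{couptimelem} applies directly: at $t=(1+\delta)\tfrac{4n^3}{\pi^2}\log k'$ one has $\sum_i\mathbb{P}(T_i>t)\leq k\cdot k^{-(1+\delta)}\to 0$. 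This is where the constant $4/\pi^2$ comes from --- it is the exponent $1/4$ in Lemma \ref{couptimelem}.

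Your three-phase proposal, by contrast, if it went through, would yield the sharper $(1+o(1))\tfrac{n^3}{2\pi^2}\log k'$, not the $\tfrac{4n^3}{\pi^2}\log k'$ you claim: the middle phase needs only $k'e^{-(1-\delta)\pi^2 t/n^3}\leq\gamma\sqrt{k'}$, i.e.\ $t\approx\tfrac{n^3}{2\pi^2}\log k'$, and the outer phases are $O(\eta^2 n^3\log k')$. The paper explicitly conjectures this sharper bound but records it as open (see the Remark following Theorem \ref{main2}), citing correlations in the systematic update rule that obstruct Lacoin's SSEP argument. So either your sketch contains an unidentified gap --- most likely in transporting Lemma \ref{smallmeanlem} and the Phase-3 conditional-mixing step to the particle setting, where the projections $\widehat{(\cdot)}$ and $(\cdot)^u$ degenerate --- or you have outlined an improvement on the paper. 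Either way, it is not the paper's proof of the stated bound, and the constant you quote is inconsistent with the method you propose.
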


\begin{remark}
	\textnormal{We conjecture that the lower bound of Theorem \ref{main2} is sharp, i.e. the systematic simple exclusion process should exhibit cutoff at $t_{mix}(\varepsilon) = (1+o(1))\frac{n^3}{2\pi^2}\log k'$. The main difficulty of improving (b) of Theorem \ref{main2} stems from the deterministic aspects of the update rule. For instance, in  \cite{Lacoin} where cutoff for the simple exclusion process is established, the problem can be reduced to analyzing simple random walks. However in the systematic case, the increments of the random walks corresponding to those derived in \cite{Lacoin} are heavily correlated, which makes it more difficult to study. }
\end{remark}

\begin{proof}
	We can assume that $k \leq \frac{n}{2}$, since in the other case we can swap the roles of empty sites and particles. Let  $\Omega_{n,k} := \{\xi \in \{0,1\}^n: \sum_{x=1}^{n} \xi(x) =k \}$ be the state space of the chain, where $\xi(x) = 1$ (resp. $\xi(x)=0$) indicates that position $x$ is occupied (resp. empty).
	
	To prove part (a), We consider the following height function for each $\xi \in \Omega_{n,k}$:
	\begin{equation}\label{gdef}
	g_\xi (x): = \sum_{z=1}^x \xi(z) - \frac{xk}{n}.
	\end{equation}

	\noindent Using the height function, define
	\begin{equation}\label{Phiexdef}
	\Psi(\xi) := \sum_{x=1}^n g_\xi (x)\sin \left(\frac{\pi x}{n} \right).
	\end{equation}
	We additionally define $\wedge_t$ to be the state at time $t$ of the systematic simple exclusion process with initial condition that has  particles in the first $k$ positions (i.e., $\wedge(x) = \textbf{1}_{\{x\leq k \}}$ for all $x$), and let
	\begin{equation}\label{Psitdef}
	\Psi_t := \Psi (\wedge_{(n-1)t}).
	\end{equation}
	Then the following lemma is a straightforward generalization of Lemmas \ref{1stmlem} and \ref{2ndmlem}:
	
	\begin{lemma}\label{psims}
		Let $\Psi_t$ be defined as (\ref{Psitdef}). For any $t\in \mathbb{N}$ we have
		\begin{enumerate}
			\item[\textnormal{(a)}] $|\,\mathbb{E}[\Psi_{t+1}|\mathcal{F}_t ] -(1-\gamma)\Psi_t \,| \leq \frac{3\pi}{4n}$, where $\gamma := \pi^2/n^2 - O(n^{-4})$.
			
			\item[\textnormal{(b)}] $\mathbb{E}[(\Delta \Psi_t)^2|\mathcal{F}_t] \leq Ck\log k$, where $C>0$ is a universal constant.
		\end{enumerate}
		
	\end{lemma}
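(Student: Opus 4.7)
The key observation is that the systematic simple exclusion process is a projection of the CAT shuffle obtained by identifying labels $\{1,\ldots,k\}$ as ``particle'' and the rest as ``empty.'' Under this projection, the height function $g_{\wedge_t}$ admits the particle-wise representation
\begin{equation*}
g_{\wedge_t}(x) = \sum_{a=1}^{k} \mathbf{1}_{\{q_t(a) \leq x\}} - \frac{xk}{n},
\end{equation*}
where $q_t(a)$ denotes the position of the $a$-th particle at time $(n-1)t$. This is structurally identical to (\ref{hrep2}) with $k$ replacing $\lfloor n/2 \rfloor$; since updates depend only on the occupation indicator, the dynamics of $g_{\wedge_t}$ under a single swap obeys the same discrete half-Laplacian action at particle-empty interfaces as $h_t$ does for the CAT shuffle. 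Moreover, a tagged particle's single-sweep jump distribution is stochastically dominated by that of the corresponding card in the CAT shuffle, since other particles can only obstruct its motion.

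For part (a), the proof of Lemma \ref{1stmlem} transfers verbatim. The sweep-averaged expectation $\mathbb{E}[g_{\wedge_{t+1}}(x) - g_{\wedge_t}(x) \mid \mathcal{F}_t]$ is, up to $O(n^{-3})$ boundary corrections, a discrete Laplacian for which $\sin(\pi x/n)$ is an approximate eigenfunction with eigenvalue $-\gamma = -\pi^2/n^2 + O(n^{-4})$. Neither $\gamma$ nor the error term $3\pi/(4n)$ depends on $k$, since both arise from the endpoint behavior of $\sin(\pi x/n)$ and the asymmetry of a single direction choice. Summing against $\sin(\pi x/n)$ then yields the desired first-moment bound.

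For part (b), I would mirror the proof of Lemma \ref{2ndmlem}, writing $\Delta \Psi_t = \sum_{a=1}^{k} \psi_t(a)$ with
\begin{equation*}
\psi_t(a) := \sum_{x=q_{t+1}(a)}^{n-1} \sin\!\left(\frac{\pi x}{n}\right) - \sum_{x=q_t(a)}^{n-1} \sin\!\left(\frac{\pi x}{n}\right).
\end{equation*}
By the stochastic domination, the per-particle bound $\mathbb{E}[\psi_t(a)^2 \mid \mathcal{F}_t] \leq C_1$ still holds. The crucial modification is to replace the cutoff $4\log n$ in the decorrelation event $A$ by $c\log k$ for a large constant $c$: the exponential tail of the jump distribution still gives $\mathbb{P}(A^c) \leq k^{-4}$, and repeating the H\"older and modulus-of-continuity arguments (\ref{psicoreq1})--(\ref{psicoreq3}) at this new scale yields
\begin{equation*}
|\mathbb{E}[\psi_t(a)\psi_t(b) \mid \mathcal{F}_t]| \leq \frac{C}{n} + \frac{C}{k^2}
\end{equation*}
whenever $|q_t(a) - q_t(b)| > c\log k$. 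Summing over $(a,b) \in [k]^2$, the far-pair contribution is at most $k^2(C/n + C/k^2) = O(k)$ using $k \leq n$, while the close-pair contribution is at most $2c\, k\log k \cdot C_1 = O(k\log k)$, giving the claimed bound.

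\textbf{Main obstacle.} The only subtlety is securing $\log k$ rather than $\log n$ in part (b): this forces the truncation scale to be $c\log k$, with matching tail $\mathbb{P}(A^c) \leq k^{-4}$ and H\"older remainder $O(k^{-2})$, so that the far-pair sum remains $O(k)$ and is absorbed by the close-pair term $O(k \log k)$. The bookkeeping is routine once one consistently uses $k$ in place of $n$ in every cutoff, probability tail, and pair count.
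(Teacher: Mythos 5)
Your proposal is correct and follows exactly the route the paper intends: Lemma \ref{psims} is stated there as a ``straightforward generalization'' of Lemmas \ref{1stmlem} and \ref{2ndmlem}, and your write-up supplies precisely that generalization, including the one genuinely needed adjustment (truncating the decorrelation event at scale $c\log k$ so that the far-pair sum is $O(k)$ and the close-pair count is $O(k\log k)$). One small remark: since the exclusion process is the projection of the CAT shuffle, a tagged particle in the lifted chain \emph{is} a card, so its sweep-jump law coincides with (\ref{jumpprob1},\ref{jumpprob2}) exactly --- no stochastic domination argument is needed.
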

	
	\noindent By following the approach of \S \ref{subsecmain1pf}, we deduce part (a) from Lemmas \ref{W} and \ref{psims}. \vspace{2mm}
	
	To prove the upper bound, we consider two copies $\xi^1_t,\; \xi^2_t$ of systematic simple exclusion processes with different initial configurations, and estimate their coupling time using Lemma \ref{couptimelem}. To be specific, we first label the $k$ particles arbitrarily in both chains, and consider the coupling introduced in the proof of Proposition \ref{t1prop}. For each $i$, the coupling time of the $i$-th particle in $\xi_t^1$ and $\xi_t^2$ is bounded by the hiting time of the left particle reaching at the right end of the deck. Therefore, if we call the latter quantity $T_i$, then for any $\varepsilon, \delta>0$  and $t=(1+\delta)\frac{4n^3}{\pi^2} \log k $, Lemma \ref{couptimelem} implies that
	\begin{equation*}
	\max_{\xi_0^1,\: \xi_0^2 \in \Omega_n} \mathbb{P}\left( \xi_t^1 \neq \xi_t^2 \right) 
	\leq
	\sum_{i=1}^k \mathbb{P} (T_i >t) 
	\leq 
	\varepsilon,
	\end{equation*}
	for all sufficiently large n.
\end{proof}

\section{Appendix}

\subsection{The decay estimate: proof of Lemma \ref{tildebdlem}}\label{subsectildebd}
\noindent \textbf{Lemma \ref{tildebdlem}.} \textit{Let $(\sigma_t)$ denote the CAT shuffle starts from an arbitrary initial state  and  let $\delta>0$ be arbitrary. Then there exist $N_\delta, \:\theta_\delta>0$ such that  for any $x,y \in [n]$, $n \geq N_\delta$ and  $t > \theta_\delta n^3 $ satisfying $t = O(n^4)$,  we have}
\begin{equation}\label{eqdecay}
\left|\,\mathbb{E} [\widetilde{\sigma}_t (x,y) ]\,\right| \leq n(1+ O(tn^{-5})) \exp \left(-(1-\delta)\frac{\pi^2}{n^3} t \right).
\end{equation} 

\begin{proof}
	Let $y\in [n]$ be given and let $t$ be of the form $t=(n-1)i$ for $i\in \mathbb{N}$. Assume that $y<n$ (otherwise we have nothing to prove) and set $\Delta := n-1$. We analyze the expected difference between $\widetilde{\sigma}$ at time $t+\Delta$ and $t$ given the information $\mathcal{F}_t$ until time $t$. Recall Definition \ref{defmonocoup}, where we defined the random variables $U_s, \:\textbf{c}_i$ and the update rules using them. Notice that between time $t$ and $t+\Delta$, $\widetilde{\sigma}_s(x,y)$ can only be changed when updating the edge $(x,x+1)$. Also, when update is performed at edge $(x,x+1) $ at time $s$,  it goes up by $1$ if $\sigma_s(x)>y\geq \sigma_s(x+1)$ and $U_s =1$, whereas it moves down by $1$ if $\sigma_s(x)\leq y <\sigma_s(x+1)$ and $U_s =0$. 
	
	Set $v_t(x):= \widetilde{\sigma}_t(x,y)$. We compute $\mathbb{E} [v_{t+\Delta}(x) - v_t (x)\; | \; \mathcal{F}_t]$ based on the above properties of $\widetilde{\sigma}$, by considering the cases $\textbf{c}_i=1$ and $\textbf{c}_i =n$ separately. If $2\leq x \leq n-2$, we have
	\begin{equation}\label{diffcomp1}
	\begin{split}
	\mathbb{E} [v_{t+\Delta}(x) - v_t (x)\; | \; \mathcal{F}_t, \: \textbf{c}_i =1]
	=
	&\sum_{k=0}^{x-2} \frac{1}{2^{k+2}} \left(\textbf{1}_{\{\sigma_t(x+1) \leq y < \sigma_t (x-k) \}} - \textbf{1}_{\{\sigma_t(x+1) > y \geq \sigma_t(x-k) \}}  \right) \\
	&~~~+
	\frac{1}{2^{x}} \left(\textbf{1}_{\{\sigma_t(x+1) \leq y < \sigma_t (1) \}} - \textbf{1}_{\{\sigma_t(x+1) > y \geq \sigma_t(1) \}}  \right);\\
	\mathbb{E} [v_{t+\Delta}(x) - v_t (x)\; | \; \mathcal{F}_t, \: \textbf{c}_i =n]
	=
	&\sum_{k=0}^{n-x-2} \frac{1}{2^{k+2}} \left(\textbf{1}_{\{\sigma_t(x+1+k) \leq y < \sigma_t (x) \}} - \textbf{1}_{\{\sigma_t(x+1+k) > y \geq \sigma_t(x) \}}  \right)\\
	&~~~+
	\frac{1}{2^{n-x}} \left(\textbf{1}_{\{\sigma_t(n) \leq y < \sigma_t (x) \}} - \textbf{1}_{\{\sigma_t(n) > y \geq \sigma_t(x) \}}  \right).
	\end{split}
	\end{equation} 
	Notice the following relation between the indicators:
	\begin{equation*}
	\begin{split}
	\textbf{1}_{\{\sigma(x_1) \leq y < \sigma (x_2) \}} - \textbf{1}_{\{\sigma(x_1) > y \geq \sigma(x_2) \}}  
	&=
	\textbf{1}_{\{\sigma(x_1) \leq y \}} - \textbf{1}_{\{\sigma(x_1), \; \sigma(x_2)\leq  y  \}}\\  
	&~~~~- \textbf{1}_{\{\sigma(x_2) \leq y \}} + \textbf{1}_{\{\sigma(x_1), \; \sigma(x_2)\leq y  \}}\\
	&=   \textbf{1}_{\{\sigma(x_1) \leq y \}}- \textbf{1}_{\{\sigma(x_2) \leq y \}} \\
	&=
	\widetilde{\sigma}(x_1,y) - \widetilde{\sigma}(x_1 -1, y) - \widetilde{\sigma}(x_2,y) + \widetilde{\sigma}(x_2-1,y),
	\end{split}
	\end{equation*}
	where we define $\widetilde{\sigma}(0,y):= 0$. This property implies that
	\begin{equation}\label{diffcomp11}
	\begin{split}
	\mathbb{E} [v_{t+\Delta}(x) - v_t (x)\; | \; \mathcal{F}_t]
	=&
	\sum_{k=0}^{x-2} \frac{1}{2^{k+3}} \left\{ v_t(x+1) - v_t(x) - v_t(x-k) + v_t(x-k-1)  \right\}\\
	&~~~+ \frac{1}{2^{x+1}} \left\{v_t(x+1) - v_t(x) - v_t(1)   \right\}\\
	+&\sum_{k=0}^{n-x-2} \frac{1}{2^{k+3}} \left\{v_t(x+1+k) - v_t(x+k) - v_t(x) + v_t(x-1) \right\}\\
	&~~~+ \frac{1}{2^{n-x+1}} \left\{ - v_t(n-1) - v_t(x) + v_t(x-1) \right\}
	\end{split}
	\end{equation}
	Letting $\bar{v}_t(x) = \mathbb{E}[v_t(x)]$, taking expectations on both sides of (\ref{diffcomp11}) and rearranging the terms in the r.h.s., we have that for each $2 \leq x \leq n-2$, 
	\begin{equation}\label{vbareq1}
	\begin{split}
	\bar{v}_{t+\Delta}(x)
	=
	\sum_{k=-1}^{n-x-2} \frac{\bar{v}_t(x+k)}{2^{k+3}}
	~ +~\sum_{k=-1}^{x-2} 
	\frac{\bar{v}_t (x-k)}{2^{k+3}}.
	\end{split}
	\end{equation}
	Similar calculations for $x=1$ and $x=n-1$ yield that
	\begin{equation}\label{vbareq2}
	\begin{split}
	\bar{v}_{t+\Delta}(1)
	&=~
	\frac{1}{8}\bar{v}_t(1) + \frac{1}{4} \bar{v}_t(2)
	+\sum_{k=2}^{n-2} \frac{\bar{v}_t(k) }{2^{k+2}};\\
	\bar{v}_{t+\Delta}(n-1)
	&=~
	\frac{1}{8}\bar{v}_t(n-1) + \frac{1}{4} \bar{v}_t(n-2)
	+\sum_{k=2}^{n-2} \frac{\bar{v}_t(n-k) }{2^{k+2}}.
	\end{split}
	\end{equation}
	
	Due to the monotonicity of $\widetilde{\sigma}(x,y)$ in terms of $\sigma$, it suffices to prove the desired inequality (\ref{eqdecay}) for the initial condition $\sigma_0=\id$ which is the maximal case. The minimal case with initial state $\sigma^-(z)=n+1-z$ is also included in the maximal case; the only differences are the sign and taking $\widetilde{\sigma}^-(\cdot,n-y)$ instead of $\widetilde{\sigma}(\cdot, y)$. 
	
	Thus, let us assume  that  $\sigma_0= \id$.
	In order to establish the main inequality (\ref{eqdecay}), we will introduce $u_s : [n] \rightarrow \mathbb{R}$ which satisfies $u_s(x)\geq \bar{v}_{\Delta s}(x)$ and the bound
	\begin{equation*}
	|| u_s||_\infty \leq n(1+O(sn^{-4})) \exp \left( -(1-\delta)\frac{\pi^2}{n^2}s \right).
	\end{equation*}
	
	Let $u_0 (x):= \bar{v}_0 (x)$, $u_s(0)=u_s(n)=0$ and define $u_{s+1}(x)$ to follow (\ref{vbareq1}) so that
	\begin{equation}\label{ueq1}
	\begin{split}
	u_{s+1}(x)
	=~ 
	\sum_{k=-1}^{n-x-1} \frac{u_s(x+k)}{2^{k+3}}
	~ +~\sum_{k=-1}^{x-1} 
	\frac{u_s (x-k)}{2^{k+3}},
	\end{split}
	\end{equation}
	for each $x\in [n]$ and $j \in \mathbb{N}$. Note the difference between $\bar{v}_s $ and $u_s$ as  $\bar{v}_{s+1}(x)$  satisfies (\ref{vbareq1}) only for $2\leq x \leq n-2$.
	Since $u_0=\bar{v}_0$ is positive and the coefficients in (\ref{ueq1}) are at least as large as those in (\ref{vbareq1}) and (\ref{vbareq2}), we have $u_s \geq \bar{v}_s$ for all $s$. 
	
	Furthermore, we define $d_s : [n] \rightarrow \mathbb{R} $ by
	\begin{equation}\label{ddef}
	d_s (x) := u_s(x)-u_s(x-1),
	\end{equation} 
	We analyze $d_s$ instead of $u_s$ since it has a  tractable initial condition. Indeed, note that $||d_0||_\infty \leq 1$, which is much smaller compared to $||u_0||_\infty \asymp n.$ Also, note the obvious inequality that 
	\begin{equation}\label{ineqlinfl1}
	||u_s||_\infty \leq ||d_s||_1.
	\end{equation}
	Based on (\ref{ueq1}), we compute the transition rule of $d_s$ as follows:
	\begin{equation}\label{deq1}
	\begin{split}
	d_{s+1}(x) 
	=
	\sum_{k=-1}^{n-x} \frac{d_s(x+k)}{2^{k+3}} 
	~+~
	\sum_{k=-1}^{x-1} \frac{d_s(x-k)}{2^{k+3}} .
	\end{split}
	\end{equation}
	Therefore one can observe that the equation (\ref{deq1}) is equivalent to the transition rule of the random walk on $\mathbb{Z}$ that has i.i.d. increments $X_j \sim X$ with
	\begin{equation*}
	\mathbb{P}(X=k)= \frac{1}{2^{|k|+3}} + \frac{1}{2^{-|k|+3}} \textbf{1}_{\{|k|\leq 1 \}},
	\end{equation*}
	and that dies out when reaching outside of $[n] $. 
	
	Let $ S_m^x := x+ \sum_{j=1}^m X_j$ be the symmetric random walk on $\mathbb{Z}$ with i.i.d. $X_j \sim X$ that starts at $x$, and let $ \widehat{\tau}_n^x: = \min \{m\geq 0: S_m^x \notin [n] \}$ be its first exit time from $[n]$. For each $l\in [n]$, let $d_s^{(l)}$ denote the vector such that $d_0^{(l)} = \textbf{1}_{\{l \}}$  and follows the transition rule (\ref{deq1}). Then we have
	\begin{equation}\label{sumofu}
	 \left\| d_s^{(l)} \right\|_1 =\sum_{x=1}^{n} d_s^{(l)} (x) ~=~  \mathbb{P}(\widehat{\tau}_n^l >s) 
	.
	\end{equation}
	Thus, our goal is to bound
	the probability $\mathbb{P}(\widehat{\tau}_n^l >s)$, which can be done similarly as Lemmas \ref{couptimelem} and  \ref{wilsoncouptimelem}. Since Var$(X)=2$, Donsker's theorem implies that for any $\delta>0$, there exists $N_\delta$ such that 
	\begin{equation*}
	\mathbb{P} (\widehat{\tau}_n^{zn} > \theta n^2 ) 
	~\leq~ \mathbb{P} (\widehat{\tau}_B^{z/\sqrt{2}} > \theta-\delta )
	\end{equation*}
	for all $n \geq N_\delta$, where $\widehat{\tau}_B^z$ is the first exit time from $[0, 1/\sqrt{2}]$ of the standard Brownian motion with initial position $z$. Notice that we already have computed the probability in the r.h.s. in Lemma \ref{wilsoncouptimelem} and Corollary \ref{couptimecor}. According to these results, we obtain that for any constant $\theta>0$,
	\begin{equation*}
	\mathbb{P}(\widehat{\tau}_B^z > \theta) \leq C\exp(-\pi^2 \theta),
	\end{equation*}
	for some absolute constant $C>0$. (Although Lemma \ref{wilsoncouptimelem} is proven for the walk that starts at the midpoint of the given interval, generalization to the arbitrary starting location is straightforward.) Repeating the argument done in Corollary \ref{couptimecor} and Lemma \ref{couptimelem}, we deduce the following: For any $\delta>0$, there exist $\theta_\delta>0$ and $N_\delta >0$ such that for all $\theta \geq \theta_\delta$, $n\geq N_\delta$ and $z\in [n]$,
	\begin{equation*}
	\mathbb{P}(\widehat{\tau}_n^z > \theta n^2) \leq (1+O(\theta n^{-2}))\exp(-(1-\delta)\pi^2 \theta).
	\end{equation*}
	
	\noindent The original vector $d_s$ can be written as
	\begin{equation*}
	d_s = \sum_{l=1}^n d_0(l)\cdot d_s^{(l)}.
	\end{equation*}
	Since $||d_0||_\infty \leq 1$, we have 
	\begin{equation*}
	\left\| d_s \right\|_1 \leq \sum_{l=1}^n |d_0(l)| 
	\left\| d_s^{(l)} \right\|_1 \leq n (1+O(s n^{-4})) \exp \left(-(1-\delta) \frac{\pi^2}{n^2} s \right),
	\end{equation*}
	for all large $n>N_\delta$ and $ \theta_\delta n^2 < s \leq n^3$. Therefore, we deduce the desired result by (\ref{ineqlinfl1}).
\end{proof}

\subsection{Proof of Lemma \ref{wilsoncouptimelem}}\label{subsecwilson}

\noindent\textbf{Lemma \ref{wilsoncouptimelem}.}\textit{
	Let $\bar{\tau}^n$ be the first time that the simple random walk on $\mathbb{Z}$ starting at the origin hits $\pm n$. There exists a constant $C>0$ that satisfies $\mathbb{P} (\bar{\tau}^n > \theta n^2) < C(1+ O(\theta n^{-2})) \exp (-\pi^2 \theta/8)$ for all $\theta >0$ ($\theta$ may depend on $n$). 
}

\begin{remark}
	\textnormal{Lemma \ref{wilsoncouptimelem} is originally stated in terms of the hitting time at $\pm n / \sqrt{2}$. Here we presented an equivalent statement regarding the hitting time at $\pm n$.}
\end{remark}

\begin{proof}
	Let $\widetilde{S}_m$ denote the simple random walk on $\mathbb{Z}$ that starts at the origin. By the definition of $\bar{\tau}^n$, it suffices to show the desired inequality for $\tau^n_+$, where $\tau^n_+$ is the first time that $S^+_m := |\widetilde{S}_m| $ hits $n$. 
	
	Let $(Z_m)$ be the random walk on $\{0,1, \dots , n-1\}$ that has the same jump rate as $S^+_m$ on $\{ 0,1,\ldots, n-2\}$, and that at $(n-1)$ jumps to $(n-2)$ with probability $1/2$ and stays fixed otherwise. Then, one can notice that $\mathbb{P}(\tau^n_+ > \theta n^2)$ is equal to the survival probability of $Z_{\theta n^2}$. We focus on computing the latter quantity.
	
	Denote the transition matrix of $(Z_m)$ by $M_n$ and note that the matrix $M_n$ is symmetric. For each $j=0,1,\ldots,n-1$, let $f_j$ be the $n$-dimensional vector defined by 
	\begin{equation*}
	f_j (x) = \cos \left(\frac{(2j+1)\pi x}{2n} \right), ~~~\textnormal{for all } x\in \{0,1,\ldots,n-1\}.
	\end{equation*} 
	Observe that $f_j$'s are the eigenvectors of $M_n$, particularly since $\cos(\frac{(2j+1)\pi x}{2n} ) $ becomes zero at $x=n$. The corresponding eigenvalues are given by 
	\begin{equation*}
	\lambda_j = \cos \left(\frac{(2j+1)\pi}{2n} \right), ~~~\textnormal{for all } j\in \{0,1,\ldots, n-1 \}.
	\end{equation*}
	It is also straightforward to check that $f_j$'s are orthogonal:
	\begin{equation*}
	\sum_{x=0}^{n-1} f_j(x)f_k(x) = \frac{1}{2} \sum_{x=0}^{n-1} \cos \left(\frac{(j+k+1)\pi x}{n} \right) + \cos \left(\frac{(j-k)\pi x}{n} \right),
	\end{equation*}
	and the r.h.s. is nonzero if and only if $j=k$. Therefore, $\{f_j\}$ forms an orthogonal basis of the space of $n$-dimensional vectors. Let $\delta_0$ be the point mass at the origin. Elementary calculation yields that 
	\begin{equation}\label{tauplusineq}
	\begin{split}
	\mathbb{P}(\tau^n_+ > t) &= \sum_{x=0}^{n-1} M_n^t \delta_0 (x)  
	=\sum_{x=0}^{n-1} \sum_{j=0}^{n-1} \frac{\delta_0 \cdot f_j}{f_j \cdot f_j} \lambda_j^t f_j (x)\\
	&=\sum_{j=0}^{n-1} \frac{2}{n+1} \cos^t \left(\frac{(2j+1)\pi}{2n} \right) \sum_{x=0}^{n-1} \cos \left(\frac{(2j+1)\pi x}{2n} \right)
	~\leq~
	4\sum_{j=0}^{\lfloor n/2 \rfloor}   \cos^t \left(\frac{(2j+1)\pi}{2n} \right),
	\end{split}
	\end{equation}
	where in the second line we used the identity $f_j \cdot f_j = \frac{n+1}{2}$. If we consider the line passing $(0, \cos 0)$ and $(\alpha, \cos \alpha )$ for $\alpha = \pi/2n$, it lies above $(z, \cos z )$ for $z \in [\pi/2n, \pi/2]$. Thus, we can bound $\lambda_j^t$ by
	\begin{equation*}
	\cos^t \left(\frac{(2j+1)\pi}{2n} \right) \leq \left\{1- (2j+1) \left(1- \cos\left(\frac{\pi}{2n}\right)\right) \right\}^t ~\leq~ \exp \left\{- t(2j+1) \left(1- \cos\left(\frac{\pi}{2n}\right)\right)  \right\}.
	\end{equation*}
	Hence, summation over $j=0,\ldots, n-1$ yields that
	\begin{equation*}
	\sum_{j=0}^{\lfloor n/2 \rfloor}   \cos^t \left(\frac{(2j+1)\pi}{2n} \right)
	\leq 
	\frac{\exp \left(-\left(1- \cos \left( \frac{\pi}{2n}\right) \right)t \right) }{ 1-\exp \left(-\left(1- \cos \left( \frac{\pi}{2n}\right) \right)t \right) }
	~\leq~
	\left(1+ O\left(\frac{t}{n^4} \right) \right) \frac{\exp(-\pi^2 t/ 8n^2) }{1-\exp(-\pi^2 t/ 8n^2)}.
	\end{equation*}
	Therefore, combining with (\ref{tauplusineq}), we obtain that
	\begin{equation*}
	\mathbb{P}(\tau_+^n > \theta n^2) \leq \min \left\{ 1, ~(4+ O(\theta n^{-2})) \frac{\exp(-\pi^2 \theta / 8)}{1- \exp(-\pi^2 \theta/8)} \right\} 
	\leq (5+ O(\theta n^{-2})) \exp \left( - \frac{\pi^2}{8}\theta \right),
	\end{equation*}
	which is the desired result with $C=5$.
\end{proof}

\subsection{Proof of Lemma \ref{1stmlem} }\label{subsec1stm}

\noindent \textbf{Lemma \ref{1stmlem}.} 	\textit{Let $\Phi_t$ defined as (\ref{Phidef}). For any $t \in \mathbb{N}$ we have}
\begin{equation*}
|\, \mathbb{E} [\Phi_{t+1} | \mathcal{F}_t] - (1-\gamma) \Phi_t\,| \leq  \frac{4\pi}{3n} ,
\end{equation*}
\textit{where $\gamma := \pi^2/n^2 - O(n^{-4}).$}

\begin{proof} 
	According to the computations in (\ref{vbareq1}, \ref{vbareq2}), we have
	\begin{equation}\label{1stmeq22}
	\begin{split}
	\mathbb{E} [\Phi_{t+1}|\mathcal{F}_t] &= \sum_{x=2}^{n-2} \left[\sum_{k=-1}^{n-x-2} \frac{ h_t (x+k)}{2^{k+3}} + \sum_{k=-1}^{x-2} \frac{h_t(x-k)}{2^{k+3}} \right] \,\sin \frac{\pi x}{n}\\
	&+ \left[ \left( \sum_{k=0}^{n-3} \frac{h_t(1+k)}{2^{k+3}} \right) + \frac{1}{4} h_t(2)\right] \sin \left(\frac{\pi}{n} \right)\\
	&+  \left[ \left( \sum_{k=0}^{n-3} \frac{h_t(n-1-k)}{2^{k+3}} \right) + \frac{1}{4} h_t(n-2)\right] \sin \left(\frac{\pi(n-1)}{n} \right).
	\end{split}
	\end{equation}
	By rearranging the r.h.s., we obtain that
	\begin{equation}\label{1stmeq2}
	\begin{split}
	\mathbb{E} [\Phi_{t+1}|\mathcal{F}_t] 
	&=\sum_{y=2}^{n-2} \left[\sum_{k=-1}^{n-y-1} \frac{h_t(y)}{2^{k+3}} \sin \left(\frac{\pi (y+k)}{n} \right) +
	\sum_{k=-1}^{y-1} \frac{h_t(y)}{2^{k+3}} \sin \left(\frac{\pi (y-k)}{n} \right) \right]\\
	&~~~~+ (h(1) + h(n-1)) \left(\frac{1}{4} \sin\left(\frac{2\pi}{n} \right) + \frac{1}{8} \sin\left(\frac{\pi}{n} \right) \right)\\
	&=\left[\sum_{k=-1}^\infty \frac{ \cos ( \frac{\pi k}{n})}{2^{k+2}} \right]\, \Phi_t
	~+~
	\sum_{y=2}^{n-2} h(y) \left\{\sum_{k=1}^\infty \frac{\sin (\frac{\pi k}{n})}{2^{n-y+3+k}} +  \sum_{k=1}^\infty \frac{\sin (\frac{\pi k}{n})}{2^{y+3+k}} \right\}\\
	&~~~~- 
	(h(1)+h(n-1)) \left[\sum_{k=1}^\infty \frac{3 \sin(\frac{\pi k}{n})}{2^{k+4}} \right].
	\end{split}
	\end{equation}
	Noting that 
	\begin{equation*}
	\sum_{k=-1}^\infty \frac{\cos(\frac{\pi k}{n})}{2^{k+3}} = 1-\frac{\pi^2}{n^2} + O\left(\frac{1}{n^4} \right) = 1-\gamma
	\end{equation*}
	as well as that $|h_t(x)| \leq \frac{1}{2} x \wedge (n-x) $, we can deduce from (\ref{1stmeq2}) that
	\begin{equation}\label{1stmeq4}
	\begin{split}
	|\, \mathbb{E}[\Phi_{t+1} | \mathcal{F}_t] - (1-\gamma)\Phi_t \,| 
	~\leq~ \sum_{y=2}^{\infty} y \left\{ \sum_{k=1}^\infty \frac{\pi}{n} \frac{k}{2^{y+3+k}} \right\} 
	\,+\,\sum_{k=1}^\infty \frac{3k}{2^{k+4}} \frac{\pi}{n}
	~=~
	\frac{3\pi}{4n}.
	\end{split}
	\end{equation}
\end{proof}


\bibliographystyle{plain}
\bibliography{cat}

\end{document}